\theoremstyle{plain}
\newtheorem{theorem}{Theorem}[section]
\newtheorem{proposition}[theorem]{Proposition}
\newtheorem{corollary}[theorem]{Corollary}
\newtheorem{lemma}[theorem]{Lemma}
\theoremstyle{definition}
\newtheorem{remark}[theorem]{Remark}
\newtheorem{question}[theorem]{Question}
\newtheorem{example}[theorem]{Example}
\newcommand{\abs}[1]{\lvert#1\rvert}
\newcommand{\norm}[1]{\lVert#1\rVert}
\newcommand{\bigabs}[1]{\bigl\lvert#1\bigr\rvert}
\newcommand{\term}[1]{{\textit{\textbf{#1}}}}   
\renewcommand{\mid}{\::\:}
\DeclareSymbolFont{bbold}{U}{bbold}{m}{n}
\DeclareSymbolFontAlphabet{\mathbbold}{bbold}
\def\one{\mathbbold{1}}
\DeclareMathOperator{\Range}{Range}
\DeclareMathOperator{\Span}{span}
\DeclareMathOperator{\FVL}{FVL}
\DeclareMathOperator{\FBL}{FBL}
\renewcommand{\le}{\leqslant}
\renewcommand{\ge}{\geqslant}
\newcommand{\goesu}{\xrightarrow{\mathrm{ru}}} 
\newcommand{\goesur}[1]{\xrightarrow{\mathrm{ru}({#1})}} 
\newcommand{\goesso}{\xrightarrow{\sigma\mathrm{o}}} 
\newcommand{\Xru}{X^{\mathrm{ru}}} 
\newcommand{\Yru}{Y^{\mathrm{ru}}}
\newcommand{\Tru}{T^{\mathrm{ru}}}
\newcommand{\Xruu}{\overline{X}^{\mathrm{ru},1}}
\newcommand{\ucl}[1]{\overline{#1}}
\newcommand{\uadh}[2]{\overline{#1}^{{#2}}}
\newcommand{\Icl}[2]{\overline{I_{#1}^{#2}}^{\norm{\cdot}_{#1}}}
\begin{document}

\title{Relative uniform completion of a vector lattice}

\author{Eugene Bilokopytov}
\email{bilokopy@ualberta.ca}

\address{Department of Mathematical and Statistical Sciences,
  University of Alberta, Edmonton, AB, T6G\,2G1, Canada.}

\address{Department of Mathematics, Toronto Metropolitan University, 350 Victoria Street, Toronto, ON, M5B2K3, Canada.}

\author{Vladimir G. Troitsky}
\email{troitsky@ualberta.ca}

\address{Department of Mathematical and Statistical Sciences,
  University of Alberta, Edmonton, AB, T6G\,2G1, Canada.}

\keywords{vector lattice, relative uniform completion}
\subjclass[2020]{Primary: 46A40}


\date{\today}

\begin{abstract}
  In the paper, we revisit several approaches to the concept of
  uniform completion $\Xru$ of a vector lattice~$X$. We show that many
  of these approaches yield the same result. In particular, if $X$ is a
  sublattice of a uniformly complete vector lattice $Z$ then $\Xru$
  may be viewed as the intersection of all uniformly complete
  sublattices of $Z$ containing~$X$. $\Xru$ may also be constructed
  via a transfinite process of taking uniform adherences in $Z$ with
  regulators coming from the previous adherences. If, in addition, $X$
  is majorizing in $Z$ then $\Xru$ may be viewed as the uniform
  closure of $X$ in~$Z$. We show that $\Xru$ may also be characterized
  via a universal property: every positive operator from $X$ to a
  uniformly complete vector lattice extends uniquely
  to~$\Xru$. Moreover, the class of positive operators here may be
  replaced with several other important classes of operators (e.g.,
  lattice homomorphisms). We also discuss conditions when the uniform
  adherence of a sublattice equals its uniform closure, and present an
  example (based on a construction by R.N.~Ball and A.W.~Hager) where
  this fails.
\end{abstract}

\maketitle

\section{Introduction.}

This paper is intended as a survey on approaches to (relative)
uniform completion of vector lattices. In modern Analysis, there are
several common ways to define the concept of a
\emph{completion}. Let us outline a few. The following
descriptions are rather informal; we put in italic the terms that
allow multiple interpretations. Given a \emph{space} $X$ in a
\emph{category} of spaces, by the completion of $X$ in the category,
one may mean
\begin{itemize}
\item The intersection of all \emph{complete} spaces $Y$ in the category,
  containing~$X$. One often assumes, in addition, that $Y$ is
  contained in some ``ambient'' complete space, which may be
  arbitrary, or may be canonical in some sense.
\item The \emph{closure} of $X$ in a complete space~$Y$. Again, here
  $Y$ could be arbitrary or canonical.
\item The \emph{least} complete space $Y$ that contains $X$ as a
  \emph{subspace}.
\item A complete space $Z$ that satisfies a \emph{universal property}:
  for every complete space~$Y$, every \emph{morphism} $T\colon X\to Y$
  admits a unique extension to a morphism $\widehat{T}\colon Z\to Y$.
\end{itemize}
In ``nice'' categories, these approaches yield the same
completion. For example, this is the case for the norm completion of a
normed space~$X$. One can define the completion of $X$ as the closure
of $X$ in $X^{**}$ or, alternatively, in any Banach space $Y$
containing~$X$. Here $X^{**}$ plays the role of a ``canonical''
ambient space. Equivalently, one can define the norm completion of $X$
as the intersection of all closed (i.e., complete) subspaces of
$X^{**}$ (or of any fixed Banach space $Y$ containing $X$) that
contain~$X$. Clearly, every Banach space that contains $X$ as a
subspace also contains the completion of~$X$, up to an isometry.
The completion satisfies the universal property for
continuous operators and for isometries;
moreover, the completion is characterized by these universal properties.

All the approaches listed above have been used in the literature to
define uniform completions of vector lattices, see, e.g.,
\cite{Veksler:69,Aliprantis:84,Triki:02,Buskes:16,Emelyanov:23,Emelyanov:24}
and it has often be presumed that the results are the same. In fact,
they are not always the same, and even when they are, this is not
always obvious. The main cause of trouble here is the fact that
uniform convergence in a sublattice is, generally, different from
uniform convergence in the entire space. The following examples
illustrate how things may go ``wrong''.

\begin{example}\label{ex:ambient}
  Consider the sequence $x_n=\frac1n e_n$ in~$\ell_\infty$. It is easy
  to see that $(x_n)$ is uniformly null in~$\ell_\infty$. However, the
  same sequence fails to be uniformly null when
  considered as a sequence in~$\ell_1$, where $\ell_1$ is viewed as a
  sublattice of~$\ell_\infty$, as the sequence is not even order
  bounded in~$\ell_1$.
\end{example}

\begin{example}\label{ex:c00}
  Let $X=c_{00}$, the space of all sequences of real numbers with only
  finitely many non-zero entries. It is easy to see that $c_{00}$ is
  uniformly complete. However, viewed as a sublattice of~$c_0$, $X$ is not
  uniformly closed. Its uniform closure in $c_0$ is all of~$c_0$.
\end{example}

Example~\ref{ex:c00} shows that, in general, one cannot define the
uniform completion of $X$ to be the uniform closure of $X$ in an
arbitrary uniformly complete vector lattice that contains $X$ as a
sublattice. We will show in this paper that, nevertheless, many other
``natural'' definitions of a uniform completion make sense and agree.

\medskip

Finally, we would like to mention that the concept of uniform
completion has been studied in the setting of lattice-ordered groups;
see, e.g.,~\cite{Ball:99,Cernak:09,Hager:15}. A somewhat different
approach to uniform completions was undertaken in~\cite{BallHager4}.

\section{Notation and preliminaries.}

We refer the reader to~\cite{Aliprantis:06} for background information
on vector lattices.  All vector lattices in this paper are assumed to
be Archimedean.  Given a net $(x_\alpha)$ in a  vector
lattice $X$ and vectors $x\in X$ and $e\in X_+$, we say that
$(x_\alpha)$ converges to $x$ \term{uniformly relative} to $e$ if for
every $\varepsilon>0$ there exists an index $\alpha_0$ such that
$\abs{x_\alpha-x}\le\varepsilon e$ for all
$\alpha\ge\alpha_0$. Equivalently, $\norm{x_\alpha-x}_e\to 0$, where
\begin{displaymath}
  \norm{z}_e\coloneqq
  \inf\bigl\{\lambda\in\mathbb R_+ \mid \abs{z}\le\lambda u\bigr\}
\end{displaymath}
is a norm on the principal ideal $I_e$ (we take $\norm{z}_e=\infty$ if
$z\notin I_e$).  We write $x_\alpha\goesur{e}x$.  We say that $(x_n)$
converges to $x$ \term{relatively uniformly} or just \term{uniformly}
and write $x_\alpha\goesu x$ if $x_\alpha\goesur{e}x$ for some
$e\in X_+$; we then say that $e$ is a \term{regulator} of the
convergence.

Uniform convergence is, generally, not given by a topology. We refer
the reader to Section~5 in~\cite{BCTW} for an overview of properties of uniform
convergence. Here we collect a few facts that will be most important
for our exposition.

If $x_\alpha\goesur{u}x$ then $x_\alpha\goesur{v}x$ for every
$v\ge u$.  Uniform convergence may often be reduced to sequences in
the following way: if $x_\alpha\goesur{u}x$ then there exists an
increasing sequence $(\alpha_n)$ of indices such that
$\abs{x_{\alpha_n}-x}\le\frac1n u$ for all~$n$; in particular,
$x_{\alpha_n}\goesur{u}x$. Let $A$ be a subset of~$X$. $A$ is
\term{uniformly closed} in $X$ if it contains the limits of all
uniformly convergent nets (or sequences) in~$A$. It is easy to see
that the intersection of any collection of uniformly closed sets is
again uniformly closed. The \term{uniform closure} of $A$ is the
intersection of all uniformly closed subsets of $X$ containing~$A$; we
denote it by~$\ucl{A}$. This is, clearly, the least uniformly closed
subset of $X$ containing~$A$. We denote by $\uadh{A}{1}$ the
\term{uniform adherence} of $A$ defined as the set of all $x\in X$ for
which there exists a net $(x_\alpha)$ in $A$ such that
$x_\alpha\goesu x$ in~$X$. Again, it suffices to consider
sequences. Note that $A\subseteq\uadh{A}{1}\subseteq\ucl{A}$;
furthermore, $A$ is uniformly closed iff $A=\ucl{A}$ iff
$A=\uadh{A}{1}$. Since uniform convergence is not topological, we do
not necessarily have $\uadh{A}{1}=\ucl{A}$; see
Example~\ref{ex:adh-non-cl} below.

One can define $\uadh{A}{\kappa}$ for every ordinal $\kappa$ using
transfinite induction: if $\kappa=\iota+1$ then we put
$\uadh{A}{\kappa}\coloneqq\uadh{\uadh{A}{\iota}}{1}$; if $\kappa$ is a
limit ordinal then we put
$\uadh{A}{\kappa}\coloneqq\bigcup_{\iota<\kappa}\uadh{A}{\iota}$. Note
that $\uadh{A}{\omega_1}=\ucl{A}$. Indeed, suppose that
$x\in\uadh{\uadh{A}{\omega_1}}{1}$. Then $x_n\goesu x$ for some
sequence $(x_n)$ in $\uadh{A}{\omega_1}$. Since
$\uadh{A}{\omega_1}=\bigcup_{\kappa<\omega_1}\uadh{A}{\kappa}$, the
entire sequence $(x_n)$ is contained in $\uadh{A}{\kappa}$ for some
$\kappa<\omega_1$. Then
$x\in\uadh{A}{\kappa+1}\subseteq\uadh{A}{\omega_1}$.

It can also be easily verified that if $Y$ is a sublattice of $X$ then
$\uadh{Y}{1}$ and $\ucl{Y}$ are, likewise, sublattices; see, e.g.,
Proposition~3.13 in~\cite{BCTW}. It follows that $\ucl{Y}$ is the
intersection of all uniformly closed sublattices containing~$Y$.

We would like to emphasize that the concepts introduced above ``depend
on the ambient space''. For example, let $Y$ be a sublattice of $X$ and
$(x_\alpha)$ a net in~$Y$; if
$x_\alpha\goesu 0$ in $Y$ then $x_\alpha\goesu 0$ in $X$ (with the
same regulator), but the converse may be false (see Example~\ref{ex:ambient}).
Furthermore, if $A\subseteq Y$ then the uniform closure of $A$ in
$Y$ may be different from the uniform closure of $A$ in~$X$; take,
e.g., $X=c_0$ and $A=Y=c_{00}$ in Example~\ref{ex:c00}.

A net $(x_\alpha)_{\alpha\in \Lambda}$ in $X$ is \term{uniformly
  Cauchy} if the net $(x_\alpha-x_\beta)$, indexed by~$\Lambda^2$,
converges to zero uniformly.  We say that $X$ is \term{uniformly
  complete} if every uniformly Cauchy net is uniformly convergent.  It
is well known (see, e.g.,~\cite[Proposition~5.2]{BCTW}) that $X$ is
uniformly complete iff $\bigl(I_u,\norm{\cdot}_u\bigr)$ is complete as
a normed space for every $u\in X_+$; in this case, by Krein-Kakutani
Theorem, $\bigl(I_u,\norm{\cdot}_u\bigr)$ is lattice isometric to
$C(K)$ for some compact Hausdorff space~$K$. Every Banach lattice, as
well as every order (or Dedekind) complete vector lattice is uniformly
complete. We write $X^\delta$ for the order (or Dedekind) completion
of~$X$. Being order complete, $X^\delta$ is uniformly complete. The
space $c_{00}$ is order complete and uniformly complete because every
principal ideal in it is finite-dimensional.

\begin{example}\label{ex:adh-non-cl}
  Here is an example of a set $A$ for which
  $\uadh{A}{1}\ne\ucl{A}$.
  Let $X=c_{00}$ and
  \begin{displaymath}
    A=\Bigl\{\tfrac1ne_1+\tfrac1k\sum_{i=2}^ne_i\mid
  n,k\in\mathbb N\Bigr\}.
  \end{displaymath}
  Observe that $0\notin\uadh{A}{1}$. Indeed, otherwise, we can find
  sequences $(n_m)$ and $(k_m)$ in $\mathbb N$ and some $h\in c_{00}$
  such that
  \begin{math}
    \frac{1}{n_m}e_1+\frac{1}{k_m}\sum_{i=2}^{n_m}e_i\le\frac1m h
  \end{math}
  for all~$m$. We can write $h=(h_1,\dots,h_l,0,\dots)$ for some
  $l\in\mathbb N$. For every $m\in\mathbb N$ we have $n_m\le l$ and,
  therefore, $\frac1l e_1\le\frac1m h$, which is a contradiction.

  On the other hand, we have $\frac1n e_1\in\uadh{A}{1}$ for
  every~$n$; it follows that $0\in\uadh{A}{2}\subseteq\ucl{A}$.
\end{example}

\begin{example}\label{COmega-ucompl}
  The space $C(\Omega)$ is uniformly complete for every Hausdorff
  topological space~$\Omega$. Indeed, let $u\in C(\Omega)_+$.  Put
  $\Omega_0=\{u\ne 0\}$. It can be easily verified that $g\in I_u$ iff
  there exists $h\in C_b(\Omega_0)$ such that $g(t)=u(t)h(t)$ for
  every $t\in\Omega_0$ and $g$ vanishes on~$\Omega_0^C$. The map that
  sends $g$ to $h$ is a lattice isometry between
  $\bigl(I_u,\norm{\cdot}_u\bigr)$ and $C_b(\Omega_0)$. Being a Banach
  lattice, $C_b(\Omega_0)$ is complete, hence so is
  $\bigl(I_u,\norm{\cdot}_u\bigr)$.
\end{example}

\section{Uniform convergence and ambient space.}

We mentioned earlier that a net that converges uniformly in a
sublattice $Y$ of a vector lattice $X$ remains uniformly convergent
in~$X$. In this section, we consider a few situations when uniform
convergence passes down to a sublattice, that is, when every net in
$Y$ which converges uniformly in $X$ to some element of $Y$ also
converges uniformly in~$Y$. The limit would be the same (because
uniform limits in $X$ are unique), so we may assume WLOG that the
limit is zero. Throughout this section, $Y$ is a sublattice of a
vector lattice~$X$. The following proposition is straightforward, yet
will be heavily used throughout the paper. Recall that $Y$ is
\term{majorizing} in $X$ if for every $x\in X_+$ there exists $y\in Y$
such that $x\le y$.

\begin{proposition}\label{majorizing}
  If $Y$ is majorizing, then uniform convergence of $X$ passes down to~$Y$.
\end{proposition}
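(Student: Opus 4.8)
The plan is to reduce the regulator from $X$ to one that lives in $Y$. As already noted in the text, we may assume the common limit is zero. So let $(y_\alpha)$ be a net in $Y$ with $y_\alpha\goesu 0$ in $X$, and fix a regulator $e\in X_+$, so that $y_\alpha\goesur{e}0$ in~$X$. First I would invoke the majorizing hypothesis to produce $u\in Y$ with $u\ge e$; replacing $u$ by $u\vee 0$ (which still lies in $Y$ since $Y$ is a sublattice, and which still dominates $e$ because $u\vee 0\ge u\ge e$), I may assume $u\in Y_+$.

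Next, recall from the preliminaries that $y_\alpha\goesur{e}0$ together with $e\le u$ yields $y_\alpha\goesur{u}0$: indeed, given $\varepsilon>0$, there is $\alpha_0$ with $\abs{y_\alpha-0}\le\varepsilon e\le\varepsilon u$ for all $\alpha\ge\alpha_0$. The last step is to observe that this statement is ``internal'' to~$Y$. Since $Y$ is a sublattice of $X$, the modulus $\abs{y_\alpha}$ computed in $Y$ coincides with the one computed in $X$, and the order on $Y$ is the restriction of the order on~$X$; as $u\in Y_+$, the inequalities $\abs{y_\alpha}\le\varepsilon u$ are valid in $Y$ and exhibit $y_\alpha\goesur{u}0$ in~$Y$. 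Therefore $y_\alpha\goesu 0$ in $Y$, which is what we wanted.

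I do not anticipate a genuine obstacle here: the argument is essentially a one-line regulator swap. The only points requiring a modicum of care are (a) arranging that the new regulator actually lies in $Y_+$, handled by the $u\vee 0$ trick, and (b) the transfer of the lattice operation and the order between $Y$ and $X$, which is immediate from $Y$ being a sublattice. It is worth flagging explicitly that the converse implication (uniform convergence in $X$ forcing uniform convergence in $Y$ in general) is exactly what fails without the majorizing assumption, cf.\ Example~\ref{ex:ambient}, so the hypothesis is doing real work in producing a dominating element of~$Y$.
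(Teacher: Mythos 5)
Your argument is correct and is exactly the straightforward regulator swap the paper has in mind (the paper labels the proposition straightforward and omits the proof): majorizing supplies $u\in Y$ with $u\ge e$, and then $\abs{y_\alpha}\le\varepsilon e\le\varepsilon u$ with $u\in Y_+$ witnesses the convergence inside~$Y$. One tiny simplification: the $u\vee 0$ step is unnecessary, since $u\ge e\ge 0$ already forces $u\in Y_+$.
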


\begin{corollary}
  For a net $(x_\alpha)$ in $X$, $x_\alpha\goesu 0$ in $X$ iff
  $x_\alpha\goesu 0$ in~$X^\delta$.
\end{corollary}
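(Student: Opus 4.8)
The plan is to obtain the corollary as an immediate consequence of Proposition~\ref{majorizing}, combined with the standard fact that $X$ is order dense and majorizing in its Dedekind completion $X^\delta$ (see, e.g.,~\cite{Aliprantis:06}). In applying Proposition~\ref{majorizing} one should keep the roles straight: $X^\delta$ plays the part of the ambient vector lattice there, while $X$ itself plays the part of the majorizing sublattice.

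For the forward implication, suppose $x_\alpha\goesur{e}0$ in $X$ with $e\in X_+$. Since $X$ is a sublattice of $X^\delta$ and $e\in(X^\delta)_+$, each inequality $\abs{x_\alpha}\le\varepsilon e$ holding in $X$ also holds in $X^\delta$, so $x_\alpha\goesur{e}0$ in $X^\delta$; this is just the Section~2 observation that uniform convergence in a sublattice persists, with the same regulator, in the ambient lattice. For the converse, I would invoke Proposition~\ref{majorizing}: since $X$ is majorizing in $X^\delta$, uniform convergence of $X^\delta$ passes down to $X$. Hence a net $(x_\alpha)$ in $X$ with $x_\alpha\goesu 0$ in $X^\delta$ satisfies $x_\alpha\goesu 0$ in $X$, because the limit $0$ lies in $X$.

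The only non-formal ingredient here is the assertion that $X$ is majorizing in $X^\delta$, so that is the one step I would take care to cite correctly; once it is granted there is nothing further to do, and I expect no real obstacle. If a fully self-contained treatment were preferred, one would just rerun the short argument behind Proposition~\ref{majorizing} in this special case: given $x_\alpha\goesur{w}0$ in $X^\delta$ with $w\in(X^\delta)_+$, pick $u\in X$ with $w\le u$, replace $u$ by $u^+\in X_+$ (which still dominates $w$), and note that the inequalities $\abs{x_\alpha}\le\varepsilon u^+$ hold in $X^\delta$ among elements of $X$, hence hold in $X$, so that $x_\alpha\goesur{u^+}0$ in $X$.
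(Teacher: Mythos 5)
Your proposal is correct and is exactly the argument the paper intends: the corollary is stated as an immediate consequence of Proposition~\ref{majorizing}, using the standard fact that $X$ is (order dense and) majorizing in~$X^\delta$, together with the trivial observation that uniform convergence in the sublattice persists, with the same regulator, in the ambient lattice. Your self-contained rerun of the majorizing argument is also fine (and the passage to $u^+$ is even unnecessary, since any $u\in X$ dominating the regulator $w\ge 0$ is automatically positive).
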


The following result appeared in~\cite{Bilokopytov:23}. We provide a
proof for the convenience of the reader and to fix a minor gap in the
original proof.

\begin{proposition}\label{maj-o-dense}
  Let $Y$ be a majorizing sublattice in~$X$.
  \begin{enumerate}
  \item\label{maj-o-dense-adh} Every $0\le x\in\uadh{Y}{1}$ may be
    expressed as a supremum (in $X$) and a uniform limit of an increasing
    sequence in~$Y_+$.
  \item\label{maj-o-dense-clo} Every $0\le x\in\ucl{Y}$ may be
    expressed as a supremum (in $X$) of an increasing sequence in~$Y_+$.
  \end{enumerate}
\end{proposition}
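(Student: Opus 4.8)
The plan is to prove (i) by an explicit truncation construction and then bootstrap to (ii) by transfinite induction.

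For~(i): fix $0\le x\in\uadh{Y}{1}$. Pick a net in $Y$ converging uniformly to $x$ and, as recalled in Section~2, pass to a sequence $(y_n)$ in $Y$ with $\abs{y_n-x}\le\frac1n e$ for a single regulator $e\in X_+$. Since $Y$ is majorizing, choose $u\in Y$ with $u\ge e$; then $u\in Y_+$ and $\abs{y_n-x}\le\frac1n u$. Set $z_n\coloneqq\bigl(y_n-\frac1n u\bigr)^+$ and $w_n\coloneqq z_1\vee\dots\vee z_n$; these lie in $Y_+$ because $Y$ is a sublattice, and $(w_n)$ is increasing. From $y_n-\frac1n u\le x$ and $x\ge0$ one gets $z_n\le x$, hence $w_n\le x$; and from $w_n\ge z_n\ge y_n-\frac1n u\ge x-\frac2n u$ one gets $0\le x-w_n\le\frac2n u$, so $w_n\goesur{u}x$. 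Finally, if $v$ is any upper bound of $(w_n)$ then $x-v\le x-w_n\le\frac2n u$ for all $n$, so $x\le v$ by the Archimedean property; thus $x=\sup_n w_n$ in $X$.

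For~(ii): I would prove by transfinite induction on the ordinal $\kappa$ that every $0\le x\in\uadh{Y}{\kappa}$ is the supremum in $X$ of an increasing sequence in $Y_+$; since $\uadh{Y}{\omega_1}=\ucl{Y}$, this gives the claim, the base case $\kappa=1$ being part~(i). At a limit ordinal the approximating sequence already lies in some $\uadh{Y}{\iota}$ with $\iota<\kappa$, so the induction hypothesis applies directly. For a successor $\kappa=\iota+1$: the set $\uadh{Y}{\iota}$ is a sublattice (by induction, using that the uniform adherence of a sublattice is a sublattice) and contains $Y$, hence is majorizing in $X$; applying~(i) with $\uadh{Y}{\iota}$ in place of $Y$ yields an increasing sequence $(v_n)$ in $\bigl(\uadh{Y}{\iota}\bigr)_+$ with $x=\sup_n v_n$. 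By the induction hypothesis write $v_n=\sup_k y_{n,k}$ with $(y_{n,k})_k$ increasing in $Y_+$, and put $y_n\coloneqq\bigvee_{j\le n}y_{j,n}\in Y_+$. Then $(y_n)$ is increasing and, since $y_{j,n}\le v_j\le x$, also $y_n\le x$; the remaining task is to check $x=\sup_n y_n$.

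The step requiring care is exactly this last verification. Because $(y_{i,k})_k$ is increasing, for every $i$ and $n$ one has $y_{i,n}\le y_{\max(i,n)}$, so any upper bound $w$ of $(y_n)$ dominates every $y_{i,n}$; taking the supremum over $n$ gives $v_i\le w$, and then $x=\sup_i v_i\le w$, so $x=\sup_n y_n$ as desired. Apart from this diagonalization the argument is routine; the only other point worth flagging is the harmless replacement of the regulator $e\in X_+$ by a regulator $u\in Y_+$ in~(i), which is precisely where the majorizing hypothesis is used.
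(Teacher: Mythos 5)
Your proof is correct and follows essentially the same route as the paper: the truncation $\bigl(y_n-\tfrac1n u\bigr)^+$ with finite suprema for part~(i), and transfinite induction applying~(i) to the majorizing sublattice $\uadh{Y}{\iota}$ at successor stages for part~(ii). The only difference is bookkeeping: the paper reduces~(ii) to expressing $x$ as a supremum of a countable subset of $Y_+$ (taking unions of such sets at the successor step), whereas you keep increasing sequences throughout and supply the diagonalization $y_n=\bigvee_{j\le n}y_{j,n}$ explicitly, which is fine.
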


\begin{proof}
  \eqref{maj-o-dense-adh} Find $(y_n)$ in $Y$ such that
  $y_n\goesur{e}x$ for some regulator $e\in X_+$. Since $Y$ is
  majorizing, we may assume that $e\in Y$. Passing to a subsequence,
  we have $x-\frac1n e\le y_n\le x+\frac1n e$ for every~$n$. It
  follows that $x-\frac2n e\le y_n-\frac1n e\le x$ and, therefore,
  $x-\frac2n e\le \bigl(y_n-\frac1n e\bigr)^+\le x$ for
  every~$n$. Take $z_m=\bigvee_{n=1}^m\bigl(y_n-\frac1n
  e\bigr)^+$. Then $z_m\in Y_+$, $z_m\uparrow$, and
  \begin{math}
    0\le x-z_m\le\frac2m e
  \end{math}
  for every~$m$, hence the sequence $(z_m)$ satisfies the requirements.

  \eqref{maj-o-dense-clo} It suffices to show that every
  $0\le x\in\ucl{Y}$ may be expressed as a supremum (in $X$) of a
  countable subset of~$Y_+$. Since $\ucl{Y}=\uadh{Y}{\kappa}$ for a
  sufficiently large ordinal~$\kappa$, it is enough to prove the
  statement for $\uadh{Y}{\kappa}$ for every~$\kappa$. We use
  transfinite induction. The case $\kappa=1$ follows
  from~\eqref{maj-o-dense-adh}. Suppose that the statement is true for
  all $\iota<\kappa$; we will prove it for $\uadh{Y}{\kappa}$.  Let
  $x\in\uadh{Y}{\kappa}_+$. If $\kappa=\iota+1$ for some $\iota$ then,
  by~\eqref{maj-o-dense-adh}, we find a sequence $(y_m)$ in
  $\uadh{Y}{\iota}_+$ such that $x=\sup y_m$. By induction hypothesis,
  for every $m$ we find a countable set $A_m$ of $Y_+$ such that
  $y_m=\sup A_m$. It follows that $y=\sup\bigcup_{m=1}^\infty
  A_m$. Finally, if $\kappa$ is a limit ordinal then
  $x\in\uadh{Y}{\iota}$ for some $\iota<\kappa$, and the required set
  exists by the induction hypothesis.
\end{proof}

Recall that a sublattice $Y$ of a vector lattice $X$ is \term{super
  order dense} if for every $x\in X_+$ there exists a sequence $(y_n)$
in $Y_+$ such that $y_n\uparrow
x$. Proposition~\ref{maj-o-dense}\eqref{maj-o-dense-clo} essentially
says that $Y$ is super order dense in~$\ucl{Y}$.

\begin{proposition}\label{maj-cl-in-cl}
  Let $Y$ be a majorizing sublattice in~$X$; let $\ucl{Y}$ be the
  uniform closure of $Y$ in~$X$. The uniform closure of $Y$ in
  $\ucl{Y}$ is again~$\ucl{Y}$.
\end{proposition}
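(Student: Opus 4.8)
Write $W=\ucl{Y}$ for the uniform closure of $Y$ in~$X$, and let $S$ denote the uniform closure of $Y$ computed inside~$W$. One inclusion is free: $S\subseteq W$ because $W$ is itself a uniformly closed (in~$W$) subset of $W$ containing~$Y$. For the reverse inclusion I would argue that $S$ is in fact uniformly closed not merely in $W$ but in the ambient space~$X$; then, since $S\supseteq Y$ and $\ucl{Y}=W$ is by definition the \emph{least} uniformly closed subset of $X$ containing~$Y$, it follows that $W\subseteq S$, and hence $S=W$, as required. The three facts about $W$ that make this work are: $W$ is uniformly closed in~$X$; $Y\subseteq W$; and, since $Y$ is majorizing in $X$, so is~$W$ (indeed every $e\in X_+$ is dominated by some $v\in Y_+\subseteq W_+$).

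To verify that $S$ is uniformly closed in~$X$, take a net $(s_\alpha)$ in $S$ with $s_\alpha\goesur{e}s$ in $X$ for some $e\in X_+$ and some $s\in X$. Since $(s_\alpha)$ lies in $S\subseteq W$ and $W$ is uniformly closed in~$X$, we first get $s\in W$; thus the whole net, together with its limit, is contained in~$W$. Now use that $Y$ is majorizing in $X$ to choose $v\in Y_+\subseteq W_+$ with $v\ge e$; then $s_\alpha\goesur{v}s$ with $v\in W_+$, so this is a uniformly convergent net in~$W$. As $S$ is uniformly closed in~$W$, we conclude $s\in S$. Hence $S$ is uniformly closed in~$X$, which completes the argument.

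The only genuine point — and the single place where the hypothesis that $Y$ is majorizing is used — is the step that replaces the regulator $e\in X_+$ by a regulator $v$ lying in~$W$: this is exactly what lets a net in $W$ which converges uniformly in $X$ be recognized as converging uniformly in~$W$ (cf.\ Proposition~\ref{majorizing}, the analogous ``passing down'' phenomenon; and Example~\ref{ex:ambient}, which shows that without majorizing a uniformly convergent net can fail to remain uniformly convergent in a sublattice precisely because no suitable regulator is available). Everything else is routine unwinding of the definitions of uniform closure and of ``uniform convergence in the ambient space''. An alternative, somewhat longer route is a transfinite induction proving that $\uadh{Y}{\kappa}$ computed in $X$ is contained in the uniform closure of $Y$ in $W$ for every ordinal~$\kappa$, where the successor step again rests on upgrading a regulator from $X_+$ to one in $Y_+$; the direct argument above sidesteps the induction.
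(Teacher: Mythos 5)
Your proposal is correct and follows essentially the same route as the paper: show that the closure $S$ of $Y$ computed in $\ucl{Y}$ is in fact uniformly closed in the ambient space $X$ (the paper does this by citing Proposition~\ref{majorizing} for the majorizing sublattice $\ucl{Y}$, which is exactly your explicit regulator-upgrade from $e\in X_+$ to $v\in Y_+$) and then conclude by minimality of $\ucl{Y}$. The only difference is presentational: you spell out the minimality step and the passing-down argument that the paper compresses into ``It suffices to show'' and a citation.
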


\begin{proof}
  Let $Z$ be the uniform closure of $Y$ in~$\ucl{Y}$. It suffices to
  show that $Z$ is uniformly closed in $X$. Suppose that
  $y_\alpha\goesu x$ in $X$ for some net $(y_\alpha)$ in $Z$ and some
  $x\in X$. Then $x\in\ucl{Y}$ because $\ucl{Y}$ is uniformly closed
  in~$X$. Since $Y$ and, therefore, $\ucl{Y}$, are majorizing in~$X$,
  it follows from Proposition~\ref{majorizing} that $y_\alpha\goesu x$
  in~$\ucl{Y}$. Since $Z$ is uniformly closed in~$\ucl{Y}$, we
  conclude that $x\in Z$.
\end{proof}

The next few results do not require that the sublattice is majorizing.
The following lemma has been known; see, e.g., Lemma~1.12
in~\cite{Cernak:09}, Lemma~3.1 in \cite{Kalauch:19}, or Lemma~1 in
\cite{Emelyanov:24}.

\begin{lemma}\label{uo-Cauchy-amb}
  Suppose that $x_n\goesu 0$ in $X$ for some $(x_n)$
  in~$Y$. If $(x_n)$ is uniformly Cauchy in $Y$ then
  $x_n\goesu 0$ in~$Y$.
\end{lemma}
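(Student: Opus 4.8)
The plan is to show that the very regulator witnessing the uniform Cauchy condition in $Y$ also serves as a regulator for the convergence $x_n\goesu 0$ in~$Y$. So let $u\in Y_+$ be such that $(x_n)$ is uniformly Cauchy with respect to~$u$ in~$Y$, and let $e\in X_+$ be a regulator for $x_n\goesu 0$ in~$X$ (recall that uniform convergence, by definition, comes with such a regulator, which we may take in~$X$). I claim that $x_n\goesur{u}0$ in~$Y$.

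The argument proceeds by a standard ``pass to the limit under an inequality'' manoeuvre, made legitimate by the Archimedean hypothesis. Fix $\varepsilon>0$. By the uniform Cauchy condition, choose $N$ so that $\abs{x_n-x_m}\le\varepsilon u$ for all $n,m\ge N$; in particular, for fixed $n\ge N$ and every $m\ge N$ we have $x_m-\varepsilon u\le x_n\le x_m+\varepsilon u$. Since $x_m\goesur{e}0$ in~$X$, for every $\delta>0$ there is $m\ge N$ with $-\delta e\le x_m\le\delta e$, and combining the two displays yields $-\delta e-\varepsilon u\le x_n\le\delta e+\varepsilon u$, hence $\bigl(\abs{x_n}-\varepsilon u\bigr)^+\le\delta e$. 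As this holds for every $\delta>0$ and $X$ is Archimedean, we get $\abs{x_n}\le\varepsilon u$. Since $\varepsilon>0$ was arbitrary and $n\ge N$ was arbitrary, this is exactly the statement that $x_n\goesur{u}0$, and $u\in Y_+$, so the convergence takes place in~$Y$.

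I do not expect a genuine obstacle here; the only point requiring care is that uniform convergence is not topological, so one cannot simply ``take limits'' in the inequality $x_m-\varepsilon u\le x_n\le x_m+\varepsilon u$ directly — instead one first converts it into a family of inequalities indexed by $\delta>0$ and then invokes the Archimedean property to kill the $\delta e$ term. It is also worth noting explicitly that the sublattice $Y$ need not be majorizing for this argument: the Cauchy regulator $u$ already lives in~$Y$, which is what makes the final conclusion internal to~$Y$.
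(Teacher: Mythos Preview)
Your proof is correct and follows essentially the same approach as the paper's: both arguments fix the Cauchy regulator $u\in Y_+$ and pass to the limit in the inequality $\abs{x_n-x_m}\le\varepsilon u$ using the Archimedean property, with the paper phrasing this step as ``order intervals in $X$ are uniformly closed'' rather than carrying out the explicit $\delta$-argument you wrote. Your remark that $Y$ need not be majorizing is apt and matches the paper's use of the lemma.
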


\begin{proof}
  By assumption, there exists $v\in Y_+$ such that for every
  $\varepsilon>0$ there exists $m_0$ such that for all $n\ge m\ge
  m_0$, we have
  \begin{equation}\label{eq:uu-seq-sublat}
    x_m-x_n\in[-\varepsilon v,\varepsilon v]_Y\subseteq
    [-\varepsilon v,\varepsilon v]_X.
  \end{equation}
  By the Archimedean property, order intervals in $X$ are uniformly
  closed. Since $x_n\goesu 0$ in~$X$, passing to the
  limit on $n$ in~\eqref{eq:uu-seq-sublat} yields
  $x_m\in[-\varepsilon v,\varepsilon v]_X$, hence
  $x_m\in[-\varepsilon v,\varepsilon v]_Y$. We conclude that
  $x_n\goesu 0$ in~$Y$.
\end{proof}

The following two results extend Proposition~1.12 in~\cite{Taylor:20}.

\begin{proposition}\label{uconv-cl}
  Suppose that $X$ is uniformly complete and let $(x_k)$ be a sequence
  in~$Y$. If $x_k\goesu x$ in $X$ then $x_k\goesu x$ in~$\uadh{Y}{1}$.
\end{proposition}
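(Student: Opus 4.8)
The plan is to manufacture, from the regulator $u\in X_+$ witnessing $x_k\goesur{u}x$ in $X$, a single regulator $e$ lying in $\uadh{Y}{1}$ that works for the \emph{whole} sequence. Since $x_k\goesur{u}x$, the sequence is uniformly Cauchy in $X$ with regulator $u$, so I would first choose indices $k_1<k_2<\cdots$ with $\abs{x_p-x_q}\le 4^{-n}u$ whenever $p,q\ge k_n$. The subsequence $(x_{k_n})$ is the easy part: telescoping gives $\abs{x_{k_n}-x}\le\sum_{m\ge n}\abs{x_{k_m}-x_{k_{m+1}}}$, so a regulator built from the increments $\abs{x_{k_m}-x_{k_{m+1}}}\in Y$ already controls it. The awkward terms are the $x_k$ with $k$ strictly between two consecutive $k_n$; but there are only finitely many of these in each gap, so the finite supremum $b_n\coloneqq\bigvee_{j=k_n}^{k_{n+1}}\abs{x_j-x_{k_n}}$ exists in $X$, lies in $Y_+$ (being a finite lattice expression in elements of the sublattice $Y$), and satisfies $b_n\le 4^{-n}u$.

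Next I would put $c_n\coloneqq\abs{x_{k_n}-x_{k_{n+1}}}+b_n\in Y_+$ and define $e\coloneqq\sum_{n=1}^\infty 2^n c_n$. Because $2^n c_n\le 2^{1-n}u$, the partial sums $\sum_{n=1}^N 2^n c_n$ are uniformly Cauchy in $X$ with regulator $u$; as $X$ is uniformly complete the series converges to some $e\in X_+$, and since the partial sums lie in $Y$, the limit $e$ lies in $\uadh{Y}{1}$ by the definition of the uniform adherence. Two estimates drop out: $2^m c_m\le e$ for every $m$, hence $b_m\le c_m\le 2^{-m}e$; and, summing $c_m=2^{-m}(2^m c_m)$ over $m\ge n$ and comparing with the partial sums of $e$, one gets $\sum_{m=n}^{N}c_m\le 2^{-n}e$ for all $N$.

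Finally I would check $x_k\goesur{e}x$. For the subsequence, telescoping together with the fact that order intervals in $X$ are uniformly closed (Archimedean property) gives $\abs{x_{k_n}-x}=\lim_N\abs{x_{k_n}-x_{k_N}}\le\sup_N\sum_{m=n}^{N-1}c_m\le 2^{-n}e$. For an arbitrary $k\ge k_n$, let $m$ be the largest index with $k_m\le k$; then $m\ge n$ and $k_m\le k\le k_{m+1}$, so $\abs{x_k-x_{k_m}}\le b_m\le 2^{-m}e$, whence $\abs{x_k-x}\le\abs{x_k-x_{k_m}}+\abs{x_{k_m}-x}\le 2^{1-m}e\le 2^{1-n}e$. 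Thus, given $\varepsilon>0$, pick $n$ with $2^{1-n}\le\varepsilon$; then $\abs{x_k-x}\le\varepsilon e$ for all $k\ge k_n$, and since $e\in\uadh{Y}{1}$ this is exactly $x_k\goesu x$ in $\uadh{Y}{1}$.

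The one non-routine step — and the real obstacle — is handling the terms $x_k$ that do not sit on the chosen subsequence: the only bound on them that comes for free is in terms of $u$, which need not belong to $\uadh{Y}{1}$. The device that makes the argument work is that the block of indices between consecutive $k_n$ is finite, so all of those deviations can be swept into a single element $b_n\in Y$ and folded into the same geometric-series regulator $e$; everything else is bookkeeping with geometric series and with the fact that $Y$ is a sublattice.
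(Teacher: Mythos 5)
Your argument is correct and is essentially the paper's proof: both pass to blocks of indices, take finite block suprema of the small deviations, and use uniform completeness of $X$ to sum a weighted geometric-type series whose sum serves as a regulator lying in $\uadh{Y}{1}$, after which the convergence estimate is routine. The only cosmetic difference is that you build the regulator from differences of elements of $Y$ (so its partial sums lie in $Y$ and the limit is in $\uadh{Y}{1}$ directly by definition, with the order estimates justified by uniform closedness of order intervals), whereas the paper sums the block suprema $\bigvee_{k=k_n}^{k_{n+1}-1}\abs{x_k-x}$ in the Banach lattice $\bigl(I_e,\norm{\cdot}_e\bigr)$ and locates the sum in the $\norm{\cdot}_e$-closure of $Y\cap I_e$, which it observes is contained in $\uadh{Y}{1}$.
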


\begin{proof}
  Clearly, $x\in\uadh{Y}{1}$. Let $e\in X_+$ such that
  $x_k\goesur{e}x$. Without loss of generality, we may assume that
  $x\in I_e$; otherwise, replace $e$ with $e\vee\abs{x}$. Passing to a
  tail, we may also assume that $(x_k)$ is in~$I_e$. Note that $I_e$
  is a Banach lattice under~$\norm{\cdot}_e$. Let $Z$ be the
  norm closure of $Y\cap I_e$ in
  $\bigl(I_e,\norm{\cdot}_e\bigr)$. Observe that $Z$ is a closed
  sublattice of $\bigl(I_e,\norm{\cdot}_e\bigr)$, $Z\subseteq
  \uadh{Y}{1}$, and $x\in Z$.

  For every $n$ there exists $k_n$ such that
  $\abs{x_k-x}\le \frac{1}{n^3}e$ for all $k\ge k_n$. WLOG,
  $k_n<k_{n+1}$ for every~$n$. Put
  \begin{math}
    v_n=\bigvee_{k=k_n}^{k_{n+1}-1}\abs{x_k-x}.
  \end{math}
  Then $v_n\le\frac{1}{n^3}e$ and, therefore,
  $\norm{v_n}_e\le\frac{1}{n^3}$. It follows that the series
  $w\coloneqq\sum_{n=1}^\infty nv_n$ converges in
  $\bigl(I_e,\norm{\cdot}_e\bigr)$, so that $w$ is in~$Z$, hence
  in~$\uadh{Y}{1}$.  We claim that $x_k\goesur{w}x$. Let
  $n\in\mathbb N$. Take any $k\ge k_n$. Find $m\ge n$ such that
  $k_m\le k< k_{m+1}$. Then
  $\abs{x_k-x}\le v_m\le\frac{1}{m}w\le\frac{1}{n}w$.
\end{proof}

\begin{corollary}\label{uconv-stable}
  Uniform convergence of sequences passes down from a uniformly complete vector
  lattice to a uniformly closed sublattice.
\end{corollary}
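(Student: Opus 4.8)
The plan is to read this off directly from Proposition~\ref{uconv-cl}. Suppose $X$ is uniformly complete, $Y$ is a uniformly closed sublattice of~$X$, and $(x_k)$ is a sequence in $Y$ with $x_k\goesu x$ in~$X$ for some $x\in X$; I must show $x_k\goesu x$ in~$Y$. First I would invoke the elementary fact recorded in Section~2 that a subset is uniformly closed precisely when it coincides with its uniform adherence, so that $\uadh{Y}{1}=Y$. In particular the uniform limit $x$ — which a priori lies in $\uadh{Y}{1}$ — belongs to~$Y$. Now Proposition~\ref{uconv-cl} applies verbatim: its hypotheses are exactly that $X$ be uniformly complete and that $(x_k)$ be a sequence in the sublattice, both of which hold. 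It yields $x_k\goesu x$ in $\uadh{Y}{1}$, i.e.\ in~$Y$, which is the assertion.

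There is essentially no obstacle here: the corollary is Proposition~\ref{uconv-cl} specialized to the case of a sublattice that is already uniformly closed, together with the observation that the limit is automatically trapped in~$Y$. If one preferred a self-contained argument, one could instead reproduce the proof of Proposition~\ref{uconv-cl}: choosing a regulator $e\in X_+$ with $x_k\goesur{e}x$, replacing $e$ by $e\vee\abs{x}$ and passing to a tail so that everything sits in~$I_e$, one works inside the Banach lattice $\bigl(I_e,\norm{\cdot}_e\bigr)$; the $\norm{\cdot}_e$-closure of $Y\cap I_e$ there is contained in~$Y$ because $\norm{\cdot}_e$-convergence is uniform convergence with regulator~$e$ and $Y$ is uniformly closed, and one then manufactures a single regulator $w=\sum_{n}n v_n\in Y$ from the block suprema $v_n=\bigvee_{k_n\le k<k_{n+1}}\abs{x_k-x}$. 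But invoking the proposition is the economical route, and I would present the corollary that way.
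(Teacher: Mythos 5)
Your proposal is correct and is exactly how the paper intends the corollary to be read: it is Proposition~\ref{uconv-cl} specialized to a uniformly closed sublattice, where $\uadh{Y}{1}=Y$ forces both the limit and the regulator produced by the proposition to lie in~$Y$. The paper gives no separate proof, so your one-line deduction (and the optional self-contained reprise of the proposition's argument) matches the intended route.
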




We will next prove a variant of Proposition~\ref{maj-cl-in-cl} for
sublattices that are not assumed to be majorizing. As before, $Y$ is
assumed to be a sublattice of a vector lattice~$X$. For
$A\subseteq Y$, we write $\uadh{A}{1}_Y$ and $\ucl{A}_Y$ for the
uniform adherence and, respectively, closure of $A$ in~$Y$. In
particular, $\uadh{A}{1}=\uadh{A}{1}_X$ and $\ucl{A}=\ucl{A}_X$.

\begin{proposition}\label{cl-cl}
  Suppose that $X$ is uniformly complete and $A\subseteq Y$. Then
  $\uadh{A}{1}=\uadh{A}{1}_{\uadh{Y}{1}}$ and
  $\ucl{A}=\ucl{A}_{\uadh{Y}{}}$.
\end{proposition}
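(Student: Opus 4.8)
The plan is to prove each of the two equalities by a double inclusion. In both cases one inclusion is immediate from the fact (recorded in the preliminaries) that uniform convergence of a net in a sublattice — together with its regulator — survives in the ambient space, while the reverse inclusion is exactly where the uniform completeness of $X$ enters, via Proposition~\ref{uconv-cl} for the adherence and Corollary~\ref{uconv-stable} for the closure.

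For the adherence identity I would first note that $\uadh{Y}{1}$ is a sublattice of $X$ containing $A$, so $\uadh{A}{1}_{\uadh{Y}{1}}$ makes sense. The inclusion $\uadh{A}{1}_{\uadh{Y}{1}}\subseteq\uadh{A}{1}$ is free: a net in $A$ converging uniformly to $x$ inside $\uadh{Y}{1}$ converges uniformly to $x$ in $X$ with the same regulator. For the reverse inclusion, take $x\in\uadh{A}{1}$; since it suffices to use sequences, choose $(x_n)$ in $A\subseteq Y$ with $x_n\goesu x$ in $X$, and apply Proposition~\ref{uconv-cl} (valid because $X$ is uniformly complete) to conclude $x_n\goesu x$ in $\uadh{Y}{1}$, whence $x\in\uadh{A}{1}_{\uadh{Y}{1}}$.

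For the closure identity I would work with the genuinely uniformly closed sublattice $\ucl{Y}$. Since $\ucl{Y}$ is uniformly closed in $X$ and contains $A$, we get $\ucl{A}\subseteq\ucl{Y}$; moreover $\ucl{A}$, being uniformly closed in $X$, is hence also uniformly closed as a subset of $\ucl{Y}$ (a net in $\ucl{A}$ converging uniformly in $\ucl{Y}$ converges uniformly in $X$, so its limit stays in $\ucl{A}$), which gives $\ucl{A}_{\ucl{Y}}\subseteq\ucl{A}$. For the reverse inclusion $\ucl{A}\subseteq\ucl{A}_{\ucl{Y}}$ it is enough to check that $\ucl{A}_{\ucl{Y}}$ is uniformly closed in $X$: if $(x_n)$ is a sequence in $\ucl{A}_{\ucl{Y}}$ with $x_n\goesu x$ in $X$, then $x\in\ucl{Y}$ because $\ucl{Y}$ is uniformly closed in $X$, so by Corollary~\ref{uconv-stable} (again using uniform completeness of $X$ and uniform closedness of $\ucl{Y}$) we have $x_n\goesu x$ in $\ucl{Y}$, and therefore $x\in\ucl{A}_{\ucl{Y}}$ since $\ucl{A}_{\ucl{Y}}$ is uniformly closed in $\ucl{Y}$.

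I do not expect a serious obstacle: the argument is essentially a matter of pairing each half of each identity with the correct passage-down result. The one point that genuinely requires care is the choice of intermediate space — Proposition~\ref{uconv-cl} only pushes uniform convergence down into $\uadh{Y}{1}$ (and not into $Y$, which need not be uniformly closed), so the adherence statement must be formulated with $\uadh{Y}{1}$; whereas for the closure statement the larger space $\ucl{Y}$ is uniformly closed, so Corollary~\ref{uconv-stable} applies directly and one may use $\ucl{Y}$ itself.
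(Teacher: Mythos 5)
Your proof is correct. The adherence identity is handled exactly as in the paper: one inclusion is immediate, the other is Proposition~\ref{uconv-cl}. For the closure identity, however, you take a genuinely different and shorter route. The paper proves by transfinite induction the stronger family of identities $\uadh{A}{\kappa}=\uadh{A}{\kappa}_{\uadh{Y}{\kappa}}$ for every ordinal $\kappa$, climbing through the successive adherences of $A$ and of $Y$ simultaneously, and only then takes $\kappa$ large to obtain $\ucl{A}=\ucl{A}_{\ucl{Y}}$. You instead argue directly at the level of closures: $\ucl{A}$ is contained in $\ucl{Y}$ and is uniformly closed there (regulators in $\ucl{Y}$ are regulators in $X$), which gives $\ucl{A}_{\ucl{Y}}\subseteq\ucl{A}$ without any completeness assumption; and $\ucl{A}_{\ucl{Y}}$ is uniformly closed in $X$ because $\ucl{Y}$ is a uniformly closed sublattice of the uniformly complete lattice $X$, so Corollary~\ref{uconv-stable} pushes any uniformly convergent sequence from $X$ down into $\ucl{Y}$, where closedness of $\ucl{A}_{\ucl{Y}}$ finishes the argument (sequential closedness suffices, by the standard extraction of a sequence from a uniformly convergent net with the same regulator and limit). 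Both arguments are sound and rest on the same two ingredients; what the paper's induction buys that yours does not is the level-by-level information $\uadh{A}{\kappa}=\uadh{A}{\kappa}_{\uadh{Y}{\kappa}}$ at every ordinal stage of the adherence process, while your argument buys brevity and avoids transfinite induction for the stated conclusion. Your closing observation -- that the adherence identity must be phrased with $\uadh{Y}{1}$ because Proposition~\ref{uconv-cl} only pushes convergence into $\uadh{Y}{1}$, whereas the closure identity may use the uniformly closed sublattice $\ucl{Y}$ -- is exactly the right point of care, and it also explains the typo in the statement, where the intended ambient space for the second identity is $\ucl{Y}$.
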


\begin{proof}
  If $x\in\uadh{A}{1}$ then there exists a sequence $(y_n)$ in $A$
  such that $y_n\goesu x$ in $X$ and, by Proposition~\ref{uconv-cl},
  in~$\uadh{Y}{1}$. It follows that
  $x\in\uadh{A}{1}_{\uadh{Y}{1}}$. Hence,
  $\uadh{A}{1}\subseteq\uadh{A}{1}_{\uadh{Y}{1}}$. The opposite
  inclusion is straightforward, so we have
  $\uadh{A}{1}=\uadh{A}{1}_{\uadh{Y}{1}}$.

  We will prove by transfinite induction that
  $\uadh{A}{\kappa}=\uadh{A}{\kappa}_{\uadh{Y}{\kappa}}$ for every
  ordinal $\kappa$; this will imply (by
  taking a sufficiently large $\kappa$) that
  $\ucl{A}=\ucl{A}_{\ucl{Y}}$. We have already proved the statement
  for $\kappa=1$. Suppose that we already know that
  \begin{math}
     \uadh{A}{\kappa-1}=\uadh{A}{\kappa-1}_{\uadh{Y}{\kappa-1}}=:B.
  \end{math}
  Since we always have
  \begin{math}
    \uadh{A}{\kappa-1}_{\uadh{Y}{\kappa-1}}\subseteq
    \uadh{A}{\kappa-1}_{\uadh{Y}{\kappa}}\subseteq
    \uadh{A}{\kappa-1},
  \end{math}
  it follows that
  \begin{math}
    \uadh{A}{\kappa-1}_{\uadh{Y}{\kappa}}=B.
  \end{math}
  By the first part of the proof,
  \begin{displaymath}
    \uadh{A}{\kappa}=\uadh{B}{1}
    =\uadh{B}{1}_{\uadh{\uadh{Y}{\kappa-1}}{1}}
    =\uadh{B}{1}_{\uadh{Y}{\kappa}}
    =\uadh{\Bigl(\uadh{A}{\kappa-1}_{\uadh{Y}{\kappa}}\Bigr)}{1}_{\uadh{Y}{\kappa}}
    =\uadh{A}{\kappa}_{\uadh{Y}{\kappa}}.
  \end{displaymath}
  Suppose now that $\kappa$ is a limit ordinal and
  $\uadh{A}{\iota}=\uadh{A}{\iota}_{\uadh{Y}{\iota}}$ whenever
  $\iota<\kappa$. Since we always have
  \begin{math}
    \uadh{A}{\iota}_{\uadh{Y}{\iota}}\subseteq
    \uadh{A}{\iota}_{\uadh{Y}{\kappa}}\subseteq
    \uadh{A}{\iota},
  \end{math}
  it follows that
  \begin{math}
    \uadh{A}{\iota}_{\uadh{Y}{\kappa}}=
            \uadh{A}{\iota}.
  \end{math}
  We conclude that
  \begin{math}
    \uadh{A}{\kappa}_{\uadh{Y}{\kappa}}
    =\bigcup_{\iota<\kappa}\uadh{A}{\iota}_{\uadh{Y}{\kappa}}
    =\bigcup_{\iota<\kappa}\uadh{A}{\iota}
    =\uadh{A}{\kappa}.
  \end{math}
\end{proof}

The following proposition extends Lemma~5.4 in~\cite{Bilokopytov:23}.

\begin{proposition}\label{pass-down}
  TFAE:
  \begin{enumerate}
  \item\label{pass-down-pass} Uniform convergence passes down to~$Y$;
  \item\label{pass-down-bdd} A subset of $Y$ that is order bounded in
    $X$ is also order bounded in~$Y$.
  \end{enumerate}
\end{proposition}

\begin{proof}
  \eqref{pass-down-pass}$\Rightarrow$\eqref{pass-down-bdd} Let
  $A\subseteq Y$ and $u\in X_+$ such that $A\subseteq[-u,u]$. Let
  $\Lambda=(0,1]\times A$. Order $\Lambda$ as follows: $(r,a)\le(s,b)$
  if $s\le r$. This is a directed pre-order on~$\Lambda$. If
  $\alpha=(r,a)\in\Lambda$, put $x_\alpha=ra$. Then $(x_\alpha)$ is a
  net. It is easy to see that $\abs{x_\alpha}\le ru$. It follows that
  $x_\alpha\goesu 0$ in $X$ and, therefore, in~$Y$. In particular, the
  net is eventually order bounded in~$Y$. Since every tail of the net
  contains $rA$ as a subset for some $r>0$, we conclude that $A$ is
  order bounded in~$Y$.

  \eqref{pass-down-bdd}$\Rightarrow$\eqref{pass-down-pass} Suppose
  $x_n\goesur{e}0$ for some sequence $(x_n)$ in $Y$ and $e\in
  X_+$. Then $\norm{x_n}_e\to 0$ and
  $\frac{\abs{x_n}}{\norm{x_n}_e}\le e$ for every $n$, so the sequence
  $\bigl(\frac{\abs{x_n}}{\norm{x_n}_e}\bigr)$ is bounded above by some
  $u\in Y$. It follows that $\abs{x_n}\le\norm{x_n}_eu$, so that
  $x_n\goesur{u}0$.
%
%
%
  %
%
%
\end{proof}

\begin{remark}
  The equivalence of \eqref{pass-down-pass} and \eqref{pass-down-bdd}
  in Proposition~\ref{pass-down} may also be deduced from the general
  theory of bornological convergences. Theorem~7.9 in~\cite{BCTW}
  asserts that a linear operator between bornological vector spaces is
  bounded iff it is continuous with respect to the convergences
  induced by the bornologies. Consider two bornologies on~$Y$: the
  bornology $\mathcal B$ of order bounded sets in $Y$ and the
  bornology $\mathcal C$ of those sets in $Y$ that are order bounded
  in~$X$. Condition \eqref{pass-down-bdd} means that the identity map
  on $Y$ is bounded from $\mathcal C$ to $\mathcal B$. It is easy to
  see that the convergence $\mu_{\mathcal B}$ on $Y$ induced by
  $\mathcal B$ is the uniform convergence in~$Y$, while the
  convergence $\mu_{\mathcal C}$ on $Y$ induced by $\mathcal C$ in the
  restriction of the uniform convergence on $X$
  to~$Y$. Condition~\eqref{pass-down-pass} says that the identity map
  on $Y$ is $\mu_{\mathcal C}$-to-$\mu_{\mathcal B}$ continuous.
\end{remark}

By Lemma~5.4 in~\cite{Bilokopytov:23}, if $Y$ is an ideal in $X$ then
conditions \eqref{pass-down-pass} and \eqref{pass-down-bdd} in
Proposition~\ref{pass-down} imply that $Y$ is a projection band.

\begin{question}
  In view of Proposition~\ref{pass-down}, it is clear that
  Corollary~\ref{uconv-stable} fails for nets. Does it remain valid
  for nets with countable index sets?
\end{question}

If $Y$ is a sublattice of $X$ and $u\in Y$, we write $I^Y_u$ for the
principal ideal generated by $u$ in~$Y$. We write $\Icl{u}{Y}$ for the
closure of $I_u^Y$ in $I_u^X$ under~$\norm{\cdot}_u$.

\begin{proposition}\label{loc-adh}
  Let $Y$ be a sublattice of a vector lattice $X$ and $x\in X$. TFAE:
  \begin{enumerate}
  \item\label{loc-adh-adh} $x$ is a uniform limit in $X$ of a sequence
    in $Y$ with a regulator in~$Y$;
  \item\label{loc-adh-I} There exists $u\in Y_+$ such that
    $x\in\Icl{u}{Y}$;
  \item\label{loc-adh-id} $x$ belongs to the uniform adherence of $Y$
    in $I(Y)$.
  \end{enumerate}
\end{proposition}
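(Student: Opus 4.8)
The proof consists of unwinding the definitions, and I would arrange it as the cyclic chain \eqref{loc-adh-adh}$\Rightarrow$\eqref{loc-adh-I}$\Rightarrow$\eqref{loc-adh-id}$\Rightarrow$\eqref{loc-adh-adh}. Two elementary observations will be used repeatedly: since $Y$ is a sublattice, an inequality between elements of $Y$ holds in $Y$ precisely when it holds in $X$, so for $u\in Y_+$ one has $z\in I_u^Y$ iff $z\in Y$ and $\abs{z}\le\lambda u$ in $X$ for some $\lambda$; and $I(Y)=\{z\in X:\abs{z}\le y\text{ for some }y\in Y_+\}$, so every positive element of $I(Y)$ is dominated by a positive element of~$Y$. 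Recall also that $\norm{y_n-x}_u\to0$ is by definition the same as $y_n\goesur{u}x$.

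For \eqref{loc-adh-adh}$\Rightarrow$\eqref{loc-adh-I}, suppose $y_n\goesur{e}x$ in $X$ with $(y_n)$ in $Y$ and $e\in Y_+$. After passing to a subsequence I may assume $\abs{y_n-x}\le\tfrac1n e$ for every $n$, and I set $u\coloneqq\abs{y_1}+e\in Y_+$. Then $\abs{x}\le\abs{y_1}+\abs{y_1-x}\le u$, so $x\in I_u^X$; next $\abs{y_n}\le\abs{x}+\abs{y_n-x}\le 2u$ is an inequality between elements of $Y$, so $y_n\in I_u^Y$; and $\abs{y_n-x}\le\tfrac1n e\le\tfrac1n u$ gives $\norm{y_n-x}_u\le\tfrac1n\to0$, i.e.\ $x\in\Icl{u}{Y}$. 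For \eqref{loc-adh-I}$\Rightarrow$\eqref{loc-adh-id}, if $x\in\Icl{u}{Y}$ with $u\in Y_+$, pick $(y_n)$ in $I_u^Y\subseteq Y$ with $\norm{y_n-x}_u\to0$; since $x\in I_u^X\subseteq I(Y)$ and $u\in I(Y)_+$, the relation $y_n\goesur{u}x$ takes place inside $I(Y)$, so $x$ lies in the uniform adherence of $Y$ in $I(Y)$. Finally, for \eqref{loc-adh-id}$\Rightarrow$\eqref{loc-adh-adh}, since it suffices to consider sequences there is a sequence $(y_n)$ in $Y$ with $y_n\goesur{r}x$ in $I(Y)$ for some $r\in I(Y)_+$; choosing $v\in Y_+$ with $r\le v$, we get $y_n\goesur{v}x$ in $X$ with the regulator $v\in Y_+$, which is \eqref{loc-adh-adh}.

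The only step that is not purely formal is \eqref{loc-adh-adh}$\Rightarrow$\eqref{loc-adh-I}, and the point of care there is the choice of the regulator $u$: one cannot simply retain $u=e$, because $x$ need not belong to $I_e^X$ (for example in $X=C[0,1]$ with $e$ vanishing at a point), so $e$ must be enlarged to $\abs{y_1}+e$, which simultaneously brings $x$ into the principal ideal and keeps the approximating sequence inside it. Beyond this bookkeeping with regulators I do not anticipate any genuine obstacle.
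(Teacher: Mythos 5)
Your proof is correct and follows essentially the same route as the paper: the same cyclic chain, the same enlarged regulator $u=\abs{y_1}+e$ to pull $x$ into a principal ideal while keeping the approximating sequence in $I_u^Y$, and the same use of a dominating element of $Y_+$ for the regulator in (iii)$\Rightarrow$(i) (which the paper simply calls straightforward).
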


\begin{proof}
  \eqref{loc-adh-adh}$\Rightarrow$\eqref{loc-adh-I} There exists a
  sequence $(y_n)$ in $Y$ and $v\in Y_+$ such that
  $\abs{y_n-x}\le\frac1n v$ for all $n\in\mathbb N$. Then
  $\abs{x}\le\abs{y_1}+v$; denote the latter by~$u$. Clearly,
  $x\in I_u^X$.  For every $n$, we have $\abs{y_n-x}\le\frac1n u$, so
  that $\norm{y_n-x}_u\to 0$. Also, it follows from
  $\abs{y_n}\le\abs{x}+\frac{1}{n}v\le 2u$ that $(y_n)$ is in~$I^Y_u$.

  \eqref{loc-adh-I}$\Rightarrow$\eqref{loc-adh-id} Clearly,
  $x\in I_u^X\subseteq I(Y)$. Moreover, there exists a sequence
  $(y_n)$ in $I_u^Y$ (hence in $Y$) such that $\norm{y_n-x}_u\to
  0$. It follows that $y_n\goesur{u}x$ in $I(Y)$.

   \eqref{loc-adh-id}$\Rightarrow$\eqref{loc-adh-adh} is straightforward.
\end{proof}

It is easy to see that if $u\le v$ in $Y_+$ then
$\Icl{u}{Y}\subseteq\Icl{v}{Y}$.

\begin{corollary}\label{IuX-cofin}
  Let $Y$ be a sublattice of a vector lattice $X$ and $A\subseteq Y$
  such that for every $u\in Y_+$ there exists $v\in A$ with $u\le
  v$. Then $\bigcup_{v\in A}\Icl{v}{Y}$
  is the uniform adherence of $Y$ in $I(Y)$.
\end{corollary}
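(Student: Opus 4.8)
The plan is to deduce this directly from Proposition~\ref{loc-adh}, using the monotonicity remark that immediately precedes the corollary. Write $D$ for the uniform adherence of $Y$ in $I(Y)$, and $E=\bigcup_{v\in A}\Icl{v}{Y}$ (the indexing in the statement should read $\Icl{v}{Y}$, as $u$ is not a free variable here). First I would establish $E\subseteq D$: if $x\in\Icl{v}{Y}$ for some $v\in A\subseteq Y_+$, then condition~\eqref{loc-adh-I} of Proposition~\ref{loc-adh} holds with $u=v$, hence by~\eqref{loc-adh-id} of that proposition $x$ lies in the uniform adherence of $Y$ in $I(Y)$, i.e.\ $x\in D$. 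This direction does not even use the cofinality hypothesis on $A$.

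For the reverse inclusion $D\subseteq E$, take $x\in D$. By Proposition~\ref{loc-adh}, \eqref{loc-adh-id}$\Rightarrow$\eqref{loc-adh-I}, there exists $u\in Y_+$ with $x\in\Icl{u}{Y}$. Now invoke the hypothesis on $A$: there is $v\in A$ with $u\le v$. By the monotonicity statement just before the corollary ($u\le v$ in $Y_+$ implies $\Icl{u}{Y}\subseteq\Icl{v}{Y}$), we get $x\in\Icl{v}{Y}\subseteq E$. This proves $D=E$.

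There is no real obstacle here; the corollary is essentially a repackaging of the equivalence of \eqref{loc-adh-I} and \eqref{loc-adh-id} in Proposition~\ref{loc-adh} together with the fact that the family $\bigl\{\Icl{u}{Y}\bigr\}_{u\in Y_+}$ is directed (increasing) and $A$ is cofinal in $Y_+$, so restricting to regulators in $A$ loses nothing. The only point deserving a line of care is to note that each $v\in A$ does belong to $Y_+$ (so that $\Icl{v}{Y}$ and the application of Proposition~\ref{loc-adh} make sense) — this is automatic since $A\subseteq Y$ and, by the hypothesis applied to $u=0$, every element of $A$ dominates $0$, hence $A\subseteq Y_+$. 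I would keep the write-up to a few sentences mirroring the two inclusions above.
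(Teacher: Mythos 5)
Your argument is correct and is exactly the paper's intended proof: the corollary is stated there without a separate argument, as an immediate consequence of the equivalence \eqref{loc-adh-I}$\Leftrightarrow$\eqref{loc-adh-id} in Proposition~\ref{loc-adh} together with the monotonicity remark and the cofinality of $A$ in~$Y_+$ (and you are right that the union should be indexed by $\Icl{v}{Y}$). One small caveat: your justification that $A\subseteq Y_+$ is faulty, since applying the cofinality hypothesis to $u=0$ only yields that \emph{some} $v\in A$ satisfies $v\ge 0$; the statement should simply be read with $A\subseteq Y_+$, or else replace each $v$ by $\abs{v}$ (equivalently $v^+$), which changes nothing because $\Icl{v}{Y}=\Icl{\abs{v}}{Y}$ and $0\le u\le v$ implies $u\le v^+$.
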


In particular, if $I_u^Y$ is closed in $I_u^X$ under $\norm{\cdot}_u$,
for every $u\in A$, then $Y$ is uniformly closed in $I(Y)$.

\begin{proposition}\label{ideals-up}
  Let $Y$ be a sublattice of a vector lattice $X$. If
  $\Icl{u}{Y}$ is an ideal in
  $\Icl{v}{Y}$ whenever $u\le v$ in $Y_+$ then
  $\bigcup_{v\in Y_+}\Icl{v}{Y}$ is the uniform
  closure of $Y$ in $I(Y)$. Moreover, for every $u\in Y_+$ the ideal
  generated by $u$ in that closure is~$\Icl{u}{Y}$.
\end{proposition}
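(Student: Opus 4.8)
The plan is to identify the set $W\coloneqq\bigcup_{v\in Y_+}\Icl{v}{Y}$, to show it is uniformly closed in $I(Y)$, and to note that it already lies between $Y$ and the uniform closure $\ucl{Y}$ (taken in $I(Y)$). By the equivalence of \eqref{loc-adh-I} and \eqref{loc-adh-id} in Proposition~\ref{loc-adh}, $W$ is precisely the uniform adherence of $Y$ in $I(Y)$, so $Y\subseteq W\subseteq\ucl{Y}$; hence once $W$ is shown to be uniformly closed in $I(Y)$ it must equal $\ucl{Y}$. The engine of the argument is the following \emph{regulator-merging claim}: if $w\in W$ and $\abs{w}\le\lambda u$ for some $u\in Y_+$ and $\lambda>0$, then $w\in\Icl{u}{Y}$. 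To prove it, pick $u'\in Y_+$ with $w\in\Icl{u'}{Y}$ and set $u''\coloneqq u\vee u'\in Y_+$ (legitimate since $Y$ is a sublattice). Then $w\in\Icl{u'}{Y}\subseteq\Icl{u''}{Y}$, and by the hypothesis applied to $u\le u''$ the set $\Icl{u}{Y}$ is an ideal in $\Icl{u''}{Y}$; since $\lambda u\in I_u^Y\subseteq\Icl{u}{Y}$ and $\abs{w}\le\lambda u$, the ideal property forces $w\in\Icl{u}{Y}$.

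This claim settles the ``moreover'' clause immediately: $\Icl{u}{Y}\subseteq W$, and every element of $\Icl{u}{Y}$ is dominated by a multiple of $u$ (by the Archimedean property $\abs{z}\le\norm{z}_u\,u$), so $\Icl{u}{Y}$ is contained in the principal ideal generated by $u$ in $W$; conversely any $w\in W$ with $\abs{w}\le\lambda u$ lies in $\Icl{u}{Y}$ by the claim. Hence that ideal is exactly $\Icl{u}{Y}$.

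For uniform closedness of $W$ in $I(Y)$, suppose $(w_n)$ is a sequence in $W$ with $w_n\goesu x$ in $I(Y)$. Its regulator lies in $I(Y)_+$, hence is dominated by some $v\in Y_+$ (as $I(Y)$ is the ideal generated by $Y$), so after passing to a subsequence we may assume $\abs{w_n-x}\le\frac1n v$. Since $w_1\in W$, there are $u_1\in Y_+$ and $\lambda_1>0$ with $\abs{w_1}\le\lambda_1 u_1$; put $u\coloneqq(\lambda_1 u_1)\vee v\in Y_+$. Then $\abs{x}\le\abs{w_1}+v\le 2u$ and $\abs{w_n}\le\abs{x}+v\le 3u$, so all $w_n$ and $x$ lie in $I_u^X$, and $3u\in I_u^Y\subseteq\Icl{u}{Y}$. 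By the claim, $w_n\in\Icl{u}{Y}$ for every $n$. Finally $\norm{w_n-x}_u\le\frac1n\to0$ because $v\le u$, and $\Icl{u}{Y}$ is by definition $\norm{\cdot}_u$-closed in $I_u^X$, so $x\in\Icl{u}{Y}\subseteq W$, as required.

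The one genuine obstacle is the regulator-merging claim. Uniform convergence with a regulator in $Y$ only tells us that the limit lies in \emph{some} $\Icl{v}{Y}$, and we must pin it down to a prescribed $\Icl{u}{Y}$ once an order bound $\abs{x}\le\lambda u$ is available; the hypothesis that $\Icl{u}{Y}$ is an ideal in the larger $\Icl{u\vee u'}{Y}$ is exactly what converts that order bound into membership. Everything else — reducing to sequences and to the first adherence via Proposition~\ref{loc-adh}, pulling the regulator into $Y$, passing to a subsequence, and the closedness of a norm closure — is routine.
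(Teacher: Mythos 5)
Your proof is correct and follows essentially the same route as the paper's: your regulator-merging claim is exactly the paper's observation that the hypothesis makes $\Icl{u}{Y}$ an ideal in $Z=\bigcup_{v\in Y_+}\Icl{v}{Y}$ and hence equal to the principal ideal of $u$ in~$Z$ (the ``moreover'' clause), after which uniform closedness of $Z$ in $I(Y)$ comes from the $\norm{\cdot}_u$-closedness of these principal ideals. The only difference is presentational: where the paper cites Corollary~\ref{IuX-cofin} and the remark following it (applied to $Z$) to get uniform closedness, you unpack that citation into an explicit sequence/regulator argument.
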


\begin{proof}
  Put $Z\coloneq\bigcup_{v\in Y_+}\Icl{v}{Y}$. It is easy to see that
  $Z\subseteq I(Y)$.  For every $u\in Y_+$, we have
  \begin{math}
    Z=\bigcup_{v\ge u}\Icl{v}{Y}.
  \end{math}
  It follows from the assumption that
  $\Icl{u}{Y}$ is an ideal in~$Z$.

  We claim that $\Icl{u}{Y}=I_u^Z$. On one hand, if $z\in\Icl{u}{Y}$
  then $z\in Z$ by the definition of $Z$ and $z\in I_u^X$ by the
  definition of~$\Icl{u}{Y}$; it follows that $z\in I_u^Z$, hence
  $\Icl{u}{Y}\subseteq I_u^Z$. Since $I_u^Z$ is the least ideal in $Z$
  containing $u$, we have $I_u^Z\subseteq\Icl{u}{Y}$.

  We conclude that $I_u^Z$ is closed in $I_u^X$
  under~$\norm{\cdot}_u$. Now since $u$ was arbitrary, by the comment
  before the proposition we conclude that $Z$ is uniformly closed in
  $I(Y)$, thus equal to the uniform closure of $Y$ in $I(Y)$ by
  Corollary~\ref{IuX-cofin}.
\end{proof}

\section{Uniform completion vs uniform closure.}

\begin{proposition}[Proposition~2.2 in \cite{Kalauch:19}]\label{u-compl-closed}
  Let $Y$ be a sublattice of a uniformly complete vector
  lattice~$X$. If $Y$ is uniformly closed in $X$ then $Y$ is uniformly
  complete. The converse holds if $Y$ is majorizing.
\end{proposition}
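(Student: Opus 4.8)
The plan is to prove the two implications separately. Throughout I would rely on three facts already available: the ideal characterization that a vector lattice is uniformly complete iff each principal ideal $\bigl(I_u,\norm{\cdot}_u\bigr)$ is a Banach space; the equivalence recorded in the preliminaries that a sublattice $Y$ is uniformly closed in $X$ iff $Y=\uadh{Y}{1}$; and the uniqueness of uniform limits in~$X$.

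For the forward implication ($Y$ uniformly closed $\Rightarrow$ $Y$ uniformly complete) I would argue ideal by ideal. Fix $u\in Y_+$ and a $\norm{\cdot}_u$-Cauchy sequence $(y_n)$ in $I_u^Y$. It is then $\norm{\cdot}_u$-Cauchy in $I_u^X$, which is complete since $X$ is uniformly complete, so $y_n\goesur{u}x$ in $X$ for some $x\in I_u^X$. Uniform closedness of $Y$ gives $x\in Y$, hence $x\in I_u^X\cap Y=I_u^Y$, and $\norm{y_n-x}_u\to 0$; thus $\bigl(I_u^Y,\norm{\cdot}_u\bigr)$ is complete and $Y$ is uniformly complete. (One could instead start from a uniformly Cauchy net, but that forces one to track the regulator of the limit; the ideal-by-ideal version sidesteps this.)

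For the converse, assuming $Y$ majorizing and uniformly complete, I would show $\uadh{Y}{1}=Y$. Take $x\in\uadh{Y}{1}$ and a sequence $(y_n)$ in $Y$ with $y_n\goesur{e}x$ in $X$ for some $e\in X_+$. Since $Y$ is majorizing, choose $f\in Y_+$ with $f\ge e$; then $y_n\goesur{f}x$, so $(y_n)$ is uniformly Cauchy in $X$ with regulator~$f$. Because $f\in Y$ and $Y$ is a sublattice, the estimates $\abs{y_n-y_m}\le\varepsilon f$ are valid inside $Y$ as well, so $(y_n)$ is uniformly Cauchy in~$Y$; uniform completeness of $Y$ yields $y_n\goesu y$ in $Y$, hence in $X$, for some $y\in Y$. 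Uniqueness of uniform limits in $X$ forces $x=y\in Y$.

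The first implication is routine; the real content is the converse, and the one step I expect to matter is the replacement of the ambient regulator $e\in X_+$ by one in $Y_+$ — this is precisely where ``majorizing'' is used, and Example~\ref{ex:c00} (with $c_{00}\subseteq c_0$) shows that without this hypothesis a uniformly complete sublattice can fail to be uniformly closed. The remaining moves are bookkeeping: transferring lattice and order estimates between $Y$ and $X$ (legitimate since $Y$ is a sublattice) and invoking uniqueness of uniform limits in~$X$.
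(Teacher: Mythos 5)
Your proof is correct. The converse direction is essentially the paper's own argument: pass from the regulator $e\in X_+$ to a majorant $f\in Y_+$, observe that the resulting Cauchy estimates live inside $Y$, use uniform completeness of $Y$ and uniqueness of uniform limits in $X$; this matches the paper step for step (and your identification of majorization as the one essential hypothesis, with $c_{00}\subseteq c_0$ as the counterexample, is exactly the paper's point). The forward direction, however, takes a slightly different route. The paper starts from a uniformly Cauchy sequence in $Y$, gets a limit $x\in Y$ via closedness, and then must still argue that the convergence holds \emph{in} $Y$ (i.e., with a regulator in $Y$); for this it invokes Lemma~\ref{uo-Cauchy-amb} (or Corollary~\ref{uconv-stable}). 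You instead verify the criterion that $\bigl(I_u^Y,\norm{\cdot}_u\bigr)$ is complete for every $u\in Y_+$: since the regulator $u$ lies in $Y$ from the outset, the limit $x\in I_u^X\cap Y=I_u^Y$ automatically satisfies $\norm{y_n-x}_u\to 0$ inside $Y$, and no transfer lemma is needed. What your version buys is independence from Lemma~\ref{uo-Cauchy-amb}, at the cost of leaning on the principal-ideal characterization of uniform completeness (which the paper does record in the preliminaries, so this is legitimate); what the paper's version buys is a proof that works directly with uniformly Cauchy sequences and showcases the transfer lemma it has just established. Both arguments are complete and correct.
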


\begin{proof}
  Suppose that $Y$ is uniformly closed. Let $(x_n)$ be a uniformly
  Cauchy sequence in~$Y$. Clearly, it remains uniformly Cauchy when
  viewed as a sequence in $X$. Hence $x_n\goesu x$ for some $x\in
  X$. Since $Y$ is uniformly closed, we have $x\in Y$. We now have
  $x_n\goesu x$ in $Y$ by Lemma~\ref{uo-Cauchy-amb} or by
  Corollary~\ref{uconv-stable}.

  Suppose now that $Y$ is majorizing in $X$ and uniformly
  complete. Let $x_n\goesu x$ in $X$ for some $(x_n)$ in $Y$ and
  $x\in X$. Then $(x_n)$ is uniformly Cauchy in~$X$. Since $Y$ is
  majorizing, $(x_n)$ is uniformly Cauchy in~$Y$. Then $x_n\goesu y$
  in $Y$ for some $y\in Y$. It follows that $x_n\goesu y$ in~$X$;
  hence $x=y$ and, therefore, $x\in Y$.
\end{proof}

Example~\ref{ex:c00} shows that the ``majorizing''
assumption in the preceding proposition cannot be removed.  The
following result is similar to Lemma~3 in~\cite{Emelyanov:24}.

\begin{proposition}\label{ucompl-int}
  The intersection of any non-empty family of uniformly complete
  sublattices of a vector lattice is again uniformly complete.
\end{proposition}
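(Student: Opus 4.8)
The plan is to reduce the statement to the principal-ideal criterion for uniform completeness recalled in the preliminaries: a vector lattice is uniformly complete if and only if $\bigl(I_u,\norm{\cdot}_u\bigr)$ is a complete normed space for every $u$ in its positive cone. Let $\{Y_i\}_{i\in I}$ be the given non-empty family of uniformly complete sublattices of a vector lattice~$X$, and put $Y=\bigcap_{i\in I}Y_i$. Since an intersection of sublattices is a sublattice, $Y$ is a sublattice of~$X$. Fix $u\in Y_+$. First I would note that the norm $\norm{\cdot}_u$ is computed the same way in $Y$, in each $Y_i$, and in $X$: for $z\in Y$ the value $\inf\{\lambda\in\mathbb R_+\mid\abs{z}\le\lambda u\}$ depends only on the absolute value and the order, both of which are inherited from~$X$ by every sublattice. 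Thus we may view $\bigl(I^Y_u,\norm{\cdot}_u\bigr)$ and each $\bigl(I^{Y_i}_u,\norm{\cdot}_u\bigr)$ as normed subspaces of $\bigl(I^X_u,\norm{\cdot}_u\bigr)$.

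Next I would verify the identity $I^Y_u=\bigcap_{i\in I}I^{Y_i}_u$; the inclusion ``$\subseteq$'' is clear, and ``$\supseteq$'' holds because a vector lying in every $I^{Y_i}_u$ belongs to every $Y_i$, hence to $Y$, and is dominated by a multiple of $u$. Since each $Y_i$ is uniformly complete, $\bigl(I^{Y_i}_u,\norm{\cdot}_u\bigr)$ is complete, hence a closed subspace of $\bigl(I^X_u,\norm{\cdot}_u\bigr)$; therefore $I^Y_u$, being an intersection of closed subspaces of $I^X_u$, is closed in $I^X_u$. Choosing any $i_0\in I$ (this is where non-emptiness is used), we have $I^Y_u\subseteq I^{Y_{i_0}}_u\subseteq I^X_u$, so $I^Y_u$ is a closed subspace of the complete space $\bigl(I^{Y_{i_0}}_u,\norm{\cdot}_u\bigr)$ and is therefore complete. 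As $u\in Y_+$ was arbitrary, the criterion yields that $Y$ is uniformly complete.

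I do not expect a real obstacle: the only points needing care are the (routine) observation that $\norm{\cdot}_u$ is ambient-independent and the identity $I^Y_u=\bigcap_i I^{Y_i}_u$, after which the conclusion is the soft fact that an intersection of complete subspaces of a normed space is complete. As an alternative that bypasses the principal-ideal criterion, one can argue directly: a net $(x_\alpha)$ that is uniformly Cauchy in $Y$ has a regulator $v\in Y_+$ and is a $\norm{\cdot}_v$-Cauchy net in each complete space $\bigl(I^{Y_i}_v,\norm{\cdot}_v\bigr)$, hence $\norm{\cdot}_v$-converges to some $y_i\in Y_i$, so that $x_\alpha\goesur{v}y_i$ in $Y_i$ and hence in~$X$; uniqueness of uniform limits in $X$ forces all the $y_i$ to equal a single $y$, which then lies in $Y$, and $x_\alpha\goesur{v}y$ holds in $Y$ because the inequalities $\abs{x_\alpha-y}\le\varepsilon v$ are valid in $X$ and restrict to~$Y$.
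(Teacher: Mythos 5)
Your main argument is correct, but it follows a genuinely different route from the paper. The paper argues directly with a uniformly Cauchy sequence in the intersection $Z$: the sequence is uniformly Cauchy in each member $Y$ of the family, hence converges in $Y$ (with a regulator in $Y$), uniqueness of uniform limits in the ambient space shows the limit is independent of $Y$ and so lies in $Z$, and finally Lemma~\ref{uo-Cauchy-amb} is invoked to pass the convergence down to $Z$. You instead reduce everything to the principal-ideal criterion of \cite[Proposition~5.2]{BCTW}: after the (correct) observations that $\norm{\cdot}_u$ is ambient-independent and that $I^Y_u=\bigcap_i I^{Y_i}_u$, the conclusion becomes pure normed-space soft analysis (complete subspaces are closed, closed subspaces of complete spaces are complete). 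What your approach buys is that it bypasses Lemma~\ref{uo-Cauchy-amb} entirely and isolates where non-emptiness enters; what the paper's approach buys is that it stays at the level of uniform convergence and reuses a lemma already established for other purposes. Your ``alternative'' sketch in the last paragraph is essentially the paper's proof, with the nice twist that by working inside $I_v$ you keep the regulator $v\in Y_+$ throughout and so can restrict the inequalities $\abs{x_\alpha-y}\le\varepsilon v$ directly instead of citing the lemma; the only imprecision there is the claim that the uniformly Cauchy net itself lies in $I^{Y_i}_v$ --- in general only a tail does, and only after subtracting a fixed term $x_{\alpha_0}$ (or after enlarging the regulator to $v+\abs{x_{\alpha_0}}$), a routine repair that should be mentioned.
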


\begin{proof}
  Let $\mathcal A$ be a non-empty family of uniformly complete
  sublattices of a vector lattice~$X$, and let $Z=\bigcap\mathcal A$.
  Let $(x_n)$ be a uniformly Cauchy sequence in~$Z$; in particular,
  there is a regulator in~$Z$. Let $Y\in\mathcal A$. It follows from
  $Z\subseteq Y$ that $(x_n)$ is still uniformly Cauchy in~$Y$. Since
  $Y$ is uniformly complete, $x_n\goesu x$ in $Y$ for some $x\in Y$
  (and with a regulator in~$Y$). Therefore, $x_n\goesu x$
  in~$X$. Since uniform limits are unique, we conclude that $x$ does
  not depend on~$Y$. Then $x$ belongs to every member of~$\mathcal A$,
  hence to~$Z$. Since $x_n\goesu x$ in~$Y$, Lemma~\ref{uo-Cauchy-amb}
  yields $x_n\goesu x$ in~$Z$.
\end{proof}

Let $X$ be a vector lattice. Let $\Xru$ be the
intersection of the family of all uniformly complete sublattices of
$X^\delta$ that contain~$X$. Then $\Xru$ itself is a sublattice of
$X^\delta$ containing $X$. We claim that $\Xru$ is uniformly
complete. On one hand, this follows immediately from
Proposition~\ref{ucompl-int}. Alternatively, we may observe that
sublattices in the family contain $X$ and, therefore, are majorizing
in~$X^\delta$, hence uniformly closed in $X^\delta$ by
Proposition~\ref{u-compl-closed}; this implies that $\Xru$ is
uniformly closed and, therefore, uniformly complete. We call $\Xru$
the \term{relatively uniform} or just \term{uniform completion} of~$X$
(we use $\Xru$ rather than $X^u$ to distinguish it from the universal
completion). Our definition of $\Xru$ is similar to that
in~\cite{Veksler:69}.

\begin{proposition}\label{Xru-cl-Xdelta}
  $\Xru$ equals the uniform closure $\ucl{X}$ of $X$ in~$X^\delta$.
\end{proposition}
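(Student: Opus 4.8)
The plan is to show the two inclusions separately, using Proposition~\ref{u-compl-closed} together with the fact that $X$ (and hence any sublattice of $X^\delta$ containing $X$) is majorizing in~$X^\delta$, since $X^\delta$ is the order completion of~$X$.

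First I would verify that $\ucl{X}$ is a member of the family whose intersection defines~$\Xru$. Indeed, the uniform closure of a sublattice is a sublattice, so $\ucl{X}$ is a sublattice of $X^\delta$ containing~$X$; being uniformly closed in $X^\delta$ and majorizing (it contains $X$), Proposition~\ref{u-compl-closed} gives that $\ucl{X}$ is uniformly complete. Hence $\ucl{X}$ belongs to the family, and since $\Xru$ is the intersection of that family, $\Xru\subseteq\ucl{X}$.

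Conversely, let $Y$ be any uniformly complete sublattice of $X^\delta$ containing~$X$. Then $Y$ is majorizing in~$X^\delta$, so by the ``converse'' part of Proposition~\ref{u-compl-closed} it is uniformly closed in~$X^\delta$. Thus $Y$ is a uniformly closed subset of $X^\delta$ containing~$X$; since $\ucl{X}$ is the least such set, $\ucl{X}\subseteq Y$. Taking the intersection over all such $Y$ yields $\ucl{X}\subseteq\Xru$, and combining the two inclusions gives $\Xru=\ucl{X}$.

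I do not anticipate a real obstacle here: the argument is essentially the remark already made in the paragraph introducing~$\Xru$. The only point requiring a little care is to make sure the ``majorizing'' hypothesis of Proposition~\ref{u-compl-closed} is legitimately available for every member of the family — which it is, precisely because each such sublattice contains~$X$ and $X$ is majorizing in its order completion~$X^\delta$.
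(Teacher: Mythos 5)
Your proof is correct and is essentially the paper's own argument: both reduce the statement to Proposition~\ref{u-compl-closed}, using that any sublattice of $X^\delta$ containing $X$ is automatically majorizing, so that for such sublattices uniform completeness and uniform closedness coincide. Your version merely unpacks the paper's one-line identification into the two explicit inclusions.
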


\begin{proof}
  For a sublattice $Y$ of $X^\delta$ such that $X\subseteq Y$, we know
  that $Y$ is uniformly complete iff it is uniformly closed in
  $X^\delta$ by Proposition~\ref{u-compl-closed}. Hence,
  $\Xru$ is the intersection of all uniformly closed
  sublattices of $X^\delta$ containing~$X$; the latter equals~$\ucl{X}$.
\end{proof}

In the preceding discussion, the order completion $X^\delta$ of $X$
plays a critical role. Would it be possible to find an equivalent
definition of $\Xru$ that avoids explicit use of~$X^\delta$?  Can we
replace $X^\delta$ with another ``ambient space''?
Example~\ref{ex:c00} shows that we generally cannot define $\Xru$ as
the closure of $X$ in an arbitrary uniformly complete vector lattice
containing~$X$. We will show later that, in some sense, we can view
$\Xru$ as the intersection of all uniformly complete vector lattices
that contain $X$ as a sublattice.

Using $X\subseteq\Xru\subseteq X^\delta$ and
Proposition~\ref{maj-o-dense}\eqref{maj-o-dense-clo}, we immediately
get the following:

\begin{proposition}\label{ru-maj}
  $X$ is order dense (moreover, super order dense) and majorizing in~$\Xru$.
\end{proposition}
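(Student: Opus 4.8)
The plan is to lean on the two facts the paper has already set up: that $\Xru=\ucl{X}$, the uniform closure of $X$ in $X^\delta$ (Proposition~\ref{Xru-cl-Xdelta}), and that $X$ is majorizing in $X^\delta$, which is part of the standard description of the Dedekind completion (every element of $X^\delta_+$ is a supremum of a subset of $X$ that is bounded above in $X$). Everything then reduces to Proposition~\ref{maj-o-dense}\eqref{maj-o-dense-clo} applied with the pair $X\subseteq X^\delta$.

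First I would dispose of \emph{majorizing}. Since $X$ is majorizing in $X^\delta$ and $X\subseteq\Xru\subseteq X^\delta$, any $x\in(\Xru)_+\subseteq(X^\delta)_+$ is dominated by some $u\in X_+$; as $u\in X\subseteq\Xru$, this witnesses that $X$ is majorizing in $\Xru$. (More generally, majorizing passes to any sublattice lying between $X$ and $X^\delta$.)

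Next, \emph{super order density}. Fix $0\le x\in\Xru=\ucl{X}$. Because $X$ is a majorizing sublattice of $X^\delta$, Proposition~\ref{maj-o-dense}\eqref{maj-o-dense-clo} produces an increasing sequence $(y_n)$ in $X_+$ with $x=\sup_{X^\delta}y_n$. It remains to see that this supremum is still computed correctly inside the sublattice $\Xru$: since $x\in\Xru$ and $x\ge y_n$ for all $n$, $x$ is an upper bound of $(y_n)$ in $\Xru$; and any upper bound of $(y_n)$ in $\Xru$ is \emph{a fortiori} an upper bound in $X^\delta$, hence $\ge x$. Thus $y_n\uparrow x$ in $\Xru$, so $X$ is super order dense (and therefore order dense) in $\Xru$.

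I do not expect a genuine obstacle here; the only point requiring a word of care is the passage of the supremum from $X^\delta$ down to the intermediate sublattice $\Xru$, and that is immediate precisely because the limit $x$ already belongs to $\Xru$. The argument is short enough that it can reasonably be stated as "follows immediately," as the surrounding text already anticipates.
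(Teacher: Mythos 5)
Your proof is correct and follows exactly the route the paper intends: the paper gives no separate argument but derives the proposition "immediately" from $X\subseteq\Xru\subseteq X^\delta$ together with Proposition~\ref{maj-o-dense}\eqref{maj-o-dense-clo}, which is precisely what you do, merely making explicit the (easy) passage of the supremum from $X^\delta$ down to $\Xru$ and the inheritance of the majorizing property by intermediate sublattices.
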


Propositions~\ref{maj-cl-in-cl} and~\ref{Xru-cl-Xdelta} yield the following:

\begin{corollary}\label{ru-clos-ru}
  The uniform closure of $X$ in $\Xru$ is~$\Xru$.
\end{corollary}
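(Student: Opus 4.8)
The plan is to read off Corollary~\ref{ru-clos-ru} from Proposition~\ref{maj-cl-in-cl} applied to the sublattice $X$ of its Dedekind completion $X^\delta$. First I would record that $X$ is majorizing in $X^\delta$; this is a standard feature of the Dedekind completion and is in fact already used implicitly above (it is precisely why every sublattice of $X^\delta$ containing $X$ is majorizing in $X^\delta$, and why uniform convergence in $X^\delta$ passes down to such sublattices via Proposition~\ref{majorizing}). With this in hand, Proposition~\ref{maj-cl-in-cl}, invoked with ambient space $X^\delta$ and majorizing sublattice $X$, says exactly that the uniform closure of $X$ computed inside $\ucl{X}$ equals $\ucl{X}$, where $\ucl{X}$ is the uniform closure of $X$ in $X^\delta$.

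It then only remains to translate this into the notation $\Xru$. By Proposition~\ref{Xru-cl-Xdelta} we have $\Xru=\ucl{X}$, so the conclusion of the previous paragraph is literally the assertion that the uniform closure of $X$ in $\Xru$ is $\Xru$, which is what is claimed.

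There is essentially no obstacle here: the substance is already carried by Propositions~\ref{maj-cl-in-cl} and~\ref{Xru-cl-Xdelta}. The only point worth a moment's care is that ``uniform closure of $X$ in $\Xru$'' means the closure relative to the ambient lattice $\Xru$, which is exactly the setting of Proposition~\ref{maj-cl-in-cl}. If one preferred to avoid citing Proposition~\ref{maj-cl-in-cl} and argue directly, one would repeat its short proof in this instance: let $Z$ be the uniform closure of $X$ in $\Xru$; since $\Xru$ is uniformly closed in $X^\delta$ and $\Xru$ (containing the majorizing sublattice $X$) is majorizing in $X^\delta$, Proposition~\ref{majorizing} forces any net in $Z$ that converges uniformly in $X^\delta$ to converge uniformly in $\Xru$, so its limit lies in $Z$; hence $Z$ is uniformly closed in $X^\delta$, and from $X\subseteq Z\subseteq\Xru=\ucl{X}$ together with minimality of $\ucl{X}$ we get $Z=\Xru$.
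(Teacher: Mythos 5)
Your proposal is correct and matches the paper's own argument: the corollary is obtained precisely by combining Proposition~\ref{maj-cl-in-cl} (applied to the majorizing sublattice $X$ of $X^\delta$) with the identification $\Xru=\ucl{X}$ from Proposition~\ref{Xru-cl-Xdelta}. The optional direct argument you sketch is just an unfolding of the proof of Proposition~\ref{maj-cl-in-cl} and adds nothing problematic.
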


\begin{example}
  $\Xru$ need not agree with~$X^\delta$; just take any Banach lattice
  which is not order complete. Uniform completion of a normed lattice
  need not agree with its norm completion: take~$c_{00}$ (which is
  uniformly complete) viewed as a norm dense subspace of~$c_0$. We
  will characterize in Theorem~\ref{ncompl} when uniform completion
  agrees with norm completion.
\end{example}

\begin{corollary}\label{hier}
  If $X\subseteq Y\subseteq\Xru$ then $\Yru=\Xru$.
\end{corollary}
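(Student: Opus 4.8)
The plan is to exhibit $\Yru$ as a sublattice of $X^\delta$ and show it coincides with $\Xru$. First I would note that since $X\subseteq Y\subseteq\Xru\subseteq X^\delta$ and $X^\delta$ is order complete, hence uniformly complete, $X^\delta$ serves as an order-complete overlattice of $Y$ as well; moreover $Y$ is majorizing in $X^\delta$ because $X$ already is (by Proposition~\ref{ru-maj}, $X$ is majorizing in $\Xru$, and $\Xru$ is majorizing — indeed a sublattice of — $X^\delta$, so $X$ and a fortiori $Y$ are majorizing in $X^\delta$). Consequently $Y$ is order dense in $X^\delta$, so $X^\delta$ is (a copy of) the order completion $Y^\delta$ of $Y$. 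By Proposition~\ref{Xru-cl-Xdelta} applied to $Y$, we therefore have $\Yru=\ucl{Y}^{\,X^\delta}$, the uniform closure of $Y$ in $X^\delta$; and likewise $\Xru=\ucl{X}^{\,X^\delta}$.

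Now it remains to show $\ucl{X}^{X^\delta}=\ucl{Y}^{X^\delta}$. The inclusion $\ucl{X}^{X^\delta}\subseteq\ucl{Y}^{X^\delta}$ is immediate from $X\subseteq Y$ and monotonicity of uniform closure. For the reverse inclusion, since $\Xru=\ucl{X}^{X^\delta}$ is uniformly closed in $X^\delta$ and contains $Y$ (as $Y\subseteq\Xru$), it is a uniformly closed set containing $Y$, hence contains $\ucl{Y}^{X^\delta}$ by minimality of the uniform closure. Thus $\ucl{Y}^{X^\delta}\subseteq\ucl{X}^{X^\delta}$, and the two uniform closures agree. Combining, $\Yru=\ucl{Y}^{X^\delta}=\ucl{X}^{X^\delta}=\Xru$.

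The only point requiring care — and the step I would expect to be the main (though minor) obstacle — is the identification of $X^\delta$ with $Y^\delta$, i.e. justifying that the order completion is insensitive to passing to an order-dense majorizing sublattice. This rests on the standard fact that if $Y$ is order dense in an order-complete vector lattice $W$, then $W$ is the order completion of $Y$; here one applies it with $W=X^\delta$, using that $X$ order dense and majorizing in $X^\delta$ forces $Y$ (sandwiched between them) to be order dense in $X^\delta$ as well. Once this identification is in hand, the argument above is purely formal, reducing everything to monotonicity and minimality of the uniform closure operator.
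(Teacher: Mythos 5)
Your proposal is correct and follows essentially the same route as the paper: identify $Y^\delta$ with $X^\delta$ (using that $Y$, sandwiched between $X$ and $X^\delta$, is order dense and majorizing in $X^\delta$) and then conclude via Proposition~\ref{Xru-cl-Xdelta} together with monotonicity and minimality of the uniform closure. One small caution: the ``standard fact'' you cite should require $Y$ to be majorizing in $W$ as well --- order density alone does not identify $W$ with $Y^\delta$ (e.g.\ $c_{00}$ is order dense in the order complete lattice $\ell_\infty$, yet $c_{00}^\delta=c_{00}$) --- but since you explicitly establish that $Y$ is majorizing in $X^\delta$, your argument goes through.
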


\begin{proof}
  It follows from $X\subseteq Y\subseteq X^\delta$ that
  $Y^\delta=X^\delta$. The result now follows
  from Proposition~\ref{Xru-cl-Xdelta}.
\end{proof}

\section{Universal properties.}

Recall that a linear operator $T\colon X\to Y$ between vector lattices
is order bounded iff it is (relatively) uniformly continuous in the
sense that $x_\alpha\goesu x$ implies $Tx_\alpha\goesu Tx$ for every
net $(x_\alpha)$ in~$X$; see, e.g., Theorem~10.3
in~\cite{OBrien:23}. It follows that pre-images of uniformly closed
sets under $T$ are uniformly closed.

Recall also that lattice homomorphisms are exactly the positive
disjointness preserving operators. A linear operator $T$ between
vector lattices is disjointness preserving iff
$\abs{Tx}=\bigabs{T\abs{x}}$ for every $x$. Furthermore, if $T$ is order
bounded and disjointness preserving then $\abs{T}$ exists and satisfies
$\abs{Tx}=\abs{T}\abs{x}$ for all~$x$; see, e.g.,~\cite{Duhoux:82}.

We will consider several classes of operators between vector lattices,
e.g., order bounded operators, order continuous operators, lattice
homomorphisms, etc. We say that the \term{universal property for uniform
  completions} is satisfied for a class of operators if every operator
$T\colon X\to Z$ in the class extends uniquely to an operator
$\widehat{T}\colon\Xru\to Z$ in the same class, provided that $X$ and
$Z$ are vector lattices and $Z$ is
uniformly complete.  The goal of this section is to show that the
universal property is satisfied for many natural classes of
operators. This will be achieved by applying the following theorem in the
special case $Y=\Xru$:

\begin{theorem}\label{univ-ob}
  Suppose that $X$ is a majorizing sublattice of a vector
  lattice~$Y$, and $T\colon X\to Z$ is an order bounded operator from
  $X$ to a uniformly complete vector lattice~$Z$. Then $T$
  extends uniquely to an order bounded operator
  $\widehat{T}\colon\ucl{X}\to Z$, where $\ucl{X}$ is the uniform
  closure of $X$ in~$Y$.
\end{theorem}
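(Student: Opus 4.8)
The plan is to treat uniqueness first, and then existence by transfinite induction along the adherence hierarchy, using that the uniform closure $\ucl X$ of $X$ in $Y$ equals $\uadh X{\omega_1}$. For uniqueness, recall that an order bounded operator between vector lattices is uniformly continuous, so pre-images of uniformly closed sets are uniformly closed. If $\widehat T_1,\widehat T_2\colon\ucl X\to Z$ are two order bounded extensions of $T$, then $D\coloneqq\widehat T_1-\widehat T_2$ is order bounded, so $D^{-1}(\{0\})$ is a uniformly closed subset of $\ucl X$ containing~$X$. Since $X$ is majorizing in~$Y$, Proposition~\ref{maj-cl-in-cl} says that the uniform closure of $X$ in $\ucl X$ is $\ucl X$ itself; hence $D^{-1}(\{0\})=\ucl X$, i.e. $\widehat T_1=\widehat T_2$.

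The engine of the existence proof is a one-step extension claim: if $V$ is a majorizing sublattice of $Y$ and $S\colon V\to Z$ is order bounded, then $S$ extends to an order bounded operator $\widehat S$ on the uniform adherence $\uadh V1$ of $V$ in~$Y$. Given $w\in\uadh V1$, choose a sequence $(x_n)$ in $V$ with $x_n\goesur e w$ in~$Y$; using that $V$ is majorizing we may take $e\in V_+$, so that $(x_n)$ is uniformly Cauchy in~$V$. As $S$ is order bounded it carries $[-e,e]_V$ into an order interval of~$Z$, so $(Sx_n)$ is uniformly Cauchy in~$Z$ and hence convergent, since $Z$ is uniformly complete; set $\widehat S w\coloneqq\lim Sx_n$. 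Interleaving two such approximating sequences (majorization again supplies a common regulator) shows the limit does not depend on the chosen sequence, while linearity and $\widehat S|_V=S$ follow by using constant sequences and the fact that $\uadh V1$ is a sublattice.

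For order boundedness I will carry a stronger invariant. Fix, for each $u\in X_+$, a vector $c_u\in Z_+$ with $T\bigl([-u,u]_X\bigr)\subseteq[-c_u,c_u]$. I claim that for every ordinal $\kappa$ there is an order bounded operator $\widehat T_\kappa\colon\uadh X\kappa\to Z$ extending $T$, restricting to $\widehat T_\iota$ for each $\iota<\kappa$, and satisfying $\widehat T_\kappa\bigl([0,w]_{\uadh X\kappa}\bigr)\subseteq[-c_u,c_u]$ whenever $0\le w\le u$ with $u\in X_+$. For $\kappa=0$ take $\widehat T_0=T$, where the invariant is immediate. At a successor $\kappa=\iota+1$ let $\widehat T_\kappa$ be the one-step extension of $\widehat T_\iota$ (note $\uadh X\iota$ contains $X$, hence is majorizing in~$Y$); to check the invariant, given $v\in[0,w]_{\uadh X\kappa}$ and $u\in X_+$ with $w\le u$, approximate $v$ uniformly by a sequence $(v_k)$ in $\uadh X\iota$ and replace each $v_k$ by its truncation $(v_k\vee0)\wedge u\in[0,u]_{\uadh X\iota}$; this still converges uniformly to $v$ because $\bigabs{(v_k\vee0)\wedge u-v}\le\abs{v_k-v}$ when $0\le v\le u$. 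Then $\widehat T_\kappa v=\lim\widehat T_\iota\bigl((v_k\vee0)\wedge u\bigr)$ lies in $[-c_u,c_u]$, by the inductive bound applied with $u$ in place of $w$ and the uniform closedness of order intervals in~$Z$. At a limit $\kappa$ set $\widehat T_\kappa=\bigcup_{\iota<\kappa}\widehat T_\iota$; any $v\in[0,w]_{\uadh X\kappa}$ already lies in some $[0,w]_{\uadh X\iota}$ with $\iota<\kappa$, so the invariant is inherited. Because $X$ is majorizing in $Y$, hence in $\ucl X$, every $0\le w\in\ucl X$ is dominated by some $u\in X_+$, so $\widehat T\coloneqq\widehat T_{\omega_1}\colon\ucl X\to Z$ is genuinely order bounded; it extends $T$ and, by the first part, is the unique such extension.

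I expect the limit-ordinal stage to be the one subtle point: a naive induction would bound $\widehat T_{\iota+1}$ on order intervals only in terms of $\widehat T_\iota$ on order intervals of $\uadh X\iota$, and these bounds could in principle grow without control up the tower, so that the union at a limit ordinal need not be order bounded. Anchoring all the estimates to order intervals of the fixed lattice~$X$ — possible precisely because $X$ is majorizing in~$Y$ — makes the bound stage-independent and removes this difficulty. The remaining ingredients (uniform continuity of order bounded operators, uniform closedness of order intervals and of $\{0\}$ by the Archimedean property, non-expansiveness of the truncation $v\mapsto(v\vee0)\wedge u$, and that uniform adherences of sublattices are sublattices) are routine and already recorded in Sections~2 and~5.
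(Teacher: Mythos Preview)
Your argument is correct, and it follows a genuinely different path from the paper's. The paper's proof of existence is non-constructive and very short: it embeds $Z$ in its Dedekind completion $Z^\delta$, applies the Kantorovich Extension Theorem to the positive and negative parts of $JT\colon X\to Z^\delta$ to obtain a regular extension $\widehat T\colon\ucl X\to Z^\delta$, and then uses that $Z$ is uniformly closed in $Z^\delta$ (Proposition~\ref{u-compl-closed}) together with uniform continuity of $\widehat T$ to conclude that $\widehat T^{-1}(Z)\supseteq\ucl X$, so $\widehat T$ actually lands in~$Z$. Your proof instead builds the extension explicitly by transfinite recursion along the adherence tower $\uadh X\kappa$, using uniform completeness of $Z$ directly at each successor stage and carrying the uniform bound anchored in~$X$ to keep order boundedness under control through limit ordinals. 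Your approach is more elementary and self-contained --- it avoids both Dedekind completion of the target and the Kantorovich theorem --- at the cost of being considerably longer; the paper's approach trades these auxiliary tools for a two-line existence argument. (One cosmetic remark: at the limit stage you write that $v\in[0,w]_{\uadh X\kappa}$ already lies in some $[0,w]_{\uadh X\iota}$; this is fine once you choose $\iota$ large enough to contain both $v$ and~$w$, which is possible since each lies in some $\uadh X{\iota'}$ with $\iota'<\kappa$.)
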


\begin{proof}
    By Proposition~\ref{maj-cl-in-cl} we may, without loss of
  generality, assume that $\ucl{X}=Y$.

  We first show uniqueness. Let $S,R\colon Y\to Z$ be two order
  bounded extensions of $T$. Then $S-R$ vanishes on~$X$, i.e.,
  $\ker(S-R)$ is a uniformly closed subspace of $Y$ containing $X$
  and, therefore,~$Y$. It follows that $S=R$.

  To prove existence, let $J\colon Z\hookrightarrow Z^\delta$ be the
  inclusion map. The composition $JT\colon X\to Z^\delta$ is order
  bounded, hence regular. Applying Kantorovich Extension Theorem (see
  Theorem~1.32 on~\cite{Aliprantis:06}) to $(JT)^+$ and $(JT)^-$, we
  conclude that $JT$ extends to a regular (hence, order bounded)
  operator $\widehat{T}\colon Y\to Z^\delta$.  Since $Z$ is uniformly
  complete, it is uniformly closed in $Z^\delta$ by
  Proposition~\ref{u-compl-closed}. It follows that
  $\widehat{T}^{-1}(Z)$ is uniformly closed in~$Y$. As it
  contains~$X$, we conclude that $\widehat{T}^{-1}(Z)=Y$ and,
  therefore, we may view $\widehat{T}$ as an operator from $Y$
  to~$Z$. This operator is still order bounded because $Z$ is
  majorizing in~$Z^\delta$.
\end{proof}

\begin{theorem}\label{univ-other}
  Let $Y$ and $Z$ be vector lattices, let $T\colon Y\to Z$
  be an order bounded operator, and let $X$ be a
  majorizing and uniformly dense (in the sense of closure) sublattice
  of~$Y$. For each of the following properties, $T$ has it iff
  $T_{|X}$ has it: positivity, order continuity,
  disjointness preserving, injectivity plus disjointness preserving,
  as well as any combinations of these properties (including lattice
  homomorphisms).
\end{theorem}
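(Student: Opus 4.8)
The plan is to dispose of the ``only if'' directions by inspection and to prove each ``if'' direction by a uniform-approximation argument resting on the fact, recalled at the start of this section, that an order bounded operator is relatively uniformly continuous, so that it pulls back uniformly closed sets to uniformly closed sets.

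For ``only if'': the restriction of a positive operator is positive; disjointness and the modulus of a single element are computed identically in a sublattice, so the restriction of a disjointness preserving operator is disjointness preserving; and injectivity passes to restrictions trivially. For order continuity one uses that $X$, being majorizing, is order dense in $Y$ by Proposition~\ref{maj-o-dense}, so a net decreasing to $0$ in $X$ still decreases to $0$ in $Y$; hence $T$ order continuous forces $T_{|X}$ order continuous.

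For the ``if'' directions I would express each property (other than order continuity) as a pointwise condition on $y\in Y$ that carves out a uniformly closed set containing $X$, which must then be all of $Y=\ucl{X}$. Positivity of $T$ is equivalent to $(Ty)^+\le T(y^+)$ for all $y$ (feeding $y\le 0$ into this recovers $T\ge 0$), and $\{y\in Y:(Ty)^+\le T(y^+)\}$ is uniformly closed because $y\mapsto Ty$ and $y\mapsto y^+$ are uniformly continuous and order intervals in $Z$ are uniformly closed by the Archimedean property. For ``disjointness preserving'' I would use the criterion $\abs{Ty}=\bigabs{T\abs{y}}$: if $y_\alpha\goesu y$ then $\abs{Ty_\alpha}\goesu\abs{Ty}$ and $\bigabs{T\abs{y_\alpha}}\goesu\bigabs{T\abs{y}}$, so uniqueness of uniform limits gives $\abs{Ty}=\bigabs{T\abs{y}}$; thus $\{y\in Y:\abs{Ty}=\bigabs{T\abs{y}}\}$ is uniformly closed and contains $X$. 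Intersecting these two sets takes care of all combinations of positivity and ``disjointness preserving'', in particular of lattice homomorphisms. Once $T$ is known to be disjointness preserving, $\abs{T}$ exists and satisfies $\abs{Tu}=\abs{T}\abs{u}$; if $Ty=0$ with $y\ne 0$, then $\abs{T}\abs{y}=0$, and, replacing $y$ by whichever of $y^+,y^-$ is nonzero, we may assume $y>0$, so there is $x\in X$ with $0<x\le y$ by order density, and $\abs{T_{|X}x}=\abs{T}x\le\abs{T}y=0$ gives $T_{|X}x=0$, contradicting injectivity of $T_{|X}$; this settles ``injectivity plus disjointness preserving''.

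The hard part is order continuity, since it is not a pointwise condition and the uniformly-closed-set method does not apply. I would route it through Dedekind completions. Since $X$ is order dense in $Y$, we have $Y^\delta=X^\delta$ and may view $Y$ as an order dense sublattice of $X^\delta$, while the canonical embedding $J\colon Z\hookrightarrow Z^\delta$ is order continuous. Then $JT_{|X}\colon X\to Z^\delta$ is order continuous, and, $Z^\delta$ being Dedekind complete, a classical extension theorem (an order continuous operator into a Dedekind complete vector lattice extends uniquely, order continuously, to the Dedekind completion of its domain; cf.~\cite{Aliprantis:06}) produces an order continuous $\widehat{S}\colon X^\delta\to Z^\delta$ extending it. Two order bounded, hence uniformly continuous, extensions from the uniformly dense sublattice $X$ must coincide---their difference has uniformly closed kernel containing $X$, hence all of $Y$, as in the proof of Theorem~\ref{univ-ob}---so $\widehat{S}$ restricts to $JT$ on $Y$; restriction to the order dense sublattice $Y$ of $X^\delta$ preserves order continuity, so $JT\colon Y\to Z^\delta$ is order continuous. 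Finally, $Z$ is a regular sublattice of $Z^\delta$: each element of $(Z^\delta)_+$ is the infimum of the elements of $Z$ above it, so a downward directed subset of $Z$ with infimum $0$ in $Z^\delta$ has infimum $0$ in $Z$, and this forces $Z$-valued nets to order converge in $Z$ precisely when they do in $Z^\delta$. Hence $y_\alpha\downarrow 0$ in $Y$ gives $JTy_\alpha\goeso 0$ in $Z^\delta$ and so $Ty_\alpha\goeso 0$ in $Z$, i.e., $T$ is order continuous. Combining this with the earlier cases covers every remaining combination of the listed properties.
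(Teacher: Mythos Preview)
Your argument is correct and follows essentially the same route as the paper: uniformly closed sets pulled back by $T$ for positivity and disjointness preserving, order density plus the modulus $\abs{T}$ for injectivity, and extension through $X^\delta\to Z^\delta$ for order continuity. The one place to tighten is your ``classical extension theorem'': Veksler's theorem (Theorem~1.65 in~\cite{Aliprantis:06}) extends \emph{positive} order continuous operators to the Dedekind completion, so the paper first applies Ogasawara's theorem (Theorem~1.57 in~\cite{Aliprantis:06}) to see that $(JT_{|X})^+$ and $(JT_{|X})^-$ are each order continuous, extends them separately via Veksler, and takes the difference. Your single-step appeal to an extension result for general order continuous operators sweeps this decomposition under the rug; the fix is immediate, but it should be made explicit. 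Your variant criterion $(Ty)^+\le T(y^+)$ for positivity is a harmless cosmetic difference from the paper's direct use of $T^{-1}Z_+$.
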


\begin{proof}
  By Proposition~\ref{maj-o-dense}\eqref{maj-o-dense-clo}, $X$ is
  order dense in $Y$ and, therefore, we may view $Y$ as a sublattice
  of~$X^\delta$. It is obvious
  that positivity, disjointness preserving, and injectivity pass from
  $T$ to~$T_{|X}$. If $T$ is order continuous then so
  is $T_{|X}$ because $T_{|X}=Tj$, where $j\colon X\hookrightarrow Y$
  is the inclusion, and $j$ is order continuous because $X$ is order
  dense and, therefore, regular in~$Y$.

  If $T_{|X}$ is positive then $TX_+\subseteq Z_+$, so that
  $T^{-1}Z_+$ is a uniformly closed subset of $Y$ containing~$X$. As
  $Y_+$ is the uniform closure of $X_+$ in~$Y$, we conclude that
  $Y_+\subseteq T^{-1}Z_+$, hence $T$ is positive.

  Suppose $T_{|X}$ is disjointness-preserving. Define a map
  $\Phi\colon Y\to Z$ via
  \begin{math}
    \Phi(y)=\abs{Tx}-\abs{T}\abs{x}.
  \end{math}
  Since $T$ is uniformly continuous, so is~$\Phi$. It
  follows that $\ker\Phi$ is uniformly closed. Since $\ker\Phi$
  contains~$X$, we conclude that $\Phi$ is identically zero on $Y$ and,
  therefore, $T$ is disjointness preserving.

  Suppose that $T_{|X}$ is disjointness preserving and injective. By the
  preceding, $T$ is disjointness preserving.
  Let $0\ne y\in Y$. Since $X$ is order dense in $Y$ by
  Proposition~\ref{maj-o-dense}, there exists $x\in X$ such that
  $0<x\le\abs{y}$. Using~\cite{Duhoux:82} again, we get
  \begin{displaymath}
    0<\abs{Tx}
    =\bigabs{T}x\le\bigabs{T}\abs{y}
    =\bigabs{Ty}.
  \end{displaymath}
  It follows that $Ty\ne 0$.

  Suppose that $T_{|X}$ is order continuous. Since the inclusion
  $J\colon Z\hookrightarrow Z^\delta$ is order continuous, so is
  $JT_{|X}$. By Ogasawara's Theorem (see, e.g., Theorem~1.57
  in~\cite{Aliprantis:06}), $(JT_{|X})^+$ and $(JT_{|X})^-$ are order
  continuous. By Veksler's Theorem (see, e.g., Theorem~1.65
  in~\cite{Aliprantis:06}), they extend to order continuous operators
  from $X^\delta$ to~$Z^\delta$, hence the difference of these
  extensions yields an order continuous extension of $JT_{|X}$ to some
  $S\colon X^\delta\to Z^\delta$. Since $S_{|Y}$ and $JT$ are two
  uniformly continuous operators from $Y$ to $Z^\delta$ which agree on~$X$,
  we have $S_{|Y}=JT$. As order convergences on $Y$ and $Z$ agree with
  those on $X^\delta$ and~$Z^\delta$, respectively
  (see~\cite{Abramovich:05} or Corollary~2.9 in~\cite{Gao:17}), we
  conclude that $JT$ is order continuous as an operator from $Y$ to
  $Z^\delta$ and, therefore, $T$ is order continuous as an operator
  from $Y$ to~$Z$.
\end{proof}

Applying Theorems~\ref{univ-ob} and~\ref{univ-other} in the special
case $Y=\Xru$ yields the main result of this section:

\begin{theorem}\label{univ}
  The universal property for uniform completions is satisfied for the
  following classes of operators: order bounded, order continuous,
  positive, order bounded disjointness preserving, and injective order
  bounded disjointness preserving operators, as well as any
  intersection of these classes (including lattice homomorphisms).
\end{theorem}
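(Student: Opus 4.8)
The plan is to assemble Theorems~\ref{univ-ob} and~\ref{univ-other} with the ambient space taken to be $Y=\Xru$, invoking the structural facts about $\Xru$ from Section~4. Two ingredients are needed: that $X$ is majorizing in $\Xru$ (Proposition~\ref{ru-maj}), and that $X$ is uniformly dense in $\Xru$ in the sense that the uniform closure of $X$ in $\Xru$ is all of $\Xru$ (Corollary~\ref{ru-clos-ru}). Recall also that $\Xru$ is uniformly complete by construction.

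First I would handle the class of order bounded operators. Given an order bounded $T\colon X\to Z$ with $Z$ uniformly complete, Theorem~\ref{univ-ob} applied with $Y=\Xru$ --- so that $\ucl{X}=\Xru$ by Corollary~\ref{ru-clos-ru} --- produces a unique order bounded extension $\widehat{T}\colon\Xru\to Z$. This settles the order bounded case and, simultaneously, yields uniqueness for every smaller class in the statement: all of the listed classes consist of order bounded operators, so any two extensions lying in such a class are in particular order bounded extensions, hence equal.

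It then remains to check that $\widehat{T}$ inherits whatever additional structure $T$ possesses. For this I would invoke Theorem~\ref{univ-other} with $Y=\Xru$ and the order bounded operator $\widehat{T}\colon\Xru\to Z$, whose restriction to $X$ is $T$: since $X$ is majorizing and uniformly dense in $\Xru$, the operator $\widehat{T}$ is positive, order continuous, disjointness preserving, or injective-plus-disjointness-preserving --- or satisfies any conjunction of these --- precisely when $T=\widehat{T}_{|X}$ does. Hence if $T$ lies in one of the listed classes, so does $\widehat{T}$, and it is the unique such extension. Since lattice homomorphisms are exactly the positive disjointness preserving operators, this covers that case as well.

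Because every step is a direct citation of an earlier result, there is no genuine obstacle here; the only point requiring care is the matching of hypotheses --- confirming that the "uniformly dense in the sense of closure" condition in Theorem~\ref{univ-other} is exactly the content of Corollary~\ref{ru-clos-ru}, and that the order bounded extension furnished by Theorem~\ref{univ-ob} is precisely the operator $\widehat T$ to which Theorem~\ref{univ-other} is then applied.
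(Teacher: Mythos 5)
Your proposal is correct and follows exactly the paper's route: the paper proves Theorem~\ref{univ} precisely by applying Theorems~\ref{univ-ob} and~\ref{univ-other} in the special case $Y=\Xru$, with the hypotheses supplied by Proposition~\ref{ru-maj} and Corollary~\ref{ru-clos-ru} as you indicate. Your write-up merely makes explicit the hypothesis-matching that the paper leaves implicit.
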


We will later show in Example~\ref{ex:pos-inj} that Theorem~\ref{univ}
fails for the class of injective operators: there exists a
positive injective order continuous operator whose extension fails to
be injective.

\begin{question}
  Does the universal property hold for sequentially uniformly
  continuous operators?
\end{question}

For each class of operators in Theorem~\ref{univ}, $\Xru$ is the
unique space that satisfies the universal property in the following
sense. Let $j\colon X\hookrightarrow\Xru$ be the inclusion map. Note
that $j$ itself belongs to all the classes. Suppose also that $X$ is a
sublattice of some uniformly complete vector lattice $Z$ such that the
pair $(X,Z)$ satisfies the universal property for some class; let
$i\colon X\hookrightarrow Z$ be the inclusion map. Suppose that $i$
itself belongs to the class. Let $\widehat{j}\colon Z\to\Xru$ and
$\widehat{i}\colon\Xru\to Z$ be the extensions given by the universal
properties. Then $\widehat{j}\widehat{i}\colon\Xru\to\Xru$
extends~$j$, hence, by uniqueness,
$\widehat{j}\widehat{i}=id_{\Xru}$. Analogously,
$\widehat{i}\widehat{j}=id_Z$. It follows that $Z$ is lattice
isomorphic to~$\Xru$.

In~\cite{Aliprantis:84}, uniform completion is defined via the
universal property for lattice homomorphisms. We now see that this
definition is equivalent to the earlier one.

Connections between uniform completions and the universal property for
order bounded operators were investigated in~\cite{Triki:02,Buskes:16}.

\section{$\Xru$ is the ``least'' uniformly complete vector lattice
  containing~$X$.}

We will now deduce from Theorem~\ref{univ} that $\Xru$ is, in a
certain sense, the least uniformly complete vector lattice
containing~$X$. Suppose that $X$ is a sublattice of a uniformly
complete vector lattice~$Z$. Let $J\colon X\hookrightarrow Z$ be the
inclusion map. By Theorem~\ref{univ}, it extends to an injective
lattice homomorphism $\widehat{J}\colon\Xru\to Z$. It follows that
$\Range\widehat{J}$ is a uniformly complete sublattice of $Z$
containing~$X$. Furthermore, if $X\subseteq Y\subseteq Z$ for some
uniformly complete vector lattice $Y$, we may view $J$ as a map from
$X$ to~$Y$. It is now easy to see that $\Range\widehat{J}$ is
contained in~$Y$. We can summarize this as follows:

\begin{theorem}\label{Xru-int-Z}
  Let $X$ be a sublattice of a uniformly complete vector
  lattice~$Z$. Then $\Xru$ is lattice isomorphic to the intersection
  of all uniformly complete sublattices of $Z$ containing~$X$;
  moreover, the isomorphism preserves~$X$.
\end{theorem}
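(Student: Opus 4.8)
The plan is to package the construction carried out in the paragraph immediately preceding the theorem statement into a clean proof, and then to verify the two claims in the statement: that $\Range\widehat J$ really is (lattice isomorphic to) the intersection $W$ of all uniformly complete sublattices of $Z$ containing $X$, and that the isomorphism fixes $X$ pointwise. First I would recall the setup: let $J\colon X\hookrightarrow Z$ be the inclusion, and apply Theorem~\ref{univ} (universal property for injective lattice homomorphisms — note $J$ is itself an injective lattice homomorphism) to obtain an injective lattice homomorphism $\widehat J\colon\Xru\to Z$ extending $J$. Since $\widehat J$ is an injective lattice homomorphism, it is a lattice isomorphism onto its range, and since $\Xru$ is uniformly complete, $\Range\widehat J$ is a uniformly complete sublattice of $Z$; it contains $\widehat J(X)=J(X)=X$. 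Thus $\Range\widehat J$ is one of the sublattices in the family defining $W$, whence $W\subseteq\Range\widehat J$.

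For the reverse inclusion I would argue exactly as in the pre-theorem paragraph: let $Y$ be any uniformly complete sublattice of $Z$ with $X\subseteq Y$. View $J$ as the inclusion $X\hookrightarrow Y$; since $Y$ is uniformly complete, Theorem~\ref{univ} gives an (injective lattice homomorphism) extension $\widehat J_Y\colon\Xru\to Y$. Composing with the inclusion $Y\hookrightarrow Z$ yields an injective lattice homomorphism $\Xru\to Z$ extending $J$, and by the uniqueness clause of the universal property this composition equals $\widehat J$. Hence $\Range\widehat J=\Range\widehat J_Y\subseteq Y$. Since $Y$ was arbitrary, $\Range\widehat J\subseteq W$. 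Combining the two inclusions, $\Range\widehat J=W$, and $\widehat J$ restricts to a lattice isomorphism $\Xru\to W$. Finally, $\widehat J$ fixes $X$ because it extends $J$, which is the identity on $X$; this gives the ``preserves $X$'' clause.

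I do not expect a genuine obstacle here — the theorem is essentially a corollary of Theorem~\ref{univ} together with Proposition~\ref{ucompl-int} (which guarantees $W$ itself is uniformly complete, though strictly speaking we only need that each $Y$ in the family is uniformly complete, which is given). The one point that needs a little care is the uniqueness step: to invoke it I must check that $\widehat J$ and the composite $\Xru\xrightarrow{\widehat J_Y}Y\hookrightarrow Z$ are both extensions of $J$ \emph{in the same class} of operators for which the universal property holds — the class of injective lattice homomorphisms — and that the inclusion $Y\hookrightarrow Z$ and the composite indeed lie in that class (a composition of injective lattice homomorphisms is an injective lattice homomorphism), so that Theorem~\ref{univ}'s uniqueness applies. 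A minor subtlety worth a sentence: since $\widehat J$ need not be surjective, ``$\Xru$ is lattice isomorphic to $W$'' should be understood as ``$\widehat J$ is a lattice isomorphism of $\Xru$ onto $W$'', which is exactly the conclusion we obtain. I would also remark in passing that Proposition~\ref{ucompl-int} shows $W$ is itself a uniformly complete sublattice of $Z$ containing $X$ (indeed the least one), consistent with $W=\Range\widehat J$.

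\begin{proof}
  Let $J\colon X\hookrightarrow Z$ be the inclusion map; it is an
  injective lattice homomorphism. By Theorem~\ref{univ}, $J$ extends
  to an injective lattice homomorphism $\widehat J\colon\Xru\to Z$.
  Since $\Xru$ is uniformly complete and $\widehat J$ is a lattice
  isomorphism onto its range, $\Range\widehat J$ is a uniformly
  complete sublattice of~$Z$; moreover,
  $\widehat J(X)=J(X)=X$, so $\Range\widehat J$ contains~$X$.

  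Let $W$ be the intersection of all uniformly complete sublattices
  of $Z$ containing~$X$; by Proposition~\ref{ucompl-int}, $W$ is
  itself a uniformly complete sublattice of $Z$ containing~$X$. Since
  $\Range\widehat J$ is one member of this family, $W\subseteq\Range\widehat J$.
  Conversely, let $Y$ be any uniformly complete sublattice of $Z$ with
  $X\subseteq Y$. Viewing $J$ as the inclusion $X\hookrightarrow Y$
  and applying Theorem~\ref{univ} again, we obtain an injective
  lattice homomorphism $\widehat J_Y\colon\Xru\to Y$ extending~$J$.
  Let $\iota\colon Y\hookrightarrow Z$ be the inclusion. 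Then
  $\iota\widehat J_Y\colon\Xru\to Z$ is an injective lattice
  homomorphism extending~$J$. By the uniqueness part of the universal
  property, $\iota\widehat J_Y=\widehat J$, whence
  $\Range\widehat J=\Range\widehat J_Y\subseteq Y$. As $Y$ was an
  arbitrary member of the family, $\Range\widehat J\subseteq W$.

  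Therefore $\Range\widehat J=W$, and $\widehat J$ is a lattice
  isomorphism from $\Xru$ onto~$W$. Finally, $\widehat J$ restricts to
  the identity on $X$ because it extends~$J$, so the isomorphism
  preserves~$X$.
\end{proof}
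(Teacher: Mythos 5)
Your proof is correct and follows essentially the same route as the paper: extend the inclusion $J\colon X\hookrightarrow Z$ via Theorem~\ref{univ} to an injective lattice homomorphism $\widehat J$, observe $\Range\widehat J$ is a uniformly complete sublattice containing $X$, and use the extension into an arbitrary uniformly complete $Y\supseteq X$ together with uniqueness to get $\Range\widehat J\subseteq Y$. You merely spell out the uniqueness/composition step that the paper leaves as ``it is now easy to see.''
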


Thus, in our definition of $\Xru$, one may replace $X^\delta$ with any
uniformly complete vector lattice containing~$X$. Combining this with
Proposition~\ref{u-compl-closed}, we get:

\begin{corollary}\label{ru-amb}
  If $X$ is a majorizing sublattice of a uniformly complete vector
  lattice $Y$ then $\Xru$ is lattice isomorphic to the uniform closure
  of $X$ in~$Y$.
\end{corollary}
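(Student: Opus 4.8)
The plan is to read off the result from Theorem~\ref{Xru-int-Z} together with Proposition~\ref{u-compl-closed}. First I would apply Theorem~\ref{Xru-int-Z} with the ambient space taken to be $Y$ itself (which is legitimate since $Y$ is uniformly complete and contains $X$): this produces a lattice isomorphism, preserving $X$, between $\Xru$ and the intersection $W$ of all uniformly complete sublattices of $Y$ containing~$X$. Everything then reduces to the purely order-theoretic claim that $W$ coincides with the uniform closure $\ucl{X}$ of $X$ in~$Y$.

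To establish that claim, the key observation is that ``majorizing'' is inherited by intermediate sublattices: if $X\subseteq W'\subseteq Y$ is a sublattice, then $W'$ is majorizing in $Y$ because $X$ already is. Hence, by Proposition~\ref{u-compl-closed}, such a $W'$ is uniformly complete if and only if it is uniformly closed in~$Y$. Therefore the family whose intersection defines $W$ is exactly the family of all uniformly closed sublattices of $Y$ containing $X$, and (recalling from the preliminaries that $\ucl{X}$ is itself a sublattice and is the least uniformly closed set containing~$X$) the intersection of that family is precisely $\ucl{X}$. In particular $\ucl{X}$ is one of the members of the family, so $W=\ucl{X}$, and composing with the isomorphism of Theorem~\ref{Xru-int-Z} gives the desired lattice isomorphism $\Xru\cong\ucl{X}$ fixing~$X$.

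I do not anticipate any genuine obstacle here; the only thing to verify is the easy inheritance of the majorizing property, and both substantive ingredients (Theorem~\ref{Xru-int-Z} and Proposition~\ref{u-compl-closed}) are already available. If one prefers a self-contained argument that does not route through Theorem~\ref{Xru-int-Z}, one can instead invoke Theorem~\ref{univ} to extend the inclusion $X\hookrightarrow Y$ to an injective lattice homomorphism $\widehat{J}\colon\Xru\to Y$: then $\Range\widehat{J}$ is a uniformly complete, hence (being majorizing) uniformly closed, sublattice of $Y$ containing $X$, so $\ucl{X}\subseteq\Range\widehat{J}$; conversely, since $\widehat{J}$ is order bounded, $\widehat{J}^{-1}(\ucl{X})$ is uniformly closed in $\Xru$ and contains $X$, hence equals $\Xru$ by Corollary~\ref{ru-clos-ru}, giving $\Range\widehat{J}\subseteq\ucl{X}$. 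Thus $\widehat{J}$ is a lattice isomorphism from $\Xru$ onto $\ucl{X}$ preserving~$X$.
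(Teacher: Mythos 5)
Your main argument is exactly the paper's route: the corollary is stated there as an immediate consequence of Theorem~\ref{Xru-int-Z} combined with Proposition~\ref{u-compl-closed}, and you correctly supply the small details the paper leaves implicit (every intermediate sublattice inherits the majorizing property, so ``uniformly complete'' and ``uniformly closed'' pick out the same family, whose intersection is $\ucl{X}$). Your alternative argument via $\widehat{J}$ and Corollary~\ref{ru-clos-ru} is also sound, but it essentially re-derives Theorem~\ref{Xru-int-Z} in this special case, so it is not a genuinely different approach.
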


Example~\ref{ex:c00} shows that the the majorizing assumption cannot be
dropped.

\begin{remark}\label{str-un-ru-compl}
  Let $X$ be a vector lattice with a strong unit~$e$. By
  Krein-Kakutani's representation theorem,
  $\bigl(X,\norm{\cdot}_e\bigr)$ embeds isometrically as a dense
  sublattice into a $C(K)$ space, with $e$ becoming~$\one$. Since
  $C(K)$ is uniformly complete and $X$ is uniformly dense and
  majorizing in $C(K)$, it follows from Corollary~\ref{ru-amb} that
  $C(K)$ is~$\Xru$. We conclude that $\Xru$ is the norm completion of
  $\bigl(X,\norm{\cdot}_e\bigr)$. It also follows that the uniform
  closure and the uniform adherence of $X$ in the norm completion
  agree.
\end{remark}

\begin{example}\label{loc-const}
  Let $Y=C[0,1]$; let $X$ be the set of all functions in $Y$ that are
  constant on $[0,\varepsilon)$ for some $\varepsilon>0$. Being a
  Banach lattice, $Y$ is uniformly complete. It is easy to see that
  $X$ is a majorizing sublattice, and that the uniform closure of $X$
  in $Y$ is all of~$Y$. Hence, we may identify $\Xru$ with $Y$. Note
  that $Y$ is not order complete.
\end{example}

\begin{example}\label{ex:piece-wise-ru}
  Let $X$ be the sublattice of $C[0,1]$ consisting of all piece-wise
  affine functions. Then $\Xru=C[0,1]$.
\end{example}

\begin{example}
  Let now $X$ be the space of all piece-wise affine
  functions in $C(\mathbb R)$; we will show that $\Xru$ is the space
  \begin{displaymath}
    Y=\Bigl\{f\in C(\mathbb R)\mid
    \lim\limits_{t\to\pm\infty}\frac{f(t)}{t}\mbox{ exists }\Bigr\}.
  \end{displaymath}
  It is clear that $X\subset Y$. Let
  \begin{displaymath}
    Z=\Bigl\{f\in C(\mathbb R)\mid
    \lim\limits_{t\to\pm\infty} f(t)\mbox{ exists }\Bigr\}.
  \end{displaymath}
  Clearly, $Z$ may be identified with $C(\overline{\mathbb R})$. Being
  a $C(K)$ space, $Z$ is an AM-space under
  $\norm{\cdot}_{\one}$. Define $u\in C(\mathbb R)$ and
  $T\colon Z\to Y$ via $u(t)=\abs{t}\vee 1$ and $Tf=uf$. It is easy to
  see that $T$ is a surjective lattice isomorphism. In particular,
  $T\one=u$ and $T$ is a $\norm{\cdot}_{\one}$-to-$\norm{\cdot}_u$
  isometry. It follows that $\bigl(Y,\norm{\cdot}_u\bigr)$ is a Banach
  lattice, hence it is uniformly complete.  Stone-Weierstrass Theorem
  yields that $T^{-1}(X)$ is $\norm{\cdot}_{\one}$-dense in $Z$,
  because we may view $T^{-1}(X)$ as a sublattice of
  $C(\overline{\mathbb R})$ which contains $\one$ and separates
  points.  Therefore, $X$ is $\norm{\cdot}_u$-dense in~$Y$. It now
  follows from Remark~\ref{str-un-ru-compl} that $\Xru=Y$.
\end{example}

\begin{example}
  It would be interesting to extend the preceding example to $\mathbb R^n$.
  It was shown in~\cite[Theorem~4.1]{Adeeb:03} that the space $LPA$ of
  all locally piece-wise affine functions is
  $\norm{\cdot}_{\one}$-dense in $C(\mathbb R^n)$; recall that a
  continuous real-valued function $f$ on $\mathbb R^n$ is said to be
  locally piece-wise affine if for every bounded convex subset
  $\Omega$ of $\mathbb R^n$ with non-empty interior there exist
  finitely many affine functions such that at every point of~$\Omega$,
  $f$ agrees with one of the affine functions. It follows from
  Corollary~\ref{ru-amb} that $LPA^{\rm ru}=C(\mathbb R^n)$.
\end{example}

\begin{example}
  The following example is motivated by the concepts of
  free vector lattices and free Banach lattices; see,
  e.g.,~\cite{dePagter:15,Taylor:24}.  Let $L_n$ be the space of all
  lattice linear functions in $C(\mathbb R^n)$, that is, functions
  that are obtained only using lattice and linear operations of the
  variables. Alternatively, $L_n$ is the sublattice of
  $C(\mathbb R^n)$ generated by the subspace of all linear
  functions. It is known that $L_n$ is (relatively) uniformly dense in
  the space $C_{\rm ph}(\mathbb R^n)$ of all continuous positively
  homogeneous real-valued functions on~$\mathbb R^n$. Indeed, the
  restriction map that sends $f\in C_{\rm ph}(\mathbb R^n)$ to its
  restriction to the unit sphere of $\ell^n_\infty$ is a lattice
  isomorphism between $C_{\rm ph}(\mathbb R^n)$ and
  $C(S_{\ell_\infty^n})$, and the image of $L_n$ under this
  restriction map is supremum-norm dense in $C(S_{\ell_\infty^n})$ by
  Stone-Weierstrass Theorem, hence uniformly dense. Hence, by
  Corollary~\ref{ru-amb}, $L_n^{\rm ru}=C_{\rm ph}(\mathbb R^n)$.

  One can identify $L_n$ and $C_{\rm ph}(\mathbb R^n)$ with $\FVL(n)$
  and $\FBL(n)$, respectively. So the preceding argument asserts that
  $\FVL(n)^{\rm ru}=\FBL(n)$. It is shown in~\cite{Emelyanov:24} that
  for an infinite $A$ one has $\FVL(A)^{\rm ru}\subsetneq\FBL(A)$.  It
  would be interesting to describe $\FVL(A)^{\rm ru}$ for an arbitrary
  set~$A$.
\end{example}

The following two statements are analogous to results in Section~2
of~\cite{Cernak:09}. It is straightforward that uniform convergence
passes down to projection bands.

\begin{proposition}\label{oplus-ru}
  $(X\oplus Y)^{\rm ru}=\Xru\oplus\Yru$
  for any two vector lattices $X$ and~$Y$. Furthermore,
  $\Xru\oplus 0$ is the uniform closure and the ideal generated by
  $X\oplus 0$ in $\Xru\oplus\Yru$.
\end{proposition}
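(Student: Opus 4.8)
The plan is to carry everything out inside the uniformly complete lattice $X^\delta\oplus Y^\delta$. First, since $X$ is a majorizing sublattice of $X^\delta$ and $Y$ of $Y^\delta$, the sublattice $X\oplus Y$ is majorizing in $X^\delta\oplus Y^\delta$, and the latter is order complete, hence uniformly complete; so Corollary~\ref{ru-amb} identifies $(X\oplus Y)^{\rm ru}$ (preserving $X\oplus Y$) with the uniform closure of $X\oplus Y$ in $X^\delta\oplus Y^\delta$. The engine of the argument is the elementary fact that uniform convergence in a direct sum is coordinatewise: because $\bigabs{(a,b)}=(\abs a,\abs b)$ and the order on $V_1\oplus V_2$ is coordinatewise, every regulator $w\in(V_1\oplus V_2)_+$ splits as $w=(u_1,u_2)$ with $u_i\in(V_i)_+$, and a net $\bigl((v^1_\alpha,v^2_\alpha)\bigr)$ satisfies $(v^1_\alpha,v^2_\alpha)\goesur{(u_1,u_2)}(v^1,v^2)$ if and only if $v^i_\alpha\goesur{u_i}v^i$ for $i=1,2$.

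From this I would first establish that $\uadh{A\oplus B}{1}=\uadh{A}{1}\oplus\uadh{B}{1}$ for any sublattices $A\subseteq V_1$ and $B\subseteq V_2$, the adherences on the right taken in $V_1$ and $V_2$. For ``$\subseteq$'', split the regulator of a uniformly convergent net in $A\oplus B$ and read off coordinatewise convergence; for ``$\supseteq$'', given $a\in\uadh{A}{1}$ and $b\in\uadh{B}{1}$, reduce to sequences $a_n\goesur{u}a$ in $A$ and $b_n\goesur{v}b$ in $B$ and note $(a_n,b_n)\goesur{(u,v)}(a,b)$ with $(a_n,b_n)\in A\oplus B$. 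A transfinite induction then gives $\uadh{A\oplus B}{\kappa}=\uadh{A}{\kappa}\oplus\uadh{B}{\kappa}$ for every ordinal $\kappa$: the successor step applies the displayed identity to $\uadh{A}{\iota}$ and $\uadh{B}{\iota}$, while the limit step uses that the adherences increase with $\kappa$, so that $\bigcup_{\iota<\kappa}\bigl(\uadh{A}{\iota}\oplus\uadh{B}{\iota}\bigr)=\bigl(\bigcup_{\iota<\kappa}\uadh{A}{\iota}\bigr)\oplus\bigl(\bigcup_{\iota<\kappa}\uadh{B}{\iota}\bigr)$. Taking $\kappa$ large enough yields $\ucl{A\oplus B}=\ucl{A}\oplus\ucl{B}$; applied with $A=X\subseteq X^\delta$ and $B=Y\subseteq Y^\delta$, this identifies the uniform closure of $X\oplus Y$ in $X^\delta\oplus Y^\delta$ with $\Xru\oplus\Yru$, hence $(X\oplus Y)^{\rm ru}=\Xru\oplus\Yru$.

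For the ``furthermore'' part I would run the same coordinatewise computation inside $\Xru\oplus\Yru$, now with the sublattices $X\subseteq\Xru$ and $\{0\}\subseteq\Yru$: the uniform closure of $X\oplus 0$ in $\Xru\oplus\Yru$ equals the uniform closure of $X$ in $\Xru$, direct-summed with $0$, which is $\Xru\oplus 0$ by Corollary~\ref{ru-clos-ru}. Finally, a pair $(z,w)\in\Xru\oplus\Yru$ lies in the ideal generated by the sublattice $X\oplus 0$ exactly when $\bigabs{(z,w)}\le(s,0)$ for some $s\in X_+$, i.e.\ when $w=0$ and $z$ belongs to the ideal generated by $X$ in $\Xru$; since $X$ is majorizing in $\Xru$ by Proposition~\ref{ru-maj}, that ideal is all of $\Xru$, so the ideal generated by $X\oplus 0$ is $\Xru\oplus 0$.

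The only genuinely delicate point is the bookkeeping of the ambient lattice — the coordinatewise description of uniform convergence together with the shape of its regulators — since, as the paper repeatedly stresses, uniform convergence depends on the ambient space; once that is nailed down the rest is routine, the limit-ordinal step of the transfinite induction (where monotonicity of the adherences is used) being the only place that calls for any care.
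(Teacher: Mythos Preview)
Your argument is correct, but it takes a genuinely different route from the paper's. You work concretely inside $X^\delta\oplus Y^\delta$, identify $(X\oplus Y)^{\rm ru}$ with the uniform closure there via Corollary~\ref{ru-amb}, and then compute that closure by showing coordinatewise that $\uadh{A\oplus B}{\kappa}=\uadh{A}{\kappa}\oplus\uadh{B}{\kappa}$ for every ordinal~$\kappa$, invoking Proposition~\ref{Xru-cl-Xdelta} at the end. The paper instead first observes that $\Xru\oplus\Yru$ is uniformly complete, then uses the universal property (Theorem~\ref{univ}) to extend the inclusion $X\oplus Y\hookrightarrow\Xru\oplus\Yru$ to an injective lattice homomorphism $\widehat{T}\colon(X\oplus Y)^{\rm ru}\to\Xru\oplus\Yru$, and checks surjectivity by showing $\Range\widehat{T}$ is uniformly closed (being uniformly complete and majorizing) and contains both $\Xru\oplus 0$ and $0\oplus\Yru$; the ``furthermore'' clause is handled via the remark that $\Xru\oplus 0$ is a projection band (hence uniformly closed, with uniform convergence passing down to it). Your approach is more elementary and self-contained---it avoids the universal property entirely and makes the coordinatewise structure explicit---while the paper's argument illustrates how the machinery of Section~5 can be put to work and would generalize more readily to settings where a hands-on closure computation is unavailable.
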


\begin{proof}
  Since $\Xru$ and $\Yru$ are uniformly complete, using the definition
  of uniform completeness via uniformly Cauchy nets one can show that
  $\Xru\oplus\Yru$ is uniformly complete. The ``furthermore'' clause
  follows from Corollary~\ref{ru-clos-ru} and the facts that $X$ is
  majorizing in $\Xru$ and $\Xru\oplus 0$ is a projection band in
  $\Xru\oplus\Yru$.

  Let $T\colon X\oplus Y\to\Xru\oplus\Yru$ be the canonical inclusion
  map. Let $\widehat{T}\colon(X\oplus Y)^{\rm ru}\to\Xru\oplus\Yru$ be
  its extension as in Theorem~\ref{univ}. Then $\widehat{T}$ is an
  injective lattice homomorphism. It is left to show that it is
  surjective. Since $\Range\widehat{T}$ is lattice isomorphic to
  $(X\oplus Y)^{\rm ru}$, it is uniformly complete. Since it
  contains $\Range T=X\oplus Y$, $\Range\widehat{T}$ is majorizing in
  $\Xru\oplus\Yru$. By Theorem~\ref{u-compl-closed}, it is uniformly
  closed. Since it contains $X\oplus 0$, it contains its closure
  $\Xru\oplus 0$. Similarly, $\Range\widehat{T}$ contains
  $0\oplus\Yru$. It follows that $\Range\widehat{T}=\Xru\oplus\Yru$.
\end{proof}



\begin{corollary}\label{proj-band-ru}
  If $Y$ is a projection band in $X$ then $\Yru$ equals the uniform
  closure of $Y$ in~$\Xru$; it also equals the ideal generated by $Y$
  in~$\Xru$. Furthermore, $\Yru$ is a projection band in~$\Xru$.
\end{corollary}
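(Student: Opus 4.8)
The plan is to deduce this directly from Proposition~\ref{oplus-ru} by splitting $X$ along its band decomposition. Since $Y$ is a projection band in $X$, we have the internal direct sum $X=Y\oplus Y^{d}$, where $Y^{d}$ is the disjoint complement of $Y$ in $X$; equivalently, $X$ is lattice isomorphic, via the identity on each summand, to the external direct sum $Y\oplus Y^{d}$ of the vector lattices $Y$ and $Y^{d}$. Note that Corollary~\ref{ru-amb} does not apply directly here: since $Y$ is an ideal in $X$, it is majorizing in $\Xru$ only if $Y=X$; this is why we route through Proposition~\ref{oplus-ru}.

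First I would apply Proposition~\ref{oplus-ru} with the pair $(Y,Y^{d})$ in place of $(X,Y)$, obtaining $\Xru=(Y\oplus Y^{d})^{\mathrm{ru}}=\Yru\oplus(Y^{d})^{\mathrm{ru}}$, the equality being realized (as the proof of Proposition~\ref{oplus-ru} shows) by a lattice isomorphism extending the canonical inclusion $Y\oplus Y^{d}\hookrightarrow\Yru\oplus(Y^{d})^{\mathrm{ru}}$; in particular it is the identity on $Y$, so under this identification the sublattice $Y\subseteq X\subseteq\Xru$ corresponds to $Y\oplus 0$, and $\Yru$ (viewed inside $\Xru$) corresponds to $\Yru\oplus 0$. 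Next I would quote the ``furthermore'' clause of Proposition~\ref{oplus-ru}, again with $(Y,Y^{d})$ in place of $(X,Y)$: the set $\Yru\oplus 0$ is the uniform closure of $Y\oplus 0$ in $\Yru\oplus(Y^{d})^{\mathrm{ru}}$ and is also the ideal generated by $Y\oplus 0$ there; moreover it is visibly a projection band in the direct sum $\Yru\oplus(Y^{d})^{\mathrm{ru}}$. Transporting these three statements back through the identification above yields precisely the three assertions of the corollary: $\Yru$ is the uniform closure of $Y$ in $\Xru$, it coincides with the ideal generated by $Y$ in $\Xru$, and it is a projection band in $\Xru$.

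The only delicate point is the bookkeeping: one must confirm that the isomorphism $(Y\oplus Y^{d})^{\mathrm{ru}}\cong\Yru\oplus(Y^{d})^{\mathrm{ru}}$ supplied by Proposition~\ref{oplus-ru} genuinely fixes $Y$, so that ``uniform closure of $Y$'', ``ideal generated by $Y$'', and the complemented-band structure are all carried across correctly. This is immediate, since that isomorphism was constructed as the extension of the canonical inclusion map; beyond this, the argument is purely formal and I anticipate no real obstacle.
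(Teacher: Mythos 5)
Your proposal is correct and takes essentially the same route the paper intends: the corollary is stated without proof immediately after Proposition~\ref{oplus-ru}, the implicit argument being exactly the band decomposition $X=Y\oplus Y^{d}$ followed by that proposition and its ``furthermore'' clause applied to the pair $(Y,Y^{d})$. Your bookkeeping point --- that the isomorphism $(Y\oplus Y^{d})^{\mathrm{ru}}\cong\Yru\oplus(Y^{d})^{\mathrm{ru}}$ extends the canonical inclusion and hence fixes $Y$, so the closure, the generated ideal, and the projection-band structure transfer --- is precisely the step that needs checking, and you handle it correctly.
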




In many categories, the completion of an object is a complete object
that contains the original objects as a dense sub-object. In
particular, if $X$ is a topological vector space, by its completion we
mean a complete topological vector space $Y$ which contains $X$ as a
linear and topological subspace which is dense in
$Y$. Part~\eqref{cl-compl-pass} of the next result may be viewed as an
analogue of this for uniform completeness of vector lattices.

\begin{proposition}\label{cl-compl}
  Let $Y$ be a uniformly complete vector lattice
  and $X$ a sublattice of $Y$ such that $\ucl{X}=Y$. TFAE:
  \begin{enumerate}
  \item\label{cl-compl-ru} $Y$ is isomorphic to $\Xru$ via an
    isomorphism preserving~$X$;
  \item\label{cl-compl-maj} $X$ is majorizing in~$Y$;
  \item\label{cl-compl-pass} Uniform convergence passes down from $Y$
    to~$X$.
  \end{enumerate}
\end{proposition}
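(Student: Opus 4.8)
The plan is to prove the cycle $\eqref{cl-compl-ru}\Rightarrow\eqref{cl-compl-maj}\Rightarrow\eqref{cl-compl-pass}\Rightarrow\eqref{cl-compl-ru}$, leaning heavily on the machinery already built in the paper.

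For $\eqref{cl-compl-ru}\Rightarrow\eqref{cl-compl-maj}$: if $Y$ is isomorphic to $\Yru$ via a map fixing $X$, then since $\ucl{X}=Y$ we have $\Yru=\Xru$ (one can invoke Corollary~\ref{hier} after noting $X\subseteq Y\subseteq\ucl{X}=Y\subseteq\Xru$, or argue directly). By Proposition~\ref{ru-maj}, $X$ is majorizing in $\Xru$, hence in $Y$.

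For $\eqref{cl-compl-maj}\Rightarrow\eqref{cl-compl-pass}$: this is immediate from Proposition~\ref{majorizing}: if $X$ is majorizing in $Y$, uniform convergence in $Y$ passes down to $X$.

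For $\eqref{cl-compl-pass}\Rightarrow\eqref{cl-compl-ru}$: this is the substantive direction. We have $\ucl{X}=Y$, so by Proposition~\ref{pass-down}, passing down of uniform convergence is equivalent to the ideal $I(X)$ generated by $X$ in $Y$ being a projection band. But since $\ucl{X}=Y$, the ideal $I(X)$ is uniformly dense in $Y$, and a projection band that is uniformly dense must be all of $Y$ (a band is uniformly closed, being an ideal that is order closed; more carefully, a projection band is uniformly closed — if $x_n\goesur{u}x$ with $x_n$ in the band $B$ and band projection $P$, then $Px=\lim Px_n=\lim x_n=x$ using that $P$ is uniformly continuous, being order bounded). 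Hence $I(X)=Y$, i.e., $X$ is majorizing in $Y$ (indeed every element of $Y_+$ lies in some principal ideal $I_u$ with $u\in X_+$, so is dominated by a multiple of $u\in X$). Now $Y$ is uniformly complete, $X$ is a majorizing sublattice, and $\ucl{X}=Y$; by Corollary~\ref{ru-amb}, $\Xru$ is lattice isomorphic to $\ucl{X}=Y$ via an isomorphism preserving $X$. Since $\ucl{X}=Y$ also gives $\Yru=\Xru$ (via Corollary~\ref{hier}, as $X\subseteq Y\subseteq\Xru$), we conclude $Y$ is isomorphic to $\Yru$ preserving $X$.

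The main obstacle I anticipate is the step asserting that a uniformly dense projection band must be the whole space — one must verify that a projection band is uniformly closed, which follows because the band projection is order bounded, hence uniformly continuous, so its kernel's complement (the band) is uniformly closed as a preimage; then uniform density forces the band to equal $Y$. Everything else is a bookkeeping assembly of Propositions~\ref{majorizing}, \ref{ru-maj}, \ref{pass-down}, Corollaries~\ref{hier}, \ref{ru-amb}. One should also double-check in the $\eqref{cl-compl-pass}\Rightarrow\eqref{cl-compl-ru}$ direction that the hypothesis "$X$ is a sublattice of $Y$" together with $\ucl{X}=Y$ legitimately places $Y$ inside $\Xru$: this needs $X$ order dense in $Y$, which we get precisely once we know $X$ is majorizing (via Proposition~\ref{maj-o-dense}\eqref{maj-o-dense-clo}), so the order of steps matters — establish majorizing first, then order density, then apply the completion results.
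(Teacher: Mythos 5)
Your \eqref{cl-compl-maj}$\Rightarrow$\eqref{cl-compl-pass} step and the substantive part of \eqref{cl-compl-pass}$\Rightarrow$\eqref{cl-compl-ru} (Proposition~\ref{pass-down} gives that $I(X)$ is a projection band, bands are uniformly closed, uniform density then forces $I(X)=Y$, hence $X$ is majorizing, and Corollary~\ref{ru-amb} finishes) use exactly the paper's ingredients; your verification that a projection band is uniformly closed via uniform continuity of the band projection is a fine alternative to the paper's one-line remark. The genuine gap is in \eqref{cl-compl-ru}$\Rightarrow$\eqref{cl-compl-maj}. Your argument hinges on the inclusion $Y=\ucl{X}\subseteq\Xru$, asserted without proof inside the chain $X\subseteq Y\subseteq\ucl{X}=Y\subseteq\Xru$ in order to invoke Corollary~\ref{hier}. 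That inclusion is false in general: for $X=c_{00}$ and $Y=c_0$ (Example~\ref{ex:c00}) one has $\ucl{X}=c_0$ while $\Xru=c_{00}$. Moreover, since $X$ is majorizing in $\Xru$, the inclusion $\ucl{X}\subseteq\Xru$ already yields \eqref{cl-compl-maj} on its own, so at this point you are assuming essentially what is to be proved, and nothing in your write-up extracts it from hypothesis \eqref{cl-compl-ru}.

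Part of the difficulty comes from the statement itself: as printed, \eqref{cl-compl-ru} refers to $\Yru$, and since $Y$ is assumed uniformly complete one has $\Yru=Y$ canonically, so \eqref{cl-compl-ru} as written holds trivially for every such pair $(X,Y)$ --- and then the equivalence would fail for $c_{00}\subseteq c_0$. The paper's own proof (Proposition~\ref{ru-maj} for \eqref{cl-compl-ru}$\Rightarrow$\eqref{cl-compl-maj}, Corollary~\ref{ru-amb} for the converse) shows that the intended reading is ``$Y$ is isomorphic to $\Xru$ via an isomorphism preserving~$X$''. Under that reading the implication is immediate: the isomorphism transports the majorization of $X$ in $\Xru$ (Proposition~\ref{ru-maj}) to $Y$, with no need for the detour through $\Yru=\Xru$ or Corollary~\ref{hier}. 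You should either flag the misprint and argue this way for the corrected statement, or note that with the literal $\Yru$ no proof of \eqref{cl-compl-ru}$\Rightarrow$\eqref{cl-compl-maj} is possible. (Your closing step in \eqref{cl-compl-pass}$\Rightarrow$\eqref{cl-compl-ru}, which reuses $X\subseteq Y\subseteq\Xru$, is harmless there because by that point Corollary~\ref{ru-amb} has already supplied the identification of $Y$ with $\Xru$.)
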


\begin{proof}
  \eqref{cl-compl-ru}$\Rightarrow$\eqref{cl-compl-maj} by
  Proposition~\ref{ru-maj}.
  \eqref{cl-compl-maj}$\Rightarrow$\eqref{cl-compl-ru} by
  Corollary~\ref{ru-amb}.
  \eqref{cl-compl-maj}$\Rightarrow$\eqref{cl-compl-pass} by
  Proposition~\ref{majorizing}.

  \eqref{cl-compl-pass}$\Rightarrow$\eqref{cl-compl-ru} By
  Theorem~\ref{Xru-int-Z} we may assume that $\Xru\subseteq Y$. We
  claim that every subset of $\Xru$ which is order bounded in $Y$ is
  also order bounded in $\Xru$.  Let $A\subseteq\Xru_+$ and $y\in Y$
  such that $A\le y$. By Proposition~\ref{pass-down},
  $[0,y]\cap X\le x$ for some $x\in X_+$. Since $X$ is order dense in
  $\Xru$, for every $z\in A$ we have $z=\sup[0,z]\cap X$ in $\Xru$. It
  follows from $[0,z]\cap X\subseteq[0,y]\cap X$ that $z\le x$. This
  yields $A\le x$. By Proposition~\ref{pass-down}, uniform convergence
  passes from $Y$ to $\Xru$. It follows that $\Xru$ is uniformly
  closed in~$Y$, we conclude that $Y=\Xru$.
\end{proof}

\begin{example}\label{ex:pos-inj}
  The following example shows that Theorem~\ref{univ} fails for the
  class of positive injective order continuous operators.  Let $K$ be
  the Cantor set, viewed as a subset of $[0,1]$. Put $Y=C(K)$. Being a
  Banach space, $Y$ is uniformly complete. Let $Z$ be the subspace of
  $Y$ spanned by the characteristic functions of clopen sets. It is
  dense (in norm and uniformly) by Stone-Weierstrass Theorem. Put
  $X=Z\oplus Z$. It follows from Remark~\ref{str-un-ru-compl} or from
  Proposition~\ref{oplus-ru} that $\Xru=Y\oplus Y$. Let
  $h\colon K\to\mathbb R$ be the inclusion map, i.e., $h(t)=t$. Define
  $T\colon \Xru\to Y$ via $T(f\oplus g)=f+gh$, that is,
  $\bigl(T(f\oplus g)\bigr)(t)=f(t)+tg(t)$ for all $t\in K$. Clearly,
  $T\ge 0$. It can be easily verified that~$T$, as well as its
  restriction to~$X$, are order continuous. Note that $T$ is not
  injective as $T(-h\oplus\one)=0$.

  We claim that $T_{|X}$ is injective. Indeed, suppose that $T(f\oplus
  g)=0$ for some $f,g\in Z$. We can write them as
  \begin{math}
    f=\sum_{i=1}^na_i\one_{K_i}
  \end{math}
  and
  \begin{math}
    g=\sum_{i=1}^nb_i\one_{K_i},
  \end{math}
  where
  $K_1,\dots,K_n$ are disjoint clopen non-empty subsets of~$K$. Fix
  $m=1,\dots,n$. Since $K$ has no isolated points, there are distinct
  points $s$ and $t$ in~$K_m$. We have
  \begin{displaymath}
    a_m+sb_m=f(s)+h(s)g(s)=0=f(t)+h(t)g(t)=a_m+tb_m.
  \end{displaymath}
  It follows that $a_m=b_m=0$ for all $m$, so that $f=g=0$.
\end{example}

In Theorem~\ref{univ}, we established the universal property for
operators from certain classes. Let $X$ and $Y$ be two vector lattices
and $T\colon X\to Y$ be an operator from one of these classes. Let
$j_X\colon X\hookrightarrow\Xru$ and $j_Y\colon Y\hookrightarrow\Yru$
be the canonical embeddings. Since $j_YT\colon X\to\Yru$ is again in
the same class as $T$, by the theorem it extends uniquely to an
operator $\Tru\colon\Xru\to\Yru$ and, moreover, $\Tru$ is again an
operator from the same class:
\begin{displaymath}
     \xymatrix{
       X \ar@{->}[r]^{T} \ar@{->}[d]_{j_X} &
       Y \ar@{->}[d]^{j_Y}  \\
      \Xru \ar@{->}[r]^{\Tru} & \Yru}
\end{displaymath}

We will next present a counterexample to the following two natural conjectures:
\begin{itemize}
  \item If $T\colon X\to Y$ is interval preserving, then $\Tru$ is interval
    preserving.
  \item ru-completion operation commutes with taking principal
    ideals. That is, for $f\in Y$, the principal ideal of $f$ in
    $\Yru$ is lattice isomorphic to the ru-completion of the principal
    ideal of $f$ in~$Y$. In symbols,
    $I_{\Yru}(f)\simeq\bigl(I_Y(f)\bigr)^{\mathrm{ru}}$.
\end{itemize}

\begin{example}\label{ex:piece-wise01}(cf.~\cite{Azouzi})
  Let $Y$ be the space of all piece-wise affine functions on
  $[0,1]$. As in Example~\ref{ex:piece-wise-ru}, $\Yru=C[0,1]$.
  Take $f\in Y$ given by $f(t)=t$. Let $X=I_Y(f)$, and let $T\colon
  X\to Y$ be the inclusion map. It is clear that $T$ is an interval
  preserving lattice isomorphic embedding.

  We claim that $\Xru$ is lattice isomorphic to $C[0,1]$. Indeed,
  consider the operator $S\colon X\to C[0,1]$ defined by $Sg=g/f$ for
  $g\in X$. Note that $g/f$ is clearly defined and continuous on
  $(0,1]$ and is constant on $(0,\varepsilon]$ for some
  $\varepsilon>0$, so $g/f$ extends by continuity to a function in
  $C[0,1]$; it is this function that we take for~$Sg$. It is easy to
  see that $S$ is a lattice isomorphic embedding. Hence its range $SX$
  is a sublattice of $C[0,1]$. It is also easy to see that $SX$ contains
  constant functions and separates points of $[0,1]$. By
  Stone-Weierstrass Theorem, $SX$ is norm dense in $C[0,1]$. Hence,
  $X$ is lattice isomorphic to a uniformly dense majorizing sublattice
  of $C[0,1]$. Corollary~\ref{ru-amb} yields that $\Xru\simeq C[0,1]$;
  this proves the claim.

  On one hand, it follows from $I_Y(f)=X$ that
  \begin{math}
    \bigl(I_Y(f)\bigr)^{\mathrm{ru}}=\Xru\simeq C[0,1].
  \end{math}
  On the other hand,
  \begin{math}
    I_{\Yru}(f)=I_{C[0,1]}(f)\simeq C_b(0,1]
  \end{math}
  as in Example~\ref{COmega-ucompl}. It is easy to see that
  $C[0,1]\not\simeq C_b(0,1]$. Indeed, while $C[0,1]$ is separable,
  $C_b(0,1]$ contains $\ell_\infty$ as a closed sublattice and,
  therefore, is non-separable. However, since both spaces are Banach
  lattices, every surjective lattice isomorphism between them must
  also be a norm isomorphism, hence must preserve separability. 
  We conclude that
  $I_{\Yru}(f)\not\simeq\bigl(I_Y(f)\bigr)^{\mathrm{ru}}$.

  Finally, we claim that $\Tru\colon\Xru\to\Yru$ fails to be
  interval preserving. Since $T$ is a lattice isomorphic embedding, so
  is~$\Tru$. Suppose that $\Tru$ is interval preserving. Then it maps
  principal ideals to principal ideals, hence
  \begin{math}
    \Tru(I_{\Xru}(f))=I_{\Yru}(\Tru f).
  \end{math}
  However, since
  $X$ is majorizing in~$\Xru$,  we have
  \begin{math}
    \Tru(I_{\Xru}(f))=\Tru\Xru\simeq\Xru\simeq C[0,1],
  \end{math}
  while
  \begin{math}
    I_{\Yru}(\Tru f)\simeq C_b(0,1],
  \end{math}
  which is a contradiction.
\end{example}

\section{The adherence of $X$ with regulators in $X$.}

The following is a simple consequence of Corollary~\ref{ru-amb}.

\begin{proposition}\label{ru-IX}
  If $X$ is a sublattice of a uniformly complete vector lattice $Z$
  then $\Xru$ is lattice isomorphic to the uniform closure of $X$ in
  the ideal $I(X)$ generated by $X$ in~$Z$.
\end{proposition}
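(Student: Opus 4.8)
The plan is to produce a concrete uniformly complete vector lattice inside which $X$ embeds as a majorizing, uniformly dense sublattice, and then to quote Corollary~\ref{ru-amb}. The natural candidate is $V$, the uniform closure of $X$ in $I(X)$.

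Two preliminary observations are essentially free. Since $X$ is a sublattice, $I(X)=\bigcup_{y\in X_+}I_y^Z$; in particular $X$ is majorizing in $I(X)$, and hence also majorizing in the intermediate sublattice $V$. Also, $X\subseteq V$, and Proposition~\ref{maj-cl-in-cl}, applied to the majorizing inclusion $X\subseteq I(X)$, shows that the uniform closure of $X$ in $V$ is $V$ again; that is, $X$ is uniformly dense in $V$.

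The heart of the argument is to verify that $V$ is uniformly complete. Given a uniformly Cauchy sequence $(x_n)$ in $V$ with some regulator, I would first enlarge the regulator to an element $e\in X_+$ --- legitimate because $X$ majorizes $I(X)\supseteq V$ --- chosen so large that, in addition, a suitable tail term $x_{n_0}$ lies in $I_e^Z$. Then a tail of $(x_n)$ lies in $I_e^Z$ and is $\norm{\cdot}_e$-Cauchy there; since $Z$ is uniformly complete, $\bigl(I_e^Z,\norm{\cdot}_e\bigr)$ is complete, so the sequence $\norm{\cdot}_e$-converges to some $x\in I_e^Z\subseteq I(X)$, i.e.\ $x_n\goesur{e}x$ in $I(X)$. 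Because $V$ is uniformly closed in $I(X)$ and contains every $x_n$, we get $x\in V$; and since $e\in X_+\subseteq V_+$, in fact $x_n\goesur{e}x$ in $V$. Thus $V$ is uniformly complete.

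Finally, $X$ is a majorizing sublattice of the uniformly complete lattice $V$, so Corollary~\ref{ru-amb} identifies $\Xru$ (up to lattice isomorphism) with the uniform closure of $X$ in $V$, which by the second paragraph is $V$ itself --- that is, with the uniform closure of $X$ in $I(X)$. The only delicate point is the uniform-completeness step, and within it the sole subtlety is the regulator bookkeeping needed to fit (a tail of) the Cauchy sequence into a single principal ideal $I_e^Z$ with $e\in X_+$, so that completeness of $\bigl(I_e^Z,\norm{\cdot}_e\bigr)$ can be invoked; the remaining verifications are routine.
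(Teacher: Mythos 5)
Your proof is correct and takes essentially the paper's route: the statement is deduced from Corollary~\ref{ru-amb} together with the fact that $X$ is majorizing in $I(X)$. The only difference is that the paper applies Corollary~\ref{ru-amb} directly with $Y=I(X)$, since an ideal of the uniformly complete lattice $Z$ is itself uniformly complete --- which is exactly what your Cauchy-sequence argument for $V$ establishes in a special case --- so your detour through the closure $V$ and Proposition~\ref{maj-cl-in-cl} is sound but avoidable.
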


Let $\Xruu$ denote the uniform adherence of $X$ in~$\Xru$.
If $X$ is a sublattice of a vector lattice $Z$ then, by
Proposition~\ref{loc-adh}, the uniform adherence of $X$ in $I(X)$
is the set of all uniform limits in $Z$ of sequences in $X$ with
regulators in~$X$.

\begin{proposition}
  Suppose that $X$ is a sublattice of a uniformly complete vector
  lattice~$Z$. Then the uniform adherence of $X$ in $I(X)$ is
  isomorphic~$\Xruu$.
\end{proposition}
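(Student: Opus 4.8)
The plan is to transport the question into the concrete model of $\Xru$ supplied by Proposition~\ref{ru-IX}, using $I(X)$ itself as the ambient uniformly complete space. First I would record two easy facts about $I(X)$, the ideal generated by $X$ in $Z$: it is uniformly complete, and $X$ is majorizing in it. For the first, note that $I(X)$ is a solid sublattice of $Z$, so for each $u\in I(X)_+$ the principal ideal of $u$ in $I(X)$ coincides with $I_u$, the principal ideal of $u$ in $Z$, which is $\norm{\cdot}_u$-complete since $Z$ is uniformly complete; as $u$ was arbitrary, $I(X)$ is uniformly complete. For the second, since $X$ is a \emph{sublattice} we have $I(X)=\{z\in Z:\abs{z}\le x\text{ for some }x\in X_+\}$, whence $X$ is majorizing in $I(X)$.

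Granting these, Corollary~\ref{ru-amb} and the ``moreover'' clause of Theorem~\ref{Xru-int-Z} (both applied with the uniformly complete space $I(X)$ in place of $Z$) yield a lattice isomorphism $\phi\colon\Xru\to W$ that fixes $X$ pointwise, where $W$ denotes the uniform closure of $X$ in $I(X)$. Since $\phi$ and $\phi^{-1}$ are lattice isomorphisms, they are order bounded, hence uniformly continuous, so $\phi$ carries uniformly convergent sequences to uniformly convergent sequences in both directions. Therefore $\phi$ maps the uniform adherence of $X$ in $\Xru$ — which is $\Xruu$ by definition — onto the uniform adherence $\uadh{X}{1}_W$ of $X$ in $W$, and $\phi|_{\Xruu}$ is a lattice isomorphism onto $\uadh{X}{1}_W$ fixing $X$. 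Recalling from the paragraph preceding the proposition (together with Proposition~\ref{loc-adh}) that $\uadh{X}{1}_{I(X)}$ is exactly the object named in the statement, it remains to show $\uadh{X}{1}_W=\uadh{X}{1}_{I(X)}$.

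For this I would apply Proposition~\ref{cl-cl} with its ambient uniformly complete vector lattice taken to be $I(X)$, its sublattice taken to be $W$, and its subset $A$ taken to be $X\subseteq W$. Since $W$ is, by definition, uniformly closed in $I(X)$, its uniform adherence in $I(X)$ is $W$ itself, so the first conclusion of Proposition~\ref{cl-cl} reads
\[
  \uadh{X}{1}_{I(X)}=\uadh{X}{1}_{W},
\]
which is exactly what was needed. Combined with the previous paragraph, this shows that $\Xruu$ is lattice isomorphic, via $\phi|_{\Xruu}$, to the uniform adherence of $X$ in $I(X)$.

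I expect no substantive difficulty here; the only thing requiring care is the bookkeeping of which vector lattice plays the role of the ``ambient'' space in each invocation of Proposition~\ref{cl-cl}, Corollary~\ref{ru-amb}, and Theorem~\ref{Xru-int-Z}, and checking at the outset that $I(X)$ is uniformly complete and $X$ is majorizing in it so that those results genuinely apply.
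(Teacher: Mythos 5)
Your proof is correct, and it shares its backbone with the paper's argument: both rest on identifying $\Xru$ with the uniform closure $W$ of $X$ in $I(X)$, i.e., on Proposition~\ref{ru-IX}, which you in effect re-derive from Corollary~\ref{ru-amb} after checking that $I(X)$ is uniformly complete and that $X$ is majorizing in $I(X)$ (two prerequisites the paper leaves implicit; your verifications of them are fine). Where you diverge is in matching the two adherences. The paper finishes with the regulator characterization of Proposition~\ref{loc-adh}: an element of the adherence of $X$ in $I(X)$ is the limit of a sequence in $X$ with a regulator in~$X$, and since that regulator lies in $X\subseteq\Xru$ the same convergence holds verbatim in $\Xru$, giving membership in $\Xruu$; the converse uses that $X$ is majorizing in $\Xru$ to place the regulator back in $X$. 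You instead transport $\Xruu$ through the isomorphism $\phi$ (legitimate, since $\phi$ and $\phi^{-1}$ are order bounded, hence uniformly continuous, and $\phi$ fixes $X$) and then equate $\uadh{X}{1}_W$ with $\uadh{X}{1}_{I(X)}$ via Proposition~\ref{cl-cl} applied in the ambient $I(X)$ with the sublattice taken to be the uniformly closed $W$; that invocation is valid, but it routes through Proposition~\ref{uconv-cl} and so uses uniform completeness of $I(X)$ in an essential way, whereas the paper's regulator bookkeeping yields the same equality for free because the regulators already lie in $X\subseteq W$. In short, you trade a one-line regulator observation for heavier (but available) machinery; both routes are sound, and yours has the small merit of spelling out why $I(X)$ qualifies as an ambient uniformly complete space.
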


\begin{proof}
  By Proposition~\ref{ru-IX}, the uniform closure of $X$ in $I(X)$ is
  isomorphic~$\Xru$. Hence, if $z$ is in the uniform adherence of $X$
  in $I(X)$, it is in~$\Xru$, and there is a sequence in $X$ which
  converges to $z$ with respect to a regulator in~$X$. It follows that
  $z\in\Xruu$. The converse is straightforward.
\end{proof}

The preceding result also implies that the uniform adherence of $X$ in
$I(X)$ does not depend on the ambient space~$Z$. The following is an
immediate consequence of Corollary~\ref{IuX-cofin} and
Proposition~\ref{ideals-up}.

\begin{corollary}
  Let $X$ be a sublattice of a uniformly complete vector
  lattice~$Z$. If $\Icl{u}{X}$ is an ideal in $\Icl{v}{X}$ whenever
  $u\le v$ in $X_+$ then $\Xruu=\bigcup_{u\in X_+}\Icl{u}{X}=\Xru$.
\end{corollary}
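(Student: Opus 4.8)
The plan is simply to assemble three facts already established in the paper; there is essentially no new content. Throughout, write $Z_0:=\bigcup_{u\in X_+}\Icl{u}{X}$ for the set appearing in the statement (the index in the display should of course range over $u\in X_+$). Keep in mind the role reversal of the letters: in Corollary~\ref{IuX-cofin} and Proposition~\ref{ideals-up} the sublattice is called $Y$ and the ambient lattice $X$, whereas here the sublattice is $X$ and the ambient (uniformly complete) lattice is $Z$, so $\Icl{u}{Y}$ there becomes $\Icl{u}{X}$ here.

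First I would apply Corollary~\ref{IuX-cofin} to the sublattice $X$ of $Z$, taking $A=X_+$. The cofinality condition of that corollary is trivially met, since every $u\in X_+$ satisfies $u\le u\in A$. The conclusion is that $Z_0$ is exactly the uniform adherence of $X$ in $I(X)$. Next, the standing hypothesis of the present statement — that $\Icl{u}{X}$ is an ideal in $\Icl{v}{X}$ whenever $u\le v$ in $X_+$ — is precisely the hypothesis of Proposition~\ref{ideals-up} (again applied to the sublattice $X$ of the uniformly complete lattice $Z$). Hence that proposition shows that the \emph{same} set $Z_0=\bigcup_{v\in X_+}\Icl{v}{X}$ is the uniform closure of $X$ in $I(X)$. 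So, under the hypothesis, the uniform adherence and the uniform closure of $X$ in $I(X)$ coincide, and both equal $Z_0$.

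Finally I would translate these two descriptions into statements about $\Xruu$ and $\Xru$. By Proposition~\ref{ru-IX} the uniform closure of $X$ in $I(X)$ is lattice isomorphic, via an isomorphism fixing $X$, to $\Xru$; and by the proposition proved just above (that the uniform adherence of $X$ in $I(X)$ is isomorphic to $\Xruu$) the uniform adherence of $X$ in $I(X)$ is likewise lattice isomorphic to $\Xruu$. Since a lattice isomorphism fixing $X$ carries uniform adherences to uniform adherences and uniform closures to uniform closures, these identifications are compatible with one another, and composing them yields $\Xruu=Z_0=\Xru$ (under the natural identification of $\Xru$ with $\overline{X}$ inside $I(X)$).

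As for the main obstacle: there is none of substance — the proof is a one-line citation of the two displayed results together with Proposition~\ref{ru-IX} and the preceding proposition. The only things to watch are the letter interchange noted above and the harmless index typo in the displayed union; neither affects the argument.
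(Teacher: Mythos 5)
Your proposal is correct and is essentially the paper's own argument: the authors state the corollary as an immediate consequence of Corollary~\ref{IuX-cofin} and Proposition~\ref{ideals-up}, identified with $\Xruu$ and $\Xru$ via Proposition~\ref{ru-IX} and the preceding adherence proposition, exactly as you assemble it. Your remarks on the letter interchange, the index typo in the union, and the compatibility of the identifications fixing $X$ are accurate and harmless.
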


\medskip

The following approach to constructing $\Xru$ was outlined
in~\cite{Veksler:69} and then further developed in
\cite{Emelyanov:23,Emelyanov:24}. Suppose that $X$ is a sublattice of
a uniformly complete vector lattice~$Z$. Consider the set of all
uniform limits in $Z$ of sequences in $X$ with regulators in~$X$. By
the preceding argument, this set is precisely the uniform adherence of
$X$ in $I(X)$; it may also be identified with $\Xruu$, the uniform
adherence of $X$ in~$\Xru$. Iterating this process, for every ordinal
number $\kappa$ we get $\uadh{X}{\kappa}$ in~$\Xru$. As observed in
the introduction, $\uadh{X}{\omega_1}=\Xru$, so the process stabilizes
at $\Xru$ after $\omega_{1}$ steps.

\begin{remark}
  Another approach is developed in \cite[Lemma~1.1]{Buskes:89}.  They
  introduce a topology on $Z$ where the sets are closed if and only if
  they are closed with respect to the uniform convergence with
  regulators in~$X$. They prove that the closure of $X$ in $Z$ with
  respect to this topology is a uniformly complete sublattice which
  essentially does not depend on~$Z$. Moreover, $X$ is majorizing in
  this closure, and the universal property for lattice homomorphisms
  is satisfied for the closure.
\end{remark}

Yet another related construction was discussed in~\cite{Cernak:09} in
the setting of lattice ordered groups. One can construct the norm
completion of a normed spaces as the quotient of the space of all norm
Cauchy sequences over all norm null sequences.  It is a natural
question whether one could construct $\Xru$ in a similar fashion. The
following propositions says that this construction only yields
$\Xruu$ rather than all of~$\Xru$.

Let $X^{\mathbb N}$ be the set of all sequences of elements of~$X$. It
is a standard fact that $X^{\mathbb N}$ is again a vector
lattice under coordinate-wise order and operations. Let $C(X)$ and
$N(X)$ be the subsets of $X^{\mathbb N}$ consisting of all uniformly
Cauchy and uniformly null sequences, respectively. It is easy to
verify that $C(X)$ is a sublattice of $X^{\mathbb N}$ while $N(X)$ is
an ideal of $C(X)$. It follows that $C(X)/N(X)$ is a vector lattice.

\begin{proposition}
  In the notations above, $C(X)/N(X)$ is lattice isomorphic to~$\Xruu$.
\end{proposition}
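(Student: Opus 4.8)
The plan is to exhibit an explicit lattice isomorphism $\Phi\colon C(X)/N(X)\to\Xruu$ and check it is well-defined, a lattice homomorphism, injective, and surjective. Since every uniformly Cauchy sequence in $X$ is uniformly Cauchy in $\Xru$ (with the same regulator), and $\Xru$ is uniformly complete, such a sequence converges uniformly in $\Xru$ to a unique limit; moreover if the regulator lies in $X$ then this limit lies in $\Xruu$ by definition of the uniform adherence of $X$ in $\Xru$. Conversely, any $z\in\Xruu$ is the uniform limit in $\Xru$ of some sequence in $X$ with regulator in $X$, so it arises this way. Thus the assignment $\xi\mapsto\lim\xi$ (limit taken in $\Xru$) maps $C(X)$ \emph{onto} $\Xruu$. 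I would first record that this limit map $L\colon C(X)\to\Xru$ is a lattice homomorphism: linearity is clear, and it preserves $\vee,\wedge,|\cdot|$ because uniform convergence in $\Xru$ is compatible with the lattice operations (uniform limits commute with $|\cdot|$ via $\bigl||a|-|b|\bigr|\le|a-b|$). Its kernel is exactly $N(X)$: $L\xi=0$ means the sequence is uniformly null in $\Xru$, and by Lemma~\ref{uo-Cauchy-amb} (applied with $X$ as the sublattice of $\Xru$, using that $\xi$ is uniformly Cauchy in $X$) this is equivalent to the sequence being uniformly null in $X$, i.e. $\xi\in N(X)$.

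Given this, the first isomorphism theorem for vector lattices yields a lattice isomorphism $C(X)/N(X)\cong\Range L=\Xruu$. The key point requiring care is the identification $\ker L=N(X)$: the inclusion $N(X)\subseteq\ker L$ is immediate since a sequence uniformly null in $X$ is uniformly null in $\Xru$ with the same regulator, but the reverse inclusion is precisely where one needs that the sequence was \emph{already} uniformly Cauchy in $X$ — a sequence can be uniformly null in $\Xru$ without being order bounded, let alone uniformly null, in $X$ (cf. Example~\ref{ex:ambient}), so the uniformly-Cauchy hypothesis is essential and is exactly what Lemma~\ref{uo-Cauchy-amb} exploits. I would also need the elementary facts, asserted just before the statement, that $C(X)$ is a sublattice of $X^{\mathbb N}$ and $N(X)$ an ideal of $C(X)$, so that $C(X)/N(X)$ is a vector lattice and the quotient map is a lattice homomorphism; these are routine and can be invoked without proof.

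For surjectivity of $L$ onto $\Xruu$ I would argue as follows. By Proposition~\ref{loc-adh} (or the discussion preceding it, identifying $\Xruu$ with the uniform adherence of $X$ in $I(X)$, which by Proposition~\ref{ru-IX} sits inside $\Xru$), every $z\in\Xruu$ is $\goesu z$-limit in $\Xru$ of some sequence $(x_n)$ in $X$ whose regulator may be taken in $X$; such $(x_n)$ is then uniformly Cauchy in $\Xru$ and, being a uniformly convergent sequence in $X$ with regulator in $X$, it is in fact uniformly Cauchy in $X$ as well — indeed from $|x_n-z|\le\tfrac1n e$ with $e\in X_+$ we get $|x_m-x_n|\le(\tfrac1m+\tfrac1n)e$ directly in $X$. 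Hence $(x_n)\in C(X)$ and $L(x_n)=z$. The main obstacle, to the extent there is one, is simply being careful about \emph{where} each convergence, regulator, and Cauchy condition lives — in $X$, in $I(X)$, or in $\Xru$ — and invoking Lemma~\ref{uo-Cauchy-amb} at the one spot where passing a null condition down from $\Xru$ to $X$ is needed; once the bookkeeping is set up the computation is entirely routine.

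\begin{proof}
  Every $\xi=(x_n)\in C(X)$ is uniformly Cauchy in $X$, hence in
  $\Xru$ (with the same regulator); since $\Xru$ is uniformly
  complete, $x_n\goesu z$ in $\Xru$ for a unique $z$, and the
  regulator may be taken in~$X$, so $z\in\Xruu$. Let
  $L\colon C(X)\to\Xruu$, $L\xi=z$. Uniform limits in $\Xru$ are
  linear and commute with the lattice operations, so $L$ is a lattice
  homomorphism. If $\xi\in N(X)$ then $x_n\goesu 0$ in $X$, hence
  in~$\Xru$, so $L\xi=0$; thus $N(X)\subseteq\ker L$. Conversely, if
  $L\xi=0$ then $x_n\goesu 0$ in~$\Xru$, while $\xi$ is uniformly
  Cauchy in~$X$; by Lemma~\ref{uo-Cauchy-amb}, $x_n\goesu 0$ in~$X$,
  i.e. $\xi\in N(X)$. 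Hence $\ker L=N(X)$.

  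To see $L$ is onto, let $z\in\Xruu$. By
  Propositions~\ref{ru-IX} and~\ref{loc-adh}, $z$ is a uniform limit
  in $\Xru$ of a sequence $(x_n)$ in $X$ with a regulator $e\in X_+$.
  Passing to a subsequence we may assume $\abs{x_n-z}\le\tfrac1n e$ for
  all~$n$; then $\abs{x_m-x_n}\le\bigl(\tfrac1m+\tfrac1n\bigr)e$ in~$X$,
  so $(x_n)$ is uniformly Cauchy in~$X$, i.e. $(x_n)\in C(X)$, and
  $L\bigl((x_n)\bigr)=z$.

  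Since $N(X)$ is an ideal of the sublattice $C(X)$ of $X^{\mathbb N}$,
  the quotient $C(X)/N(X)$ is a vector lattice and the quotient map is
  a lattice homomorphism. As $L$ is a surjective lattice homomorphism
  with kernel $N(X)$, it induces a lattice isomorphism
  $C(X)/N(X)\to\Xruu$.
\end{proof}
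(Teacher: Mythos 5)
Your proof is correct and follows essentially the same route as the paper: the paper likewise sends a Cauchy sequence to its uniform limit in $\Xru$ (choosing the regulator in $X$ via majorization), checks it is a lattice homomorphism, and gets injectivity and surjectivity from the definitions of $N(X)$ and $\Xruu$. The only cosmetic differences are that you define the map on $C(X)$ and factor through the quotient, and you use Lemma~\ref{uo-Cauchy-amb} to identify the kernel where the paper instead exploits that the regulator was chosen in~$X$; both are valid.
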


\begin{proof}
  Let $\xi\in C(X)/N(X)$. Take a representative $(x_n)$
  in~$\xi$. Being a uniformly Cauchy sequence, $(x_n)$ converges to
  some $x$ in~$\Xru$. Since $X$ is majorizing in~$\Xru$, we may choose
  the regulator in~$X$, hence $x\in\Xruu$. It is easy to see that $x$
  does not depend on a particular choice of the representative. Put
  $T\xi=x$. This defines $T\colon C(X)/N(X)\to\Xruu$.  Clearly, $T$ is
  linear; it is a lattice homomorphism because $x_n\goesu x$ implies
  $\abs{x_n}\goesu\abs{x}$. Observe that $T$ is one-to-one: if $x=0$
  then $(x_n)\in N(X)$, hence $\xi=0$.  It follows from the definition
  of $\Xruu$ that $T$ is surjective.
\end{proof}

\section{When $\uadh{X}{1}=\ucl{X}$ in $X^\delta$?}

Throughout this section, $X$ will be a vector lattice; we
will write $\uadh{X}{1}$ and $\ucl{X}$ for the adherence and the
closure of $X$ in~$X^\delta$. Since $X$ is majorizing in $X^\delta$,
we have $\uadh{X}{1}=\Xruu$ in this setting. We know from
Corollary~\ref{ru-clos-ru} that $\ucl{X}=\Xru$. In this section, we
will provide several sufficient conditions when $\uadh{X}{1}=\ucl{X}$.

Recall that a vector lattice satisfies the \term{$\sigma$-property} if every
countable set is contained in a principal ideal.

\begin{proposition}[\cite{Quinn:75}]
  If $X$ has the $\sigma$-property then $\uadh{X}{1}=\ucl{X}$.
\end{proposition}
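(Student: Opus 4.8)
The plan is to show directly that $\ucl{X}\subseteq\uadh{X}{1}$, since the reverse inclusion always holds. Let $0\le x\in\ucl{X}$; by Proposition~\ref{maj-o-dense}\eqref{maj-o-dense-clo} (applied with $Y=X$ majorizing in $X^\delta$), there is a countable set $\{y_n\mid n\in\mathbb N\}\subseteq X_+$ with $x=\sup_n y_n$ in $X^\delta$. By the $\sigma$-property, the set $\{y_n\}$ is contained in a principal ideal $I_e$ of $X$ for some $e\in X_+$; enlarging $e$ is harmless. The idea is that within $I_e$ we have a strong unit, so uniform convergence with regulator $e$ is just norm convergence in $(I_e,\norm{\cdot}_e)$, and the order-bounded supremum behaves well.

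First I would pass to the increasing sequence $z_m=\bigvee_{n=1}^m y_n$, so that $z_m\uparrow x$ in $X^\delta$, $z_m\in X_+$, and $0\le z_m\le x\le e'$ where $e'$ is chosen in $X_+$ with $x\le e'$ (possible since $X$ is majorizing in $X^\delta$). Replacing $e$ by $e\vee e'$, we have all $z_m$ and $x$ in $I_{e}^{X^\delta}$, which is a Banach lattice under $\norm{\cdot}_e$ by uniform completeness of $X^\delta$. Now the key step: I claim $z_m\goesur{e}x$. Indeed, $z_m\uparrow x$ in $X^\delta$ means $x-z_m\downarrow 0$ in $X^\delta$; but in the $C(K)$-representation of $(I_e^{X^\delta},\norm{\cdot}_e)$ given by Krein–Kakutani, a decreasing sequence of continuous functions with infimum $0$ need \emph{not} converge uniformly (Dini fails pointwise). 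So this naive claim is exactly the main obstacle, and it is false in general — which tells me the $\sigma$-property must be used more cleverly, not just to capture a regulator.

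The correct route, I expect, is this: because $X$ has the $\sigma$-property, $I_e^X$ is itself a vector lattice with strong unit $e$, hence by Remark~\ref{str-un-ru-compl} its uniform completion is the norm completion $\Icl{e}{X}$ of $(I_e^X,\norm{\cdot}_e)$, sitting inside $I_e^{X^\delta}$. Since $x\in\ucl{X}$ and $x\in I_e^{X^\delta}$, one shows $x$ lies in this norm closure $\Icl{e}{X}$ — the point being that the uniform closure of $X$ intersected with $I_e^{X^\delta}$ is contained in the $\norm{\cdot}_e$-closure of $I_e^X$, which follows because $\Icl{e}{X}$ is uniformly closed in $I_e^{X^\delta}$ (being a Banach lattice, by Proposition~\ref{u-compl-closed}) and hence, via a transfinite induction along the adherence hierarchy together with Proposition~\ref{loc-adh}, catches every element of $\ucl{X}$ that happens to lie in a principal ideal of $X$. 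Then $x\in\Icl{e}{X}$ means there is a sequence $(x_n)$ in $I_e^X\subseteq X$ with $\norm{x_n-x}_e\to 0$, i.e.\ $x_n\goesur{e}x$ with $e\in X$, so $x\in\uadh{X}{1}$ by Proposition~\ref{loc-adh}. Finally, splitting a general $x\in\ucl{X}$ as $x^+-x^-$ and handling each part this way (using that $\ucl{X}$ is a sublattice) gives $\ucl{X}\subseteq\uadh{X}{1}$, completing the proof. The transfinite-induction step verifying $\ucl{X}\cap I_e^{X^\delta}\subseteq\Icl{e}{X}$ is where I expect the real work to be.
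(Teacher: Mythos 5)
Your instinct to flag the Dini-type failure of the naive claim $z_m\goesur{e}x$ is correct, but the ``correct route'' you substitute for it contains a genuine gap: the inclusion $\ucl{X}\cap I_e^{X^\delta}\subseteq\Icl{e}{X}$ is never proved, and it is not a technical footnote to be filled in later --- it essentially \emph{is} the proposition, and in your fixed-$e$ form it is even stronger. Since $X$ is majorizing in $X^\delta$, every element of $\ucl{X}$ lies in $I_e^{X^\delta}$ for some $e\in X_+$, so your claim amounts to $\ucl{X}\subseteq\bigcup_{v\in X_+}\Icl{v}{X}$, which by Proposition~\ref{loc-adh} and Corollary~\ref{IuX-cofin} is exactly $\ucl{X}\subseteq\uadh{X}{1}$, the statement to be proved. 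Worse, for a \emph{fixed} $e$ the claim asserts that any $x\in\ucl{X}$ dominated by a multiple of $e$ can be approximated from $X$ with regulator $e$ itself; even granting the proposition, an element of $\uadh{X}{1}$ lying in $I_e^{X^\delta}$ is only guaranteed a regulator $v\in X_+$ with no relation to $e$, so this sharper statement does not follow and is not justified. Finally, you never indicate where the $\sigma$-property enters your promised transfinite induction, yet that is precisely where it must be used: at each stage one has countably many approximating sequences with countably many regulators, and the $\sigma$-property (together with majorization of $X$ in $X^\delta$) is what gathers all of them into a single principal ideal $I_u^{X^\delta}$ with $u\in X_+$.

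For comparison, the paper's proof does exactly this and nothing more. It suffices to show $\uadh{X}{2}=\uadh{X}{1}$ (then the adherence hierarchy stabilizes at the first step, so $\uadh{X}{1}=\ucl{X}$). Given $z\in\uadh{X}{2}$, choose $y_n\in\uadh{X}{1}$ with $y_n\goesu z$ and, for each $n$, a sequence $(x^{(n)}_k)$ in $X$ with $x^{(n)}_k\goesu y_n$; by the $\sigma$-property all these vectors and their regulators fit into one principal ideal $I_u^{X^\delta}$ with $u\in X_+$, which may be identified with a $C(K)$ space, and there uniform convergence with regulator $u$ is $\norm{\cdot}_u$-convergence. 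Since norm convergence is topological, the second norm adherence of $\{x^{(n)}_k\}$ equals the first, so $z$ is a $\norm{\cdot}_u$-limit of a sequence in $X$ with regulator $u\in X_+$, i.e.\ $z\in\uadh{X}{1}$. If you want to salvage your outline, replace the unproven fixed-$e$ claim by this two-level ``merge all regulators and use topologicity of $\norm{\cdot}_u$'' argument.
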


\begin{proof}
  It suffices to prove that $\uadh{X}{1}=\uadh{X}{2}$. Suppose that
  $z\in \uadh{X}{2}$. We can find a sequence $(y_n)$ in $\uadh{X}{1}$
  such that $y_n\goesu x$ in~$X^\delta$. For each $n$, we can find a
  sequence $(x^{(n)}_k)$ in $X$ such that $x^{(n)}\goesu y_n$. Using
  the fact that $X$ is majorizing in $X^\delta$ and that $X$ and,
  therefore, $X^\delta$ has the $\sigma$-property, we find a $u\in X$ such
  that the vectors $z$, $y_n$, and $(x^{(n)}_k)$ (for all $n$ and $k$) are
  in~$I_u^{X^\delta}$, the principal ideal of $u$
  in~$X^\delta$. Moreover, we may also assume that $y_n\goesur{u}z$
  and $x^{(n)}\goesur{u} y_n$ for every $n$. We may identify
  $I_u^{X^\delta}$ with a $C(K)$ space. It follows that $z$ is in the
  second norm adherence of the set $\{x^{(n)}_k\}_{n,k}$. Since norm
  convergence is topological, $z$ is in the first norm adherence of
  $\{x^{(n)}_k\}_{n,k}$. We conclude that $z$ is in~$\uadh{X}{1}$.
\end{proof}

Recall that a net $(x_\alpha)$ in a vector lattice
\term{$\sigma$-order converges} to a vector $x$ if there exists a
sequence $(z_n)$ in $X$ such that $z_n\downarrow 0$ and for every $n$
there exists $\alpha_0$ such that $\abs{x_\alpha-x}\le z_n$ whenever
$\alpha\ge\alpha_0$. We write $x_\alpha\goesso x$. Lattice operations
are $\sigma$-order continuous; see, e.g.,~\cite{BCTW}. It is easy to
see that $x_\alpha\goesu x$ implies $x_\alpha\goesso x$.

The following lemma is a special case of Theorem~5.2 of~\cite{BCTW}.

\begin{lemma}\label{so-ru-compl}
  If $X$ is complete with respect to $\sigma$-order convergence then
  it is uniformly complete.
\end{lemma}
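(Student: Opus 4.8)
The plan is to show directly that every uniformly Cauchy net in $X$ is uniformly convergent, using the hypothesis that $X$ is complete with respect to $\sigma$-order convergence. First I would reduce the problem to sequences. If $(x_\alpha)$ is uniformly Cauchy with regulator $v\in X_+$, then for each $n$ there is an index $\alpha_n$ (which we may take increasing) such that $\abs{x_\alpha-x_\beta}\le\frac1n v$ whenever $\alpha,\beta\ge\alpha_n$. Setting $u_n\coloneqq x_{\alpha_n}$, the sequence $(u_n)$ is uniformly Cauchy with the same regulator $v$, and moreover $\abs{u_m-u_n}\le\frac1n v$ for all $m\ge n$. A standard observation is that if this subsequence converges uniformly to some $x$, then the whole net converges uniformly to $x$ with regulator $v$: given $\varepsilon>0$, pick $n$ with $\frac1n\le\frac\varepsilon2$ and $\abs{u_m-x}\le\frac\varepsilon2 v$ for $m$ large; then for $\alpha\ge\alpha_n$ we get $\abs{x_\alpha-x}\le\abs{x_\alpha-u_m}+\abs{u_m-x}\le\varepsilon v$ for suitable large $m$.

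So it suffices to find the uniform limit of the sequence $(u_n)$. Here is where I would invoke $\sigma$-order completeness. The key point is that $(u_n)$ is also $\sigma$-order Cauchy: from $\abs{u_m-u_n}\le\frac1n v$ for $m\ge n$, the sequence $z_n\coloneqq\frac1n v$ decreases to $0$ (by the Archimedean property) and witnesses that $(u_n)$ is $\sigma$-order Cauchy in the sense required. By the hypothesis, $(u_n)$ $\sigma$-order converges to some $x\in X$; that is, there is a sequence $w_k\downarrow 0$ in $X$ with $\abs{u_n-x}\le w_k$ eventually in $n$, for each $k$. I then need to upgrade $\sigma$-order convergence of $(u_n)$ to $x$ into uniform convergence with a fixed regulator. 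This is the crux of the argument.

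To carry out that upgrade, fix $n$ and let $m\to\infty$ in $\abs{u_m-u_n}\le\frac1n v$: since lattice operations are $\sigma$-order continuous and $u_m\goesso x$, we get $\abs{x-u_n}\le\frac1n v$ for every $n$. But this says precisely that $u_n\goesur{v}x$, i.e.\ $(u_n)$ converges uniformly to $x$ with regulator $v$. Combined with the first paragraph, the original net $(x_\alpha)$ converges uniformly to $x$, so $X$ is uniformly complete.

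I expect the main obstacle to be purely bookkeeping: making sure the definition of $\sigma$-order Cauchy/completeness in \cite{BCTW} (as the hypothesis ``complete with respect to $\sigma$-order convergence'') is exactly strong enough to produce the limit $x$ from the dominated sequence $(u_n)$, and confirming that $\sigma$-order continuity of lattice operations legitimately lets us pass to the limit in the inequality $\abs{u_m-u_n}\le\frac1n v$. Once $\abs{x-u_n}\le\frac1n v$ is in hand, everything is immediate. An alternative, if one prefers not to take a subsequence first, is to argue that the uniformly Cauchy net $(x_\alpha)$ is itself $\sigma$-order Cauchy (using $z_n=\frac1n v$ directly against tails of the net), obtain $x$ from $\sigma$-order completeness, and then repeat the ``let the net index go to the limit'' step in $\abs{x_\alpha-x_\beta}\le\varepsilon v$ to conclude $\abs{x_\alpha-x}\le\varepsilon v$ for $\alpha$ large — this avoids passing to sequences altogether and is arguably cleaner.
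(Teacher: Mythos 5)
Your proposal is correct and essentially the paper's argument: the paper works directly with the net, notes that a uniformly Cauchy net is $\sigma$-order Cauchy, takes its $\sigma$-order limit $x$, and passes to the $\sigma$-order limit on $x_\beta$ in $\abs{x_\alpha-x_\beta}\le\varepsilon u$ to get $\abs{x_\alpha-x}\le\varepsilon u$ --- exactly the ``cleaner'' alternative you sketch in your final paragraph. Your main route through a subsequence is also valid, but the reduction to sequences is an unnecessary extra step.
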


\begin{proof}
  Suppose that $(x_\alpha)$ is uniformly Cauchy. Then it is
  $\sigma$-order Cauchy, hence $x_\alpha\goesso x$ for some~$x$.
  Since $(x_\alpha)$ is uniformly Cauchy, we have
  $x_\alpha-x_\beta\goesur{u}0$ for some regulator $u\in X_+$. Fix
  $\varepsilon>0$. Find $\alpha_0$ such that
  $\abs{x_\alpha-x_\beta}\le\varepsilon u$ whenever
  $\alpha,\beta\ge\alpha_0$. Passing to the $\sigma$-order limit on
  $x_\beta$, we get $\abs{x_\alpha-x}\le\varepsilon u$ whenever
  $\alpha\ge\alpha_0$. This implies that $x_\alpha-x\goesur{u}0$.
\end{proof}

Note that if $Y$ is a sublattice of $X^\delta$ containing $X$ then $Y$
is order dense and, therefore, regular in~$X^\delta$.

\begin{lemma}\label{msod-so-ru}
  Let $Y$ be a majorizing super order dense sublattice of~$X$.
  Suppose that $\sigma$-order convergence and uniform convergence
  agree on sequences in~$Y$. Then they also agree on nets in~$X$.
\end{lemma}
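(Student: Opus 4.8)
The plan is to prove the contrapositive-friendly direction: since uniform convergence always implies $\sigma$-order convergence (noted just before the lemma), it suffices to show that if $x_\alpha\goesso x$ in $X$ then $x_\alpha\goesu x$ in $X$. By translating we may assume $x=0$. Let $(z_n)$ be the controlling sequence, so $z_n\downarrow 0$ in $X$ and for every $n$ there is $\alpha_n$ with $\abs{x_\alpha}\le z_n$ for all $\alpha\ge\alpha_n$. First I would use that $Y$ is majorizing to replace $z_1$ by some $y\in Y_+$ with $z_1\le y$; then all the ``tail'' elements $x_\alpha$ (for $\alpha\ge\alpha_1$) and all $z_n$ lie in the ideal $I_y^X$, so we have reduced to working inside a principal ideal, and it is harmless to pass to the tail and assume the whole net sits in $[-y,y]$.

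The key step is to produce a single regulator in $Y$ that controls $(x_\alpha)$. Here I would build a sequence in $Y$ sandwiched between the $z_n$'s: since $Y$ is super order dense in $X$, for each $n$ pick $y_n\in Y_+$ with... — actually the cleaner route is to work with an increasing sequence. Note $y-z_n\uparrow y$ in $X$. Since $Y$ is super order dense, choose $w_n\in Y_+$ with $w_n\uparrow$ and $w_n\le y-z_n$ is too weak; instead, for each $n$ use super order density to approximate and then pass to a subsequence of the $z_n$ so that there exist $v_n\in Y$ with $z_{n+1}\le v_n\le z_n$. Concretely: $y-z_{n+1}\in X_+$, so there is an increasing sequence in $Y_+$ with supremum $y-z_{n+1}$; since this supremum dominates $y-z_n$ (as $z_{n+1}\le z_n$)... this still does not immediately land $v_n$ below $z_n$. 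The honest fix is: for each $n$, using that $z_n - z_{n+1}\in X_+$ and $Y$ super order dense, find $t_n\in Y_+$ with $t_n\le z_n-z_{n+1}$ and $z_n - z_{n+1} - t_n$ small enough that $\sum_{k\ge n} t_k$ still has supremum close to $z_n$; summing, $v_n:=\sum_{k\ge n}t_k$ lives in (the uniform completion, hence we must be careful) — so instead I would avoid infinite sums and argue directly: define $(x_\alpha)$ restricted to a cofinal subnet indexed so that on the $n$-th block $\abs{x_\alpha}\le z_n$, and then invoke the hypothesis. The decisive move is that $(x_\alpha)$, viewed through any such subnet, is a net in $Y$ that $\sigma$-order converges to $0$; reducing to a sequence (using the standard ``$\sigma$-order convergence reduces to sequences'' via the $z_n$) gives a sequence $(x_{\alpha_n})$ in $Y$ with $x_{\alpha_n}\goesso 0$, hence by hypothesis $x_{\alpha_n}\goesu 0$ in $Y$; but I need uniform convergence of the whole net, not just a sequence.

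The main obstacle, and where I would spend the real effort, is bridging from ``agreement on sequences in $Y$'' to ``agreement on nets in $X$'': a priori $(x_\alpha)$ need not lie in $Y$, and uniform convergence is not topological so passing to sequences loses information about the net. The plan to overcome this: (1) use majorizing + super order density of $Y$ in $X$ to replace the $X$-valued control sequence $(z_n)$ by a $\sigma$-order-null sequence $(\tilde z_n)$ in $Y_+$ with $\tilde z_n\downarrow 0$ and $\abs{x_\alpha}\le \tilde z_n$ eventually — this is the construction sketched above, done carefully by choosing $\tilde z_n\in Y$ with $z_{n+1}\le \tilde z_n\le z_{n-1}$, which is possible by super order density applied to the two ``gaps'' and the Riesz decomposition; (2) now the sequence $(\tilde z_n)$ lies in $Y$ and is $\sigma$-order null there (regularity of $Y$ in $X$ keeps the infimum $0$), and it is plainly order-bounded hence uniformly we may normalize; apply the hypothesis to the decreasing sequence $(\tilde z_n)$ itself — wait, a decreasing $\sigma$-order-null sequence in $Y$: by hypothesis it is uniformly null in $Y$, so there is $e\in Y_+$ with $\tilde z_n\le \varepsilon_n e$, $\varepsilon_n\to0$; (3) then for $\alpha\ge\alpha_n$ we get $\abs{x_\alpha}\le \tilde z_n\le\varepsilon_n e$, which is exactly $x_\alpha\goesur{e}0$ in $X$. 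So the crux is step (2): a $\sigma$-order-null \emph{sequence} in $Y$ is uniformly null in $Y$ by hypothesis, and $(\tilde z_n)$ provides a single regulator $e\in Y$; feeding this back dominates the original net. I would present the argument in this order — reduce to zero limit; manufacture $(\tilde z_n)\subseteq Y_+$ decreasing to $0$ with $\abs{x_\alpha}\le\tilde z_n$ eventually, using majorizing and super order density; apply the sequential hypothesis to $(\tilde z_n)$ to extract a regulator $e\in Y_+$; conclude $x_\alpha\goesur{e}0$.
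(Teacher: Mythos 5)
Your overall architecture is the right one and in fact matches the paper's proof: reduce to showing $x_\alpha\goesso 0$ implies $x_\alpha\goesu 0$, manufacture a decreasing sequence in $Y_+$ with infimum $0$ whose terms eventually dominate $\abs{x_\alpha}$, apply the sequential hypothesis to that sequence to obtain a single regulator $v\in Y_+$, and conclude $x_\alpha\goesur{v}0$. The last three steps of your plan are sound. The genuine gap is in how you produce the dominating sequence: you claim one can choose $\tilde z_n\in Y$ with $z_{n+1}\le\tilde z_n\le z_{n-1}$ ``by super order density applied to the two gaps and the Riesz decomposition.'' Super order density only gives increasing approximations \emph{from below}; it does not give interpolants between two comparable elements of $X$, and such interpolants need not exist. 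Concretely, let $X=C[0,1]$ and let $Y$ be the piecewise affine functions, which is a majorizing, super order dense sublattice; if $z_{n+1}(t)=t^2/4$ and $z_{n-1}(t)=t^2$, then any $y$ with $z_{n+1}\le y\le z_{n-1}$ and $y$ affine on some $[0,\delta]$ must satisfy $y(0)=0$, slope $\le 0$ from the upper bound and slope $>0$ from the lower bound --- so no piecewise affine interpolant exists. (Your earlier, discarded attempts with infinite sums run into exactly the problem you noticed yourself.)

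The fix, which is what the paper does, is to give up on sandwiching and only ask that each term of the new sequence dominate \emph{some} $z_n$: pick $w\in Y_+$ with $z_1\le w$ (majorizing), apply super order density to $w-z_n$ to get $y_k\uparrow w-z_n$ with $y_k\in Y_+$, so that $z_n=\inf A_n$ for a countable set $A_n\subseteq Y$; enumerate $\bigcup_n A_n$ as $(v_m)$ and set $u_m=\bigwedge_{i=1}^m v_i$. Then $u_m\downarrow 0$ in $X$, hence in $Y$, so $u_m\goesso 0$ in $Y$ and by hypothesis $u_m\goesur{v}0$ for some $v\in Y_+$; moreover each $u_m$ is a finite meet of elements each lying above some $z_{n_i}$, so $u_m\ge z_{\max_i n_i}$, and therefore $\abs{x_\alpha}\le u_m$ eventually for every $m$. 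This yields $x_\alpha\goesur{v}0$, completing the argument along the lines you intended.
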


\begin{proof}
  Suppose that $x_\alpha\goesso 0$ in~$X$. Find a sequence $(z_n)$ in
  $X$ such that $z_n\downarrow 0$ and for every $n$ there exists
  $\alpha_0$ such that $\abs{x_\alpha}\le z_n$ whenever
  $\alpha\ge\alpha_0$. For every~$n$,
  there is a countable set
  $A_n\subseteq Y$ such that $z_n=\inf A_n$ in~$X$.
  Let $A=\bigcup_{n=1}^\infty A_n$, and let $v_m$ be an
  enumeration of $A$. Put $u_m=\bigwedge_{i=1}^mv_m$. Clearly,
  $u_m\downarrow$. We have
  \begin{displaymath}
    \inf u_m=\inf v_m=\inf A=\inf_n\inf A_n=\inf_nz_n=0.
  \end{displaymath}
  It follows that $u_m\goesso 0$ in~$Y$. By assumption,
  $u_m\goesur{v} 0$ in~$Y$, with some regulator $v\in Y_+$. It is easy to
  see that for every $m$ there exists $n$ such that $z_n\le u_m$; it
  follows that there exists $\alpha_0$ such that
  $\abs{x_\alpha}\le u_m$ whenever $\alpha\ge\alpha_0$. We conclude
  that $x_\alpha\goesur{v} 0$.
\end{proof}

The following theorem is analogous to Theorem~6.7 in~\cite{Quinn:75}.

\begin{theorem}\label{so-ru-scompl}
  Suppose that $\sigma$-order convergence and uniform convergence
  agree on sequences in $X$. Then they also agree on nets
  in~$\Xru$. Furthermore, $\uadh{X}{1}=\ucl{X}$, and $\ucl{X}$ is the
  intersection of all sublattices of $X^\delta$ containing $X$ and
  complete under $\sigma$-order convergence.
\end{theorem}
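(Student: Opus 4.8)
The plan is to deduce all three claims in rapid succession from results already established; the hypothesis on $X$ enters only once. For the first claim, I would apply Lemma~\ref{msod-so-ru} with $\Xru$ in the role of the ambient lattice and $X$ in the role of the sublattice: by Proposition~\ref{ru-maj}, $X$ is a majorizing, super order dense sublattice of $\Xru$, and by hypothesis $\sigma$-order and uniform convergence agree on sequences in~$X$, so the lemma immediately yields that they agree on nets in~$\Xru$.

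For the second claim, recall that $\ucl{X}=\Xru$ (Proposition~\ref{Xru-cl-Xdelta}) and that, $X$ being majorizing in $X^\delta$, $\uadh{X}{1}$ coincides with $\Xruu$, the uniform adherence of $X$ in $\Xru$; moreover $\uadh{X}{1}$ is a sublattice of $\Xru$. Thus it suffices to show $\Xru_+\subseteq\uadh{X}{1}$. Given $0\le x\in\Xru$, super order density (Proposition~\ref{ru-maj}) furnishes a sequence $(y_n)$ in $X_+$ with $y_n\uparrow x$; then $x-y_n\downarrow 0$ in $\Xru$, so $y_n\goesso x$, and hence $y_n\goesu x$ in $\Xru$ by the first claim. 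Therefore $x\in\Xruu=\uadh{X}{1}$, and decomposing an arbitrary element of $\Xru$ as a difference of its (positive) parts, which again lie in $\Xru$, gives $\Xru\subseteq\uadh{X}{1}$, i.e. $\uadh{X}{1}=\ucl{X}$.

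For the third claim, I would first note that $\Xru$ is itself complete under $\sigma$-order convergence: a $\sigma$-order Cauchy net in $\Xru$ is uniformly Cauchy by the first claim, hence uniformly convergent in $\Xru$ (which is uniformly complete) to some $x\in\Xru$, hence $\sigma$-order convergent to $x$ by the first claim again. So $\Xru$ belongs to the family of sublattices of $X^\delta$ that contain $X$ and are $\sigma$-order complete. Conversely, any member $Y$ of that family is uniformly complete by Lemma~\ref{so-ru-compl}, so $\Xru\subseteq Y$ by the very definition of $\Xru$ as the intersection of all uniformly complete sublattices of $X^\delta$ containing~$X$. Hence $\Xru$ is the intersection of that family, and $\Xru=\ucl{X}$.

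None of these steps is computationally heavy; the one point that must be handled with care is tracking which ambient lattice each convergence is taken in. Since $X$ is order dense — hence regular — in $\Xru$ and majorizing in $X^\delta$ (so uniform convergence passes down by Proposition~\ref{majorizing}), these distinctions are harmless, and Lemma~\ref{msod-so-ru} is set up precisely to absorb them; this is where I would expect the only real subtlety to lie.
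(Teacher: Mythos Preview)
Your proof is correct and follows essentially the same route as the paper's: the first claim via Proposition~\ref{ru-maj} and Lemma~\ref{msod-so-ru}, the equality $\uadh{X}{1}=\ucl{X}$ via super order density and the first claim, and the intersection statement via Lemma~\ref{so-ru-compl} together with the observation that $\Xru$ is itself $\sigma$-order complete because the two convergences agree on it. The only cosmetic difference is the order in which the second and third claims are treated.
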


\begin{proof}
  By Proposition~\ref{ru-maj} and Lemma~\ref{msod-so-ru},
  $\sigma$-order convergence and uniform convergence agree on nets
  in~$\Xru$.

  It follows from Corollary~\ref{ru-clos-ru} and our definition of
  $\Xru$ that $\ucl{X}$ is the intersection of all uniformly complete
  sublattices $Y$ of $X^\delta$ containing~$X$. By
  Lemma~\ref{so-ru-compl}, $\ucl{X}$ is contained in the intersection
  of all sublattices of $X^\delta$ containing $X$ and complete under
  $\sigma$-order convergence. In fact, $\ucl{X}$ equals this
  intersection because $\ucl{X}$ is itself complete under
  $\sigma$-order convergence since it is uniformly complete and
  $\sigma$-order convergence and uniform convergence agree on it.

  To prove that $\uadh{X}{1}=\ucl{X}$, let $x\in\ucl{X}_+$. By
  Proposition~\ref{maj-o-dense}, there exists a sequence $(x_n)$ in
  $X_+$ such that $x_n\uparrow x$. It follows that $x_n\goesso x$ and,
  therefore, $x_n\goesu x$, so that $x\in\uadh{X}{1}$.
\end{proof}

For the proof of the next theorem, we need a few auxiliary facts and
definitions. We say that $X$ is has the \term{$\sigma$-projection
  property} ($\sigma$-PP) if every band generated by a countable set
is a projection band.

\begin{lemma}\label{sPP-ideals}
  The $\sigma$-projection property is inherited by ideals.
\end{lemma}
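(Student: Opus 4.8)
The plan is to produce a band projection on $I$ by restricting one from $X$. Fix a countable set $A\subseteq I$ and write $A^{d}$ for its disjoint complement in~$X$; recall that in an Archimedean vector lattice $A^{dd}$ is the band $B$ generated by $A$ in~$X$. By the $\sigma$-PP property of~$X$, $B$ is a projection band; let $P$ be the associated band projection. Since $I$ is an ideal and $0\le P\abs{x}\le\abs{x}$ for every $x\in I$, we have $P(I)\subseteq I$, so $P$ restricts to a positive idempotent operator on~$I$.

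The key step, and the only one that is not bookkeeping, is to identify the band generated by $A$ inside~$I$ with $B\cap I$. Writing $S^{d_I}=S^{d}\cap I$ for disjoint complements computed in~$I$, the band generated by $A$ in $I$ is $(A^{d}\cap I)^{d}\cap I$, and the inclusion $B\cap I\subseteq(A^{d}\cap I)^{d}\cap I$ is immediate. For the reverse inclusion, suppose $x\in I$ is disjoint from $A^{d}\cap I$ and let $y\in A^{d}$ be arbitrary. Then $\abs{x}\wedge\abs{y}$ lies in $I$, being below $\abs{x}$, and in $A^{d}$, being below $\abs{y}$ with $A^{d}$ an ideal; hence $\abs{x}\wedge\abs{y}\in A^{d}\cap I$, so disjointness of $x$ from it forces $\abs{x}\wedge\abs{y}=0$. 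Thus $x$ is disjoint from all of $A^{d}$, i.e.\ $x\in A^{dd}=B$, which proves that $B\cap I$ is the band generated by $A$ in~$I$.

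It then remains to check that $P|_{I}$ is a band projection onto $B\cap I$. Its range is $B\cap I$, since $Px=x$ whenever $x\in B\cap I$; the range of $(\mathrm{id}-P)|_{I}$ is $B^{d}\cap I$; and these two ideals are mutually disjoint, so $I=(B\cap I)\oplus(B^{d}\cap I)$. Moreover, if $x\in I$ is disjoint from $B\cap I$ then $Px\in B\cap I$ is disjoint from itself, hence $Px=0$ and $x\in B^{d}\cap I$; this gives $B^{d}\cap I=(B\cap I)^{d_I}$, so $B\cap I$ is complemented in $I$ by its disjoint complement and is therefore a projection band in~$I$. As $A$ was an arbitrary countable subset of~$I$, the ideal $I$ has the $\sigma$-PP property.

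I expect the identification in the second paragraph to be the only delicate point: an ideal need not be order dense, so one cannot pass to suprema of increasing sequences as one would in a regular sublattice, and the disjoint-complement computation is exactly what sidesteps this.
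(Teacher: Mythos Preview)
Your proof is correct and follows the same approach as the paper: identify the band generated by $A$ in the ideal with $B_X(A)\cap I$ and observe that the restriction of the band projection witnesses that this is a projection band in~$I$. The paper's proof is a terse three sentences that leaves both of these steps as exercises, whereas you have spelled out the disjoint-complement computation and the verification that $P|_I$ is a band projection; in particular, your second paragraph is exactly the argument behind the paper's one-line claim that $B_J(A)=B_X(A)\cap J$.
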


\begin{proof}
  Suppose that $X$ has the $\sigma$-PP, $J$ is an ideal in~$X$, and
  $A$ is a countable subset of~$J$. Note that $B_J(A)$, the band
  generated by $A$ in $J$, agrees with $B_X(A)\cap J$. It is now easy
  to see that this is a projection band in~$J$.
\end{proof}

Recall that a topological space is \term{basically disconnected} if
the closure of every open $F_\sigma$ set is open.  The following is
Theorem~3.14 of~\cite{Bilokopytov:24}:

\begin{proposition}\label{K-bas-sPP}
  Let $K$ be a compact Hausdorff space. The space $C(K)$ has a dense
  sublattice with the $\sigma$-projection property iff $K$ is basically
  disconnected.
\end{proposition}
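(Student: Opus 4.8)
\textbf{Proof proposal for Proposition~\ref{K-bas-sPP}.}

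The plan is to prove the two implications separately. For the forward direction, suppose $D \subseteq C(K)$ is a dense sublattice with the $\sigma$-PP property. To show $K$ is basically disconnected, it suffices to show that the closure of every cozero set is open; equivalently, that for every $f \in C(K)_+$ the band generated by $f$ in $C(K)$ is a projection band, i.e., that $\overline{\{f \ne 0\}}$ is clopen. Given such an $f$, density lets us approximate $f$ uniformly by elements of $D$; more usefully, I would produce an increasing sequence $(g_n)$ in $D_+$ with $g_n \uparrow f$ (this is available since $D$, being dense in the AM-space $C(K)$, is in particular uniformly dense and majorizing, so one can invoke Proposition~\ref{maj-o-dense}\eqref{maj-o-dense-adh}, or simply a direct truncation argument: take $g_n \in D$ with $\|g_n - f\|_\infty \le 1/n$ and replace $g_n$ by $(g_n - \tfrac1n\one)^+ \wedge f$-type expressions, staying in the sublattice $D$). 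The countable set $\{g_n\}$ generates a band $B$ in $D$ which, by the $\sigma$-PP property, is a projection band in $D$. The key point is then that the carrier of $B$ in $D$ and the carrier of $f$ in $C(K)$ determine the same clopen/closed set in $K$: one checks $\{h \ne 0\} \subseteq \overline{\{f \ne 0\}}$ for $h$ in the band generated by $\{g_n\}$, and conversely that the band projection in $D$, extended by uniform continuity and density, yields a band projection in $C(K)$ onto the band of $f$. Hence $\overline{\{f\ne 0\}}$ is clopen, and $K$ is basically disconnected.

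For the converse, suppose $K$ is basically disconnected. I would exhibit an explicit dense sublattice with the $\sigma$-PP property. A natural candidate is the sublattice $D$ of $C(K)$ consisting of all finite $\mathbb{R}$-linear combinations $\sum_{i=1}^n a_i \one_{U_i} f_i$ where the $U_i$ are clopen and $f_i \in C(K)$ — but a cleaner choice is to take $D$ to be the sublattice generated by $C(K) \cdot \{\one_U : U \text{ clopen}\}$, or even just a suitable dense subalgebra-sublattice. Density is immediate from Stone--Weierstrass once $D$ separates points and contains constants (clopen sets separate points in a basically, indeed even in a zero-dimensional, space — and basically disconnected compact Hausdorff spaces are zero-dimensional). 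The substantive part is the $\sigma$-PP property: given a countable $A \subseteq D$, the band it generates has carrier $\overline{\bigcup_{a \in A} \{a \ne 0\}}$; each $\{a \ne 0\}$ is a cozero set, a countable union of cozero sets is again cozero, and in a basically disconnected space the closure of a cozero set is clopen, say $= V$. Then multiplication by $\one_V$ is the desired band projection, and the point is that $D$ must be chosen closed under multiplication by such $\one_V$; this is why one takes $D$ to be generated as a sublattice (or algebra) by $C(K)$ together with the characteristic functions of \emph{all} clopen sets — or, more economically, one verifies that the band projection $h \mapsto \one_V h$ maps $D$ into $D$ directly from the description of its elements.

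The main obstacle I anticipate is the bookkeeping around which concrete dense sublattice to use and verifying it is genuinely closed under the relevant band projections without being so large that density or the sublattice structure becomes awkward — in other words, getting a single $D$ that simultaneously (a) separates points and contains $\one$ (for density), (b) is a sublattice, and (c) is invariant under multiplication by $\one_V$ for $V$ the closure of an arbitrary cozero set. The likely resolution is to let $D$ be the sublattice of $C(K)$ generated by all products $\one_U \cdot g$ with $U$ clopen and $g$ ranging over a fixed dense \emph{subalgebra} (e.g., the algebra generated by clopen characteristic functions, which is dense by Stone--Weierstrass), since such products are stable — up to finite lattice combinations — under further multiplication by clopen characteristic functions, and the closure of a cozero set built from finitely many elements of $D$ is, by basic disconnectedness, itself clopen. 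For the forward implication the obstacle is more routine: transferring the band projection from $D$ to $C(K)$ requires knowing the projection is a uniformly (equivalently, $\|\cdot\|_\infty$-) continuous lattice homomorphism on $D$ and extending it, which is exactly the kind of extension guaranteed by Theorem~\ref{univ} / uniform density, so this should go through cleanly once the carrier identification is established.
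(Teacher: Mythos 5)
First, note that the paper does not prove this proposition at all: it is quoted verbatim as Theorem~3.14 of~\cite{Bilokopytov:24}, so there is no in-paper argument to compare yours against; I can only assess your sketch on its own merits. On that basis: your backward direction is correct but over-engineered. If $K$ is basically disconnected you do not need to build any auxiliary sublattice generated by clopen characteristic functions: $C(K)$ itself is countably order complete, and the band generated by a countable set $\{f_n\}$ has carrier $\overline{\bigcup_n\{f_n\ne 0\}}$, the closure of a single cozero set (countable unions of cozero sets are cozero), which is clopen; multiplication by its characteristic function is the required projection. So take $D=C(K)$ and the direction is two lines; all the bookkeeping you anticipate about invariance of $D$ under multiplication by $\one_V$ simply disappears.

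The forward direction is where the substance lies, and your sketch has a genuine gap exactly at the step you label ``one checks $\{h\ne 0\}\subseteq\overline{\{f\ne 0\}}$ for $h$ in the band generated by $\{g_n\}$.'' That band is the double disjoint complement of $\{g_n\}$ computed \emph{inside} $D$, and nothing in the $\sigma$-PP hypothesis prevents it from being much larger than the functions supported in $\overline{\{f\ne 0\}}$: if $D$ happened to contain no nonzero element disjoint from $f$, the band would be all of $D$, the extended projection would be the identity, its clopen set would be $K$, and you would have proved nothing about $\overline{\{f\ne 0\}}$. Ruling this out requires a Urysohn-type lemma for dense sublattices: for every $t_0\notin\overline{\{f\ne 0\}}$ one must manufacture, from norm density alone, a nonzero $u\in D_+$ with $u(t_0)>0$ vanishing on a neighbourhood of $\overline{\{f\ne 0\}}$ (e.g.\ $u=(d-2e)^+$ with $d\in D$ close to a suitable multiple of a Urysohn function and $e\in D$ close to $\one$); this is precisely the tool the paper imports as Proposition~2.1 of~\cite{Bilokopytov:24} in the proof of Lemma~\ref{bas-IuY-ICK}. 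With it, $u\perp g_n$ for all $n$, so the band projection kills $u$, and the clopen set $V$ of the extended projection satisfies $V\subseteq\overline{\{f\ne 0\}}$; combined with $\{f\ne 0\}\subseteq V$ this closes the argument. Two further small repairs: your explicit truncation $(g_n-\tfrac1n\one)^+\wedge f$ does \emph{not} stay in $D$ (neither $\one$ nor $f$ need belong to $D$), so you must either use an $e\in D$ approximating $\one$ in its place or, better, invoke Proposition~\ref{maj-o-dense}\eqref{maj-o-dense-adh} as you suggest — which has the added benefit of giving $0\le g_n\le f$, hence $\{g_n\ne 0\}\subseteq\{f\ne 0\}$, which is what the disjointness step above needs. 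The extension step itself is fine: the band projection satisfies $0\le P\le I$ on $D$, hence is a contraction for $\norm{\cdot}_\infty$ (or extend via Theorem~\ref{univ-ob}), and its extension is an idempotent with $0\le\widehat{P}\le I$, hence a band projection on $C(K)$, hence multiplication by $\one_V$ for a clopen $V$. So the architecture is right, but the carrier identification is the one real idea of the proof and it is missing, not routine, as written.
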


The following fact is Lemma~7.25 in~\cite{Aliprantis:03}. The
statement of the lemma in~\cite{Aliprantis:03} requires that $\Omega$
be extremally  disconnected, but it is easy to
see that the proof remains valid when $\Omega$ is just basically
disconnected.

\begin{lemma}
  Let $\Omega$ be a basically disconnected topological space~$\Omega$,
  $U$ an open subset of $\Omega$ and $f\in C(U)$. Then $f$ extends
  uniquely to a continuous function from $\overline{U}$
  to~$\overline{\mathbb R}$.
\end{lemma}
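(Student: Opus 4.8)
The plan is to handle uniqueness by a density argument, then to write down an explicit formula for the extension in terms of super-level sets, check that it restricts to $f$, and reduce its continuity to a single disjointness statement; only that last statement will use the hypothesis on~$\Omega$.

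Uniqueness is immediate: $U$ is dense in $\overline{U}$ and $\overline{\mathbb R}$ is Hausdorff, so any two continuous maps $\overline{U}\to\overline{\mathbb R}$ agreeing on $U$ coincide. For existence, for each $r\in\mathbb R$ put $A_r=\{x\in U:f(x)>r\}$ and $B_r=\{x\in U:f(x)<r\}$; since $U$ is open in $\Omega$, these are open in $\Omega$, the families $(A_r)_r$ and $(B_r)_r$ are monotone in $r$, and $A_s\cap B_r=\emptyset$ whenever $r\le s$. All closures below are taken in~$\Omega$. I would define $\bar f\colon\overline{U}\to\overline{\mathbb R}$ by
\[
  \bar f(p)=\sup\bigl\{\,r\in\mathbb R:p\in\overline{A_r}\,\bigr\},
\]
with $\sup\emptyset=-\infty$ and $\sup\mathbb R=+\infty$. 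That $\bar f|_U=f$ is routine: for $p\in U$ one has $p\in A_r$ when $r<f(p)$, while continuity of $f$ and openness of $U$ give, for $r>f(p)$, an open neighbourhood $W\subseteq U$ of $p$ with $f<r$ on $W$, so $p\notin\overline{A_r}$. The crux is the claim, call it $(\star)$, that $\overline{A_s}\cap\overline{B_r}=\emptyset$ whenever $r<s$.

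Granting $(\star)$, continuity is soft bookkeeping. Using $(\star)$ (no $p$ lies in $\overline{A_s}\cap\overline{B_r}$ with $r<s$) one gets $\bar f(p)\le\inf\{r:p\in\overline{B_r}\}$; the reverse inequality holds because $\overline{U}=\overline{A_a}\cup\overline{B_a}\cup\overline{\{f=a\}\cap U}$ (closure commutes with finite unions) and $\{f\le a\}\cap U\subseteq B_b$ for $a<b$, so $p\notin\overline{A_a}$ forces $p\in\overline{B_b}$. Hence $\bar f(p)=\inf\{r:p\in\overline{B_r}\}$ for all $p\in\overline{U}$, and therefore, for each $t\in\mathbb R$,
\[
  \{\bar f<t\}=\bigcup_{r<t}\bigl(\overline{U}\setminus\overline{A_r}\bigr),
  \qquad
  \{\bar f>t\}=\bigcup_{r>t}\bigl(\overline{U}\setminus\overline{B_r}\bigr),
\]
each a union of relatively open subsets of $\overline{U}$. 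Since the intervals $[-\infty,t)$ and $(t,+\infty]$ ($t\in\mathbb R$) form a subbasis of $\overline{\mathbb R}$, this makes $\bar f$ continuous, and the proof is complete modulo~$(\star)$.

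It remains to prove $(\star)$, and this is where basic disconnectedness enters. The disjoint open sets $A_s$ and $B_r$ are separated inside $U$ by the continuous function $g:=\bigl((f\wedge s)\vee r-r\bigr)/(s-r)\in C(U)$, which is identically $1$ on $\{f\ge s\}\supseteq A_s$ and identically $0$ on $\{f\le r\}\supseteq B_r$; thus $A_s$ and $B_r$ lie in the disjoint cozero subsets $\{g>1/2\}$ and $\{g<1/2\}$ of~$U$. In a basically disconnected space the closure of a cozero set is clopen, so disjoint cozero sets have disjoint closures; $(\star)$ follows once this separation is transported from cozero sets of $U$ to disjoint cozero sets of~$\Omega$. (When $\Omega$ is extremally disconnected this transport is unnecessary: there the closure of every open set is clopen, so the disjoint open sets $A_s$ and $B_r$ already have disjoint closures, which is presumably the form of the original argument.) The main obstacle is exactly this transport step — promoting a separation of $A_s$ and $B_r$ inside the open set $U$ to a statement about their closures in the ambient $\Omega$; everything else (the explicit formula, that it extends $f$, and the continuity computation above) is formal.
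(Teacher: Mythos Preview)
You have correctly isolated the crux, and you are right not to claim the ``transport'' step as done: it cannot be done in general, because the lemma as stated is false. Take any basically disconnected space $\Omega$ that is not extremally disconnected; then there is an open $V\subseteq\Omega$ with $\overline{V}$ not open. Set $W=\Omega\setminus\overline{V}$. The sets $V$ and $W$ are disjoint and open, and any point of $\overline{V}$ that is not interior to $\overline{V}$ lies in $\overline{V}\cap\overline{W}$. Put $U=V\cup W$ and let $f=\chi_V\in C(U)$ (continuous because $V$ and $W$ are clopen in $U$). A continuous extension $\bar f\colon\overline{U}\to\overline{\mathbb R}$ would have to take the value $1$ on $\overline{V}$ and $0$ on $\overline{W}$, which is impossible at a point of $\overline{V}\cap\overline{W}$. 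So no extension exists, and in particular your $(\star)$ fails for this $U$ and $f$ with $r=0$, $s=1$.

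The paper does not give its own proof; it cites Lemma~7.25 of Aliprantis--Burkinshaw (stated there for extremally disconnected $\Omega$) and asserts that ``the proof remains valid'' for basically disconnected $\Omega$. The counterexample shows that this assertion is incorrect for arbitrary open $U$. What rescues the paper is that it only ever applies the lemma with $U=\{u\ne 0\}$ a \emph{cozero} set of $\Omega$, and there your transport goes through cleanly: if $U=\{h>0\}$ with $0\le h\in C(\Omega)$ and $0\le g\in C(U)$, then the function equal to $g\wedge h$ on $U$ and $0$ elsewhere lies in $C(\Omega)$ and has cozero set exactly $\{g>0\}$, so every cozero subset of $U$ is already cozero in $\Omega$. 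Hence $A_s$ is cozero in $\Omega$, its closure is clopen, and since $B_r$ is open and disjoint from $A_s$ one gets $\overline{A_s}\cap\overline{B_r}=\emptyset$. With $(\star)$ in hand, the rest of your argument is a complete and correct proof of the lemma under the extra hypothesis that $U$ is a cozero set of $\Omega$; that hypothesis (or extremal disconnectedness) is genuinely needed.
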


\begin{example}\label{COmega-Iu-bas}
  Suppose that the topological space $\Omega$ in
  Example~\ref{COmega-ucompl} is basically disconnected. The
  set~$\Omega_0$, constructed in the example, is an open
  $F_\sigma$-set, hence $\overline{\Omega_0}$ is clopen; in
  particular, it is also basically disconnected. By the preceding
  lemma, the function $h$ in the example extends uniquely to a
  function in $C(\overline{\Omega_0})$, Hence, $I_u$ is lattice
  isometric to $C(\overline{\Omega_0})$.
\end{example}

\begin{lemma}\label{bas-IuY-ICK}
  Let $K$ be a basically disconnected compact Hausdorff space, $Y$
  a norm dense sublattice of $C(K)$, and $u\in Y_+$. Then $I_u^Y$ is
  dense in $\bigl(I^{C(K)}_u,\norm{\cdot}_u\bigr)$.
\end{lemma}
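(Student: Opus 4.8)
The plan is to pass to the Kakutani‑type representation of $\bigl(I^{C(K)}_u,\norm{\cdot}_u\bigr)$ and to observe that $I_u^Y$ already contains enough ``clopen'' elements to be dense. Put $\Omega_0=\{u\ne 0\}$. Since $\Omega_0$ is a cozero set and $K$ is basically disconnected, $\overline{\Omega_0}$ is clopen, so by Examples~\ref{COmega-ucompl} and~\ref{COmega-Iu-bas} the map $\psi$ that sends $g\in I^{C(K)}_u$ to the unique continuous extension to $\overline{\Omega_0}$ of the function $g/u$ on $\Omega_0$ is a linear isometry of $\bigl(I^{C(K)}_u,\norm{\cdot}_u\bigr)$ onto $C(\overline{\Omega_0})$, with $\psi(u)=\one$. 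Being a surjective linear isometry, $\psi$ is a homeomorphism, so it suffices to prove that $\psi(I_u^Y)$ is dense in $C(\overline{\Omega_0})$.

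The key step is: for every clopen $V\subseteq K$ one has $u_V\coloneqq u\cdot\one_V\in I_u^Y$. Indeed, $\one_V\in C(K)$ because $V$ is clopen, so norm‑density of $Y$ lets us pick $f\in Y$ with $\norm{f-\bigl(\one_V-\one_{K\setminus V}\bigr)}_\infty<\tfrac12$. Then $f>\tfrac12$ on $V$ and $f<-\tfrac12$ on $K\setminus V$, whence $f^+\in Y$ vanishes on $K\setminus V$ and satisfies $f^+>\tfrac12$ on $V$. Therefore, taking any $N>2\norm{u}_\infty$, the element $u\wedge(Nf^+)\in Y$ equals $u$ at each point of $V$ and $0$ at each point of $K\setminus V$, i.e.\ $u\wedge(Nf^+)=u_V$; since $0\le u_V\le u$, this shows $u_V\in I_u^Y$. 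A direct computation with the definition of $\psi$ then gives $\psi(u_V)=\one_{V\cap\overline{\Omega_0}}$.

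Now every clopen subset of $\overline{\Omega_0}$ has the form $V\cap\overline{\Omega_0}$ for a clopen $V\subseteq K$ (such a subset is already clopen in $K$ because $\overline{\Omega_0}$ is), so $\psi(I_u^Y)$ is a linear subspace of $C(\overline{\Omega_0})$ containing $\one_C$ for every clopen $C\subseteq\overline{\Omega_0}$. As a clopen subspace of the basically disconnected compact Hausdorff space $K$, the space $\overline{\Omega_0}$ is compact Hausdorff and zero‑dimensional, so the linear span of these indicators is uniformly dense in $C(\overline{\Omega_0})$; hence so is $\psi(I_u^Y)$. Transporting this back through $\psi$ shows that $I_u^Y$ is dense in $\bigl(I^{C(K)}_u,\norm{\cdot}_u\bigr)$.

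The one point to handle with care is the identity $u\wedge(Nf^+)=u_V$: it needs $f^+$ to vanish on \emph{all} of $K\setminus V$, which is available precisely because $V$ is clopen, so that a mere sup‑norm approximation of $\one_V-\one_{K\setminus V}$ by an element of $Y$ can be promoted to an \emph{exact} match after a single lattice operation. (Sup‑norm density of $Y$ alone would be too weak to control $\norm{\cdot}_u$ near the zero set of $u$; it is the clopen structure furnished by basic disconnectedness, together with the lattice operations of $Y$, that gets around this.) One should also record the two standard facts used above: the representation map $\psi$ of Example~\ref{COmega-Iu-bas} is linear, by uniqueness of continuous extensions, and a basically disconnected compact Hausdorff space is zero‑dimensional.
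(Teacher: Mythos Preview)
Your proof is correct, and it takes a genuinely different route from the paper's. Both arguments begin identically, by passing to the Kakutani representation $\psi\colon\bigl(I^{C(K)}_u,\norm{\cdot}_u\bigr)\to C(L)$ with $L=\overline{\{u\ne 0\}}$ (this is the inverse of the paper's map~$J$), and both reduce the problem to showing that the image of $I_u^Y$ is dense in $C(L)$. From there the approaches diverge. The paper appeals to Stone--Weierstrass: it invokes an external Urysohn-type lemma for dense sublattices (Proposition~2.1 of~\cite{Bilokopytov:24}) to produce, for each pair $s\ne t$ in~$L$, an element $v\in I_u^Y$ with $\psi(v)(s)=0$ and $\psi(v)(t)=1$. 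You instead exploit zero-dimensionality directly: you show by an explicit lattice computation that $u\one_V\in I_u^Y$ for every clopen $V\subseteq K$, observe that $\psi(u\one_V)=\one_{V\cap L}$, and then use that the span of clopen indicators is dense in $C(L)$ because $L$ is zero-dimensional. Your argument is more self-contained---it avoids the citation to~\cite{Bilokopytov:24} and needs only the elementary observation that basically disconnected compact Hausdorff spaces are zero-dimensional---while the paper's point-separation argument is closer in spirit to how Stone--Weierstrass is typically deployed and may adapt more readily to situations without an abundant clopen structure.
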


\begin{proof}
  Let $L=\overline{\{u\ne 0\}}$. As in Example~\ref{COmega-Iu-bas}, we
  can construct a surjective lattice isometry
  \begin{math}
    J\colon C(L)\to\bigl(I^{C(K)}_u,\norm{\cdot}_u\bigr)
  \end{math}
  such that $(Jf)(s)=u(s)f(s)$ whenever $f\in C(L)$ and $s\in L$.
  It suffices to show that $J^{-1}(I_u^Y)$ is dense in $C(L)$. We will
  deduce this from Stone-Weierstrass Theorem. Clearly,
  $\one_L=J^{-1}u\in J^{-1}(I_u^Y)$.

  Suppose $s\ne t$ in~$L$. By a variant of Urysohn's lemma for dense
  sublattices of $C(K)$ as in Proposition~2.1 in~\cite{Bilokopytov:24},
  we can find $v\in Y$ such that $0\le v\le u$, $v$ vanishes on some
  neighborhood $U$ of $s$ in $K$ and agrees with $u$ on some
  neighborhood $V$ of $t$ in~$K$. Since $s\in L$, we can find a net
  $(s_\alpha)$ in $\{u\ne 0\}$ such that $s_\alpha\to s$. Then
  $s_\alpha\in U$ for all sufficiently large $\alpha$, so that
  \begin{math}
    (J^{-1}v)(s_\alpha)=\frac{v(s_\alpha)}{u(s_\alpha)}=0.
  \end{math}
  Passing to the limit, we get $(J^{-1}v)(s)=0$. Arguing similarly, we
  get $(J^{-1}v)(t)=1$. Hence, by Stone-Weierstrass Theorem, $J^{-1}(I_u^Y)$
  is dense in $C(L)$
\end{proof}

We say that a vector lattice is \term{countably order complete} if
every countable set which is bounded above has supremum. In the
literature, this concept is often called ``$\sigma$-order complete'',
but we would rather avoid the latter name as it can be confused with
completeness with respect to $\sigma$-order convergence.
Recall that a compact Hausdorff space $K$ is basically disconnected
iff $C(K)$ is countably order complete.

The following result is analogous to Theorem~8.6 in~\cite{Quinn:75},
see also~\cite{Bondarev:74}.

\begin{theorem}\label{sPP-clos}
  Suppose that $X$ has the $\sigma$-PP. Then
  $\uadh{X}{1}=\ucl{X}$. Moreover, this set is
  the least countably order complete sublattice of
  $X^\delta$ containing~$X$.
\end{theorem}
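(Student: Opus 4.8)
The plan is to first localize the problem to principal ideals and then invoke the structure theory of $C(K)$ spaces together with the $\sigma$-PP property. The key reduction is that $X$ has the $\sigma$-PP property, and by Lemma~\ref{sPP-ideals} so does every principal ideal $I_u^X$ for $u \in X_+$. Since $X$ is majorizing in $X^\delta$, the completion $\Xru = \ucl{X}$ may (by Corollary~\ref{ru-amb} and Proposition~\ref{ru-IX}) be analyzed ideal-by-ideal: for each $u \in X_+$, the principal ideal of $u$ in $\Xru$ is the $\norm{\cdot}_u$-closure $\Icl{u}{X}$ of $I_u^X$ inside $I_u^{X^\delta}$. Now $I_u^{X^\delta}$ is a uniformly complete vector lattice with strong unit $u$, hence lattice isometric to $C(K_u)$ for a compact Hausdorff $K_u$; and $I_u^X$ is a norm-dense sublattice of this $C(K_u)$ which carries the $\sigma$-PP property. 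By Proposition~\ref{K-bas-sPP}, $K_u$ is basically disconnected, so $C(K_u)$ is countably order complete.

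\medskip

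Next I would establish the adherence-equals-closure claim. The point is that $\uadh{X}{1}$, being (since $X$ is majorizing) the set of uniform limits of sequences in $X$ with regulators in $X$, meets each $I_u^{X^\delta}$ in exactly $\Icl{u}{X}$ by Proposition~\ref{loc-adh} and Corollary~\ref{IuX-cofin}. So it suffices to show $\uadh{X}{1}$ is already uniformly closed, i.e. that $\bigcup_{u \in X_+}\Icl{u}{X}$ is uniformly closed in $I(X)$; by Proposition~\ref{ideals-up} this follows once we know $\Icl{u}{X}$ is an \emph{ideal} in $\Icl{v}{X}$ whenever $u \le v$ in $X_+$. This is where Lemma~\ref{bas-IuY-ICK} enters: applied with $K = K_v$ and the norm-dense sublattice $Y = I_v^X$ of $C(K_v)$, it gives that $I_u^{I_v^X} = I_u^X$ is $\norm{\cdot}_u$-dense in $I_u^{C(K_v)} = I_u^{X^\delta}$, so $\Icl{u}{X}$ is all of $I_u^{X^\delta}$ — which is visibly an ideal in $I_v^{X^\delta} \supseteq \Icl{v}{X}$, hence an ideal in $\Icl{v}{X}$. (One must check the containment $\Icl{u}{X} \subseteq \Icl{v}{X}$ and that $\Icl{u}{X} = \Icl{v}{X} \cap I_u^{X^\delta}$, which is routine.) Thus Proposition~\ref{ideals-up} applies and $\uadh{X}{1} = \ucl{X} = \Xru$.

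\medskip

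For the ``moreover'' clause I would argue in two directions. First, $\ucl{X}$ is countably order complete: given a countable $A \subseteq \ucl{X}$ bounded above, we may (using that $X$ majorizes, via Proposition~\ref{maj-o-dense}) fit $A$ and an upper bound inside some $\Icl{u}{X} = I_u^{X^\delta}$, which is $C(K_u)$ with $K_u$ basically disconnected, hence countably order complete; the supremum taken there is the supremum in $\ucl{X}$ since $\ucl{X}$ is order dense, hence regular, in $X^\delta$. Second, $\ucl{X}$ is the \emph{least} such sublattice: any countably order complete sublattice $W$ of $X^\delta$ containing $X$ is, in particular, complete under $\sigma$-order convergence of sequences bounded by elements of $X$ — more carefully, one shows $W$ is uniformly complete (a uniformly Cauchy sequence lies in a principal ideal, is order-bounded there, and countable order completeness of $W$ supplies the $\sigma$-order limit, whence a uniform limit by the argument in Lemma~\ref{so-ru-compl}); a uniformly complete sublattice between $X$ and $X^\delta$ is uniformly closed by Proposition~\ref{u-compl-closed} (it is majorizing), so it contains $\ucl{X}$.

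\medskip

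The main obstacle I anticipate is bookkeeping around the identifications: verifying that the $C(K_u)$-representation of $I_u^{X^\delta}$ is compatible as $u$ increases (so that the ideals $\Icl{u}{X}$ genuinely sit inside one another in the way Proposition~\ref{ideals-up} requires), and that suprema of countable sets computed inside one $I_u^{X^\delta}$ agree with those in $X^\delta$ and in $\ucl{X}$ — this rests on order density/regularity of $X$ in $X^\delta$ and of order ideals in $X^\delta$. The genuinely substantive input is Proposition~\ref{K-bas-sPP} (the $\sigma$-PP property forces basic disconnectedness of each representing $K_u$), and everything else is assembling Lemmas~\ref{sPP-ideals} and~\ref{bas-IuY-ICK} with Proposition~\ref{ideals-up}.
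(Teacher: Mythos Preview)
Your overall strategy---localize to principal ideals, use the $\sigma$-PP property to force basically disconnected representing spaces, apply Lemma~\ref{bas-IuY-ICK} to make the closures nest as ideals, then invoke Proposition~\ref{ideals-up}---is exactly the paper's approach. However, there is a genuine error in your choice of $C(K)$ space.

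You take $C(K_u)$ to be $I_u^{X^\delta}$ and assert that $I_u^X$ is norm-dense in it. This density is never justified and is in fact false in general. Your argument in the second paragraph explicitly concludes ``$\Icl{u}{X}$ is all of $I_u^{X^\delta}$''; since $X$ is majorizing in $X^\delta$, this would give $\uadh{X}{1}=\bigcup_{u\in X_+}I_u^{X^\delta}=X^\delta$, which is the conclusion of Theorem~\ref{PP-clos} and requires the full PP property. For a counterexample under only $\sigma$-PP, take any countably order complete, uniformly complete vector lattice that is not order complete (e.g.\ $C(K)$ for $K$ basically but not extremally disconnected): then $X^{\mathrm{ru}}=X\subsetneq X^\delta$. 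Note also that Proposition~\ref{K-bas-sPP} itself requires the $\sigma$-PP sublattice to be \emph{dense} in $C(K)$, so you cannot invoke it to deduce that $K_u$ is basically disconnected without already knowing the density you need; the reasoning is circular at that point.

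The paper avoids this by representing $J_u:=\Icl{u}{X}$ (not $I_u^{X^\delta}$) as $C(K)$. Then $I_u^X$ is dense in $J_u$ \emph{by construction}, Proposition~\ref{K-bas-sPP} legitimately gives $K$ basically disconnected, and Lemma~\ref{bas-IuY-ICK} applied inside $J_u$ yields that $J_x$ is the principal ideal $I_x^{J_u}$, hence an ideal in $J_u$. With this single correction your outline, including the ``moreover'' clause, matches the paper's proof.
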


\begin{proof}
  It is clear that $\uadh{X}{1}\subseteq\ucl{X}$. We will prove that
  $\ucl{X}\subseteq\uadh{X}{1}$.

  Fix $u\in X_+$. We can view the ideal $I_u^X$ as a subset of the
  ideal~$I_u^{X^\delta}$. We will denote by $J_u$ the closure of
  $I_u^X$ in $I_u^{X^\delta}$ under~$\norm{\cdot}_u$. Since $X^\delta$
  is uniformly complete, the space
  $\bigl(I_u^{X^\delta},\norm{\cdot}_u\bigr)$ is complete. It follows
  that $J_u$ is the norm completion of
  $\bigl(I_u^X,\norm{\cdot}\bigr)$.  By Krein-Kakutani's
  representation theorem, we can represent $J_u$ as $C(K)$ for some
  compact Hausdorff space $K$. By Lemma~\ref{sPP-ideals},
  $I_u^X$ has the $\sigma$-PP. By
  Proposition~\ref{K-bas-sPP}, $K$ is basically disconnected, hence
  $C(K)$ is countably order complete. It follows that $J_u$ is
  countably order complete.

  Let $x\in X$ with $0\le x\le u$. Applying Lemma~\ref{bas-IuY-ICK}
  with $Y=I_u^X$ and observing that $I_x^Y=I_x^X$, we conclude that
  $I_x^X$ is dense in $\bigl(I_x^{C(K)},\norm{\cdot}_x\bigr)$.
  It follows that $J_x$ can be identified with $I_x^{C(K)}$. Since
  $C(K)=J_u$, it follows that $J_x$ is an ideal in~$J_u$.

  We have proved that $J_u$ is countably order complete and $J_x$ is
  an ideal in $J_u$ whenever $0\le x\le u$.
  By Corollary~\ref{IuX-cofin} and
  Proposition~\ref{ideals-up} we have that that
  $\uadh{X}{1}=\bigcup_{x\in X_+}J_x=\ucl{X}$ and $J_u$ is the
  principal ideal generated by $u$ in~$\uadh{X}{1}$.

  It is now easy to see that $\uadh{X}{1}$ is countably order
  complete. Indeed, if $C$ is a countable subset of $\uadh{X}{1}_+$
  and $C\le w$ for some $w\in \uadh{X}{1}_+$, then we can find
  $x\in X$ with $w\le x$, then $C\le x$ in $J_x$, hence $\sup C$
  exists in $J_x$ and, therefore, in~$\uadh{X}{1}_+$, because $J_x$ is
  regular in~$\uadh{X}{1}$.

  It is a standard fact that every countably order complete vector
  lattice is uniformly complete. If follows from the definition of
  $\Xru$ that $\Xru\subseteq\uadh{X}{1}$. We conclude that
  $\Xru=\uadh{X}{1}$, and this is the least countably order
  complete sublattice of $X^\delta$ containing~$X$.
\end{proof}

The following theorem is implicitly contained in~\cite{Veksler:69}.

\begin{theorem}\label{PP-clos}
  Suppose that $X$ has the PP. Then
  $\uadh{X}{1}=\ucl{X}=X^\delta$.
\end{theorem}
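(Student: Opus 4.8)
The plan is to reduce this to the previous theorem, Theorem~\ref{sPP-clos}, plus an extra argument showing that under the full PP property (every band is a projection band), the countably order complete sublattice produced there is actually all of $X^\delta$. First observe that the PP property trivially implies the $\sigma$-PP property, so Theorem~\ref{sPP-clos} already gives $\uadh{X}{1}=\ucl{X}=\Xru$, and this common set is the least countably order complete sublattice of $X^\delta$ containing~$X$. So it remains to prove $\Xru=X^\delta$, i.e., that the least countably order complete sublattice of $X^\delta$ containing $X$ is $X^\delta$ itself. Since $X^\delta$ is order complete (hence countably order complete) and contains $X$, we have $\Xru\subseteq X^\delta$ automatically; the content is the reverse inclusion.

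For the reverse inclusion, I would work locally in principal ideals, exactly as in the proof of Theorem~\ref{sPP-clos}. Fix $u\in X_+$ and recall the notation $J_u$ for the closure of $I_u^X$ in $\bigl(I_u^{X^\delta},\norm{\cdot}_u\bigr)$; we showed there that $J_u$ is the principal ideal of $u$ in $\uadh{X}{1}=\Xru$, that it is countably order complete, and (via Krein--Kakutani) is lattice isometric to some $C(K)$. The key point now is that when $X$ has the \emph{full} PP property, $I_u^X$ inherits it (just as $\sigma$-PP is inherited by ideals in Lemma~\ref{sPP-ideals}, the same band-intersection argument works for arbitrary bands), and so by the strengthened version of Proposition~\ref{K-bas-sPP} — a dense sublattice of $C(K)$ with the PP property forces $K$ to be \emph{extremally} disconnected, not merely basically disconnected — we get that $K$ is extremally disconnected, hence $C(K)=J_u$ is order complete. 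Therefore $J_u$ is an order complete sublattice of $I_u^{X^\delta}$ containing $I_u^X$. But $I_u^X$ is order dense in $I_u^{X^\delta}$ (order density passes to principal ideals), so every element of $I_u^{X^\delta}$ is a supremum in $I_u^{X^\delta}$ of elements of $I_u^X$; since $J_u$ is order complete and $J_u$ is regular in $I_u^{X^\delta}$, that supremum is computed the same way in $J_u$, giving $J_u=I_u^{X^\delta}$.

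Finally I would patch these local statements together. Since $\uadh{X}{1}=\bigcup_{u\in X_+}J_u$ by Corollary~\ref{IuX-cofin}/Proposition~\ref{ideals-up}, and each $J_u=I_u^{X^\delta}$, we get $\uadh{X}{1}=\bigcup_{u\in X_+}I_u^{X^\delta}$. It remains to note that this union is all of $X^\delta$: given $0\le z\in X^\delta$, since $X$ is majorizing in $X^\delta$ there is $u\in X_+$ with $z\le u$, whence $z\in I_u^{X^\delta}$. Thus $\uadh{X}{1}=X^\delta$, which together with $\uadh{X}{1}\subseteq\ucl{X}\subseteq X^\delta$ finishes the proof.

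I expect the one genuinely delicate step to be the claim that the full PP property upgrades ``basically disconnected'' to ``extremally disconnected'' in Proposition~\ref{K-bas-sPP} — one must check that the cited Theorem~3.14 of~\cite{Bilokopytov:24} (or its proof) really does give the extremal version under the stronger hypothesis, and correspondingly that the extension lemma for continuous functions used in Lemma~\ref{bas-IuY-ICK} still applies; the rest is routine bookkeeping with the local ideals, since it closely parallels the argument already carried out for Theorem~\ref{sPP-clos}.
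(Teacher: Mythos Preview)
Your proposal is correct and is essentially the paper's own approach: the paper's proof simply says ``similar [to Theorem~\ref{sPP-clos}] except that we use Theorem~3.12 in~\cite{Bilokopytov:24} instead of Theorem~3.14,'' which gives that $K$ is extremally disconnected and hence $J_u=C(K)$ is order complete, and then concludes $\uadh{X}{1}$ is order complete, whence $\uadh{X}{1}=X^\delta$. Your only uncertainty---whether the PP version of Proposition~\ref{K-bas-sPP} is available---is resolved by that citation, and your local argument that $J_u=I_u^{X^\delta}$ is a harmless variant of the paper's global observation that an order complete, order dense, majorizing sublattice of $X^\delta$ must be all of~$X^\delta$.
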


\begin{proof}
  The proof is similar except that we use Theorem~3.12
  in~\cite{Bilokopytov:24} instead of Theorem~3.14: $C(K)$ has a dense
  sublattice with the PP iff $K$ is extremally disconnected
  iff $C(K)$ is order complete. The proof then yields that
  $\uadh{X}{1}$ is order complete. It follows that
  $\uadh{X}{1}=X^\delta$.
\end{proof}


It is well known that if a
sublattice $X$ of a vector lattice $Y$ is order dense and
order complete then it is an ideal in~$Y$; see, e.g., Theorem~1.40
in~\cite{Aliprantis:03}. A similar argument shows that if $X$ is super
order dense in $Y$ and countably order complete then it is an ideal in~$Y$.

\begin{corollary}\label{CK-dense}
  Let $X$ be a sublattice of some $C(K)$ space such
  that $\one\in X$. Each of the following conditions implies that $X$
  is norm dense in $C(K)$:
  \begin{enumerate}
  \item\label{CK-dense-PP} $X$ is order dense and has the PP;
  \item\label{CK-dense-sPP} $X$ is super order dense and has
    the $\sigma$-PP;
  \item\label{CK-dense-so} $X$ is super order dense and $\sigma$-order
    convergence and uniform convergence agree on sequences in~$X$.
  \end{enumerate}
\end{corollary}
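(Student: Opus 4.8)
The plan is to deduce each of the three implications from results established earlier in the paper, reducing in every case to the statement that the relevant closure of $X$ inside $C(K)$ coincides with $C(K)$ itself. Throughout, $C(K)$ is uniformly complete (being a Banach lattice), and $X$ is a sublattice containing $\one$, hence a majorizing sublattice of $C(K)$; moreover $\one$ is a strong unit of $X$, so by Krein–Kakutani $\bigl(X,\norm{\cdot}_{\one}\bigr)$ embeds densely into its norm completion, and this norm completion is precisely $\Xru$ by Remark~\ref{str-un-ru-compl}. Thus in all three cases it suffices to prove $\Xru=C(K)$; equivalently, by Corollary~\ref{ru-amb} (applicable since $X$ is majorizing in the uniformly complete lattice $C(K)$), it suffices to show that the uniform closure $\ucl{X}$ of $X$ in $C(K)$ equals $C(K)$.

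For~\eqref{CK-dense-PP}: since $X$ has the PP property, Theorem~\ref{PP-clos} gives $\ucl{X}=X^\delta$. Now $X$ is order dense in $C(K)$ by hypothesis and order complete (PP implies order complete, as noted in the proof of Theorem~\ref{PP-clos} via~\cite{Bilokopytov:24}), so by the cited ideal theorem $X$ is an ideal in $C(K)$; an order-dense ideal containing $\one$ must be all of $C(K)$, since for $0\le f\le\one$ one has $f\in I_{\one}(X)\subseteq X$. Hence $X=C(K)$ and in particular $X$ is norm dense. (Alternatively, one may observe $C(K)\subseteq X^\delta=\ucl{X}\subseteq C(K)$ directly once $X$ is known to be order dense, giving $\ucl{X}=C(K)$, and then invoke that uniform density in a $C(K)$-space is norm density since uniform convergence there is dominated by norm convergence.)

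For~\eqref{CK-dense-sPP}: since $X$ has the $\sigma$-PP property, Theorem~\ref{sPP-clos} gives that $\uadh{X}{1}=\ucl{X}$ is the least countably order complete sublattice of $X^\delta$ containing $X$. As $X$ is super order dense in $C(K)$ by hypothesis and $C(K)$ is itself a sublattice of $X^\delta$ containing $X$, we only need to know that $C(K)$ is countably order complete in order to conclude $\ucl{X}\subseteq C(K)$, and hence — combined with super order density, which forces $\ucl{X}$ to contain $C(K)$ by the super-order-dense analogue of the ideal theorem quoted just before the corollary — that $\ucl{X}=C(K)$. Here is the one genuine subtlety: $C(K)$ need not be countably order complete for an arbitrary $K$. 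So instead I would argue directly: $X$ super order dense and $\ucl{X}$ countably order complete make $X$ an ideal in $\ucl{X}$, hence (being majorizing, so $\one$ is a strong unit of $\ucl{X}$ too) $X=\ucl{X}=\Xru$; then $X$ is uniformly dense in $C(K)$ by Corollary~\ref{ru-amb}, and uniform density in a $C(K)$-space is norm density. The same line of reasoning handles~\eqref{CK-dense-so} verbatim, using Theorem~\ref{so-ru-scompl} (which gives $\uadh{X}{1}=\ucl{X}$ and realizes it as an intersection of $\sigma$-order-complete sublattices, each of which is uniformly complete by Lemma~\ref{so-ru-compl}) in place of Theorem~\ref{sPP-clos}, together with super order density and the super-order-dense ideal theorem to pass from $X\subseteq\ucl{X}$ back to $X=\ucl{X}$.

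The main obstacle, as indicated, is that one cannot simply quote "$C(K)$ is countably order complete'' — that holds only when $K$ is basically disconnected. The right way around it is to exploit super order density: a super-order-dense sublattice of a vector lattice which is itself countably order complete is automatically an ideih there, hence (when it contains a strong unit of the ambient space) the whole space; this forces $X$ to coincide with its own uniform closure and with $\Xru$, after which Corollary~\ref{ru-amb} delivers norm density into $C(K)$. Everything else is an assembly of Theorems~\ref{PP-clos}, \ref{sPP-clos}, \ref{so-ru-scompl}, Corollary~\ref{ru-amb}, Remark~\ref{str-un-ru-compl}, and the elementary fact that uniform density inside a $C(K)$-space implies norm density.
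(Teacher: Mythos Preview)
Your argument contains a genuine gap in parts~\eqref{CK-dense-sPP} and~\eqref{CK-dense-so}, stemming from a misapplication of the ideal lemma quoted just before the corollary. That lemma says: if $X$ is super order dense in $Y$ \emph{and $X$ is countably order complete}, then $X$ is an ideal in~$Y$. You invoke it with the hypothesis on the wrong space, assuming that $\ucl{X}$ countably order complete lets you conclude $X$ is an ideal in~$\ucl{X}$. This reversed direction is false: the sublattice of eventually constant sequences in $\ell_\infty$ is super order dense, contains~$\one$, and $\ell_\infty$ is order complete, yet this sublattice is not an ideal (and its uniform completion is $c$, not itself). So your conclusion $X=\ucl{X}=\Xru$ is unwarranted and in general wrong. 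A secondary issue in~\eqref{CK-dense-PP}: the claim ``PP implies order complete'' for $X$ itself is incorrect --- Theorem~\ref{PP-clos} tells you $\Xru$ is order complete, not~$X$.

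The paper applies the ideal lemma in the correct direction, to $\Xru\subseteq C(K)$: by Theorems~\ref{PP-clos} and~\ref{sPP-clos}, $\Xru$ is order complete (resp.\ countably order complete); and $\Xru$ is (super) order dense in $C(K)$ because $X$ is; hence $\Xru$ is an ideal in $C(K)$, and since $\one\in\Xru$ one gets $\Xru=C(K)$. For~\eqref{CK-dense-so} the paper does not use the ideal lemma at all: it argues directly that any $f\in C(K)_+$ is the supremum of an increasing sequence in $X$, hence a $\sigma$-order limit, hence (by Lemma~\ref{msod-so-ru}) a uniform limit, so $f\in\ucl{X}=\Xru$. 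Your alternative route for~\eqref{CK-dense-PP} --- observing $C(K)\subseteq X^\delta=\Xru\subseteq C(K)$ once the identifications via Corollary~\ref{ru-amb} and Theorem~\ref{Xru-int-Z} are set up --- is essentially the paper's argument in compressed form and does work.
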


\begin{proof}
  By Theorem~\ref{Xru-int-Z}, we may view $\Xru$ as a sublattice of
  $C(K)$. Since $X$ is order dense and majorizing in $C(K)$, the same
  is true for~$\Xru$; it follows that $\Xru$ is regular in $C(K)$. By
  Corollary~\ref{ru-amb}, $\Xru$ equals the relative uniform closure
  of $X$ in $C(K)$. Note that since relative uniform convergence on
  $C(K)$ agrees with norm convergence, this means that $X$ is norm
  dense in~$\Xru$. It suffices to show $\Xru=C(K)$.

  \eqref{CK-dense-PP}
  Note that $\Xru$ is order dense in $C(K)$; also, $\Xru$ is order complete by
  Theorem~\ref{PP-clos}. It follows that $\Xru$ is an ideal in
  $C(K)$. Since $\one\in\Xru$, we have $\Xru=C(K)$.

  \eqref{CK-dense-sPP}
  is proved similarly using Theorem~\ref{sPP-clos} and the remark
  before the corollary.

  \eqref{CK-dense-so}
  Let $f\in C(K)_+$. Find a sequence $(x_n)$ in
  $X$ such that $x_n\uparrow f$ in $C(K)$. In particular,
  $x_n\goesso f$ in $C(K)$. By Lemma~\ref{msod-so-ru}, $x_n\goesu f$ in
  $C(K)$, hence $f\in\ucl{X}=\Xru$.
\end{proof}

\begin{example}\label{ex:finL0}
  Let $(\Omega,\mu)$ be a semi-finite measure space, i.e., every set
  of positive measure contains a subset of finite positive
  measure. Let $X$ be the span of all characteristic functions of
  measurable sets of finite measure in $L_0(\mu)$. It is easy to see
  that $X$ is an order dense sublattice of $L_0(\mu)$, and that it has
  the PP. According to Theorem~\ref{PP-clos}, $\Xru=X^\delta$. It is
  well known that $X^\delta$ is the ideal generated by $X$ in its
  universal completion. Since the universal completion of $X$ is
  $L_0(\mu)$, we conclude that $\Xru$ is the space of all essentially
  bounded functions in $L_0(\mu)$ with support of finite measure.
\end{example}

\begin{question}[\cite{Quinn:75}]
  Assume that $\Xru=X^\delta$ or even $\uadh{X}{1}=X^\delta$. Does
  this imply that $X$ has the PP? Let $X^\sigma$ be the
  intersection of all countably order complete sublattices of
  $X^\delta$ containing $X$. By Theorem~\ref{sPP-clos}, if $X$ has the
  $\sigma$-PP then $\Xru=\uadh{X}{1}=X^\sigma$. Is the
  converse true?  That is, does the $\sigma$-PP follow from
  $\Xru=X^\sigma$ or $\uadh{X}{1}=X^\sigma$?
\end{question}

Let now $X$ be a normed lattice and $\widetilde{X}$ its norm
completion. Being a Banach lattice, $\widetilde{X}$ is uniformly
complete. It follows from Theorem~\ref{Xru-int-Z} that we may view
$\Xru$ as a sublattice of $\widetilde{X}$. Example~\ref{ex:c00} shows
that, in general, the inclusion may be proper. The following theorem
(cf.~\cite{Danilenko:81}) characterizes when the two spaces are equal.

\begin{theorem}\label{ncompl}
  Let $X$ be a normed lattice and $\widetilde{X}$ its norm
  completion. TFAE:
\begin{enumerate}
  \item\label{ncompl-maj} $X$ is majorizing in~$\widetilde{X}$;
  \item\label{ncompl-obdd} Every norm Cauchy sequence in $X$ has an
    order bounded subsequence;
  \item\label{ncompl-inc} Every increasing norm Cauchy sequence in $X$ is
    order bounded;
  \item\label{ncompl-Cauchy} Every norm Cauchy sequence in $X$ has a
    uniformly Cauchy subsequence;
  \item\label{ncompl-1X} Every element of $\widetilde{X}$ is a uniform
    limit in $\widetilde{X}$ of a sequence in $X$ with a regulator in~$X$;
  \item\label{ncompl-ru} $\Xru=\widetilde{X}$.
  \end{enumerate}
\end{theorem}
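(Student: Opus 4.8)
The plan is to prove the chain of equivalences in two pieces: first establish $\text{(i)}\Leftrightarrow\text{(v)}\Leftrightarrow\text{(vi)}$ using the machinery already developed for~$\Xru$, and then close the cycle $\text{(i)}\Rightarrow\text{(iv)}\Rightarrow\text{(ii)}\Rightarrow\text{(iii)}\Rightarrow\text{(i)}$ by elementary manipulations in the Banach lattice~$\widetilde X$. Throughout I will use two standing facts: $\widetilde X$ is uniformly complete (it is a Banach lattice) with closed positive cone, and $\Xru$ may be viewed as a sublattice of~$\widetilde X$ by Theorem~\ref{Xru-int-Z}. Two computational devices recur. The first is the ``geometric series trick'': given $x_n\in X$ with $\norm{x_n-w}\le 4^{-n}$ for some $w\in\widetilde X$, the series $v:=\sum_n 2^n\abs{x_{n+1}-x_n}$ is absolutely convergent, hence converges in~$\widetilde X$, and $\abs{x_n-w}\le 2^{1-n}v$ by telescoping, so $x_n\goesur{v}w$ in~$\widetilde X$. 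The second is that, under the majorizing hypothesis, any regulator (or order bound) obtained in $\widetilde X$ can be dominated by an element of~$X$, thereby transported into~$X$; this is exactly what compensates for the fact that relative uniform convergence in $\widetilde X$ is strictly weaker than in $X$ in general (Example~\ref{ex:c00}).

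\textbf{The equivalences (i), (v), (vi).} For $\text{(i)}\Rightarrow\text{(vi)}$: if $X$ is majorizing in $\widetilde X$, Corollary~\ref{ru-amb} identifies $\Xru$ with the uniform closure of $X$ in~$\widetilde X$, so it suffices to see this closure is all of $\widetilde X$; the geometric series trick shows every $w\in\widetilde X$ is a uniform limit in $\widetilde X$ of a sequence in~$X$. Running the same computation and then replacing the regulator $v$ by a majorant $u\in X_+$ (available since $X$ is majorizing) yields $\text{(i)}\Rightarrow\text{(v)}$. For $\text{(v)}\Rightarrow\text{(i)}$: if $w\in\widetilde X$ is a uniform limit of $(x_n)\subseteq X$ with regulator $u\in X_+$, then $\abs{w}\le\abs{x_1}+u\in X$, so $X$ is majorizing. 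Finally $\text{(vi)}\Rightarrow\text{(i)}$ is immediate from Proposition~\ref{ru-maj}, since $X$ is majorizing in~$\Xru=\widetilde X$.

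\textbf{The cycle among (i)--(iv).} For $\text{(i)}\Rightarrow\text{(iv)}$: a norm Cauchy sequence $(x_n)$ converges to some $w\in\widetilde X$; pass to a subsequence with $\norm{x_{n_k}-w}\le 4^{-k}$; the geometric series trick (applied to differences $x_{n_{k+1}}-x_{n_k}$) shows $(x_{n_k})$ is uniformly Cauchy in $\widetilde X$, and dominating the regulator by some $u\in X_+$ makes it uniformly Cauchy in~$X$. For $\text{(iv)}\Rightarrow\text{(ii)}$: a uniformly Cauchy sequence with regulator $u$ satisfies $\abs{x_{n_k}}\le\abs{x_{n_N}}+u$ for all large $k$, hence is order bounded. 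For $\text{(ii)}\Rightarrow\text{(iii)}$: an increasing norm Cauchy sequence has an order bounded subsequence, and monotonicity forces the whole sequence below that bound. For $\text{(iii)}\Rightarrow\text{(i)}$: given $w\in\widetilde X_+$, pick $x_n\in X$ with $\norm{x_n-w}\le 4^{-n}$ and set $y_k:=\bigvee_{j=1}^k\abs{x_j}$; since $y_{k+1}-y_k=\bigl(\abs{x_{k+1}}-y_k\bigr)^+\le\abs{x_{k+1}-x_k}$, the increasing sequence $(y_k)$ has norm summable increments, hence is norm Cauchy, hence order bounded by some $u\in X_+$ by~(iii); then $x_n\le\abs{x_n}\le y_n\le u$ for every $n$, and since the positive cone of $\widetilde X$ is closed, $w=\lim x_n\le u\in X$, so $X$ is majorizing.

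\textbf{Expected obstacle.} None of the individual steps is deep: the series estimates, the telescoping bound, and the inequalities for the suprema $y_k$ are routine in a Banach lattice. The only genuine care required is bookkeeping — keeping scrupulous track of whether each ``$\goesu$'' or order bound lives in $\widetilde X$ or in $X$, and invoking the majorizing hypothesis at precisely the points where a regulator or order bound must be pulled back into~$X$. I do not anticipate a single hard step, only the need to avoid conflating uniform convergence in the two spaces.
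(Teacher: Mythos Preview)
Your proposal is correct. The underlying ideas are the same as the paper's --- the ``geometric series'' construction $\sum 2^k\abs{x_{k+1}-x_k}$ to pass from norm Cauchy to uniformly Cauchy, and the use of the majorizing hypothesis to pull regulators back into~$X$ --- but the organization differs. The paper runs a single linear cycle $\text{(i)}\Rightarrow\text{(ii)}\Rightarrow\text{(iii)}\Rightarrow\text{(iv)}\Rightarrow\text{(v)}\Rightarrow\text{(vi)}\Rightarrow\text{(i)}$, with the series trick deployed directly in $\text{(iii)}\Rightarrow\text{(iv)}$ (where the partial sums of the series form the increasing norm Cauchy sequence to which~(iii) is applied). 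You instead use~(i) as a hub: you prove $\text{(iii)}\Rightarrow\text{(i)}$ via a different auxiliary increasing sequence $y_k=\bigvee_{j\le k}\abs{x_j}$, and then $\text{(i)}\Rightarrow\text{(iv)}$ by building the series regulator in~$\widetilde X$ and dominating it by an element of~$X$. Your route is slightly longer but perhaps more transparent about where the majorizing hypothesis enters; the paper's direct $\text{(iii)}\Rightarrow\text{(iv)}$ is a bit slicker. One cosmetic point: in your $\text{(v)}\Rightarrow\text{(i)}$ step, the bound $\abs{w}\le\abs{x_1}+u$ tacitly assumes $\abs{x_1-w}\le u$, which need only hold for some tail index~$n_0$ rather than~$n=1$; this is harmless.
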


\begin{proof}
  \eqref{ncompl-maj}$\Rightarrow$\eqref{ncompl-obdd} A norm Cauchy
  sequence in $X$ is norm convergent in~$\widetilde{X}$, hence it has a
  subsequence which is order bounded in $\widetilde{X}$ and, therefore,
  in~$X$.

  The implication \eqref{ncompl-obdd}$\Rightarrow$\eqref{ncompl-inc}
  is straightforward.

  \eqref{ncompl-inc}$\Rightarrow$\eqref{ncompl-Cauchy}
  Suppose that $(x_n)$ is norm Cauchy in $X$. Passing to a subsequence, we
  may assume that $\norm{x_n-x_{n+1}}\le\frac{1}{4^n}$. For each~$m$,
  put
  \begin{math}
    y_m\coloneqq\sum_{k=1}^m2^k\abs{x_{k+1}-x_k}.
  \end{math}
  Clearly, $(y_m)$ is an increasing sequence in $X$; it is also easy
  to see that if $n\ge m$ then
  \begin{displaymath}
    \norm{y_n-y_m}\le\sum_{k=n+1}^m2^k\norm{x_{k+1}-x_k}\le\frac{1}{2^n}.
  \end{displaymath}
  We conclude that $(y_m)$ is norm Cauchy. By assumption, there exists
  $u\in X_+$ such that for every $n$ we have
  \begin{math}
    u\ge y_n\ge 2^n\abs{x_{n+1}-x_n}.
  \end{math}
  It follows that $(x_n)$ is $\norm{\cdot}_u$-Cauchy.

  \eqref{ncompl-Cauchy}$\Rightarrow$\eqref{ncompl-1X} Let
  $y\in\widetilde{X}$. There exists a sequence $(x_n)$ in $X$ which
  converges to $y$ in norm in~$\widetilde{X}$. It follows that $(x_n)$
  is norm Cauchy in~$X$. Using~\eqref{ncompl-Cauchy} and passing to a
  subsequence, we may assume that $(x_n)$ is uniformly Cauchy
  in~$X$. Therefore, $(x_n)$ converges uniformly to some $x$
  in~$\Xru$. Since $X$ is majorizing in $\Xru$, we can choose the
  regulator in~$X$. Since $(x_n)$ converges in norm to~$y$, we
  conclude that $y=x$.

  Implications \eqref{ncompl-1X}$\Rightarrow$\eqref{ncompl-ru}$\Rightarrow$\eqref{ncompl-maj}
  are straightforward.
\end{proof}

Combining Theorem~\ref{ncompl} with Proposition~\ref{ru-maj}, we get
the following corollary, which is somewhat analogous to
Theorem~5.29 in~\cite{Aliprantis:03}:

\begin{corollary}
  Suppose that $X$ is majorizing in~$\widetilde{X}$. Then $X$ is super
  order dense in~$\widetilde{X}$.
\end{corollary}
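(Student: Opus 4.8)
The plan is to obtain the conclusion by chaining the two cited results. First I would record the routine observation that the norm completion $\widetilde{X}$ of a normed lattice is a Banach lattice, hence uniformly complete; consequently Theorem~\ref{Xru-int-Z} lets us regard $\Xru$ as a sublattice of $\widetilde{X}$ that contains~$X$, so that the three spaces sit as $X\subseteq\Xru\subseteq\widetilde{X}$. The hypothesis that $X$ is majorizing in $\widetilde{X}$ is exactly condition~\eqref{ncompl-maj} of Theorem~\ref{ncompl}, and invoking the equivalence of that condition with~\eqref{ncompl-ru} yields $\Xru=\widetilde{X}$.

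Next I would apply Proposition~\ref{ru-maj}, which states that $X$ is super order dense in~$\Xru$: for every $x\in\Xru_+$ there is a sequence $(x_n)$ in $X_+$ with $x_n\uparrow x$ in~$\Xru$. Substituting the identification $\Xru=\widetilde{X}$ obtained in the first step turns this into the assertion that for every $x\in\widetilde{X}_+$ there is a sequence in $X_+$ increasing to $x$ in~$\widetilde{X}$, i.e.\ that $X$ is super order dense in~$\widetilde{X}$. This completes the proof.

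There is essentially no obstacle here; the only subtlety worth flagging is that ``super order dense'' is a notion relative to an ambient lattice, so the argument rests entirely on Theorem~\ref{ncompl} correctly identifying the ambient lattice $\Xru$ of Proposition~\ref{ru-maj} with $\widetilde{X}$ under the majorizing hypothesis. (One could also phrase the second step directly via Proposition~\ref{maj-o-dense}\eqref{maj-o-dense-clo} applied to the uniform closure of $X$ in~$\widetilde{X}$, combined with Corollary~\ref{ru-amb}, but this still requires Theorem~\ref{ncompl} to force that closure to be all of~$\widetilde{X}$, so nothing is gained.)
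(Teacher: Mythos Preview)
Your proposal is correct and follows exactly the route the paper takes: the corollary is stated immediately after Theorem~\ref{ncompl} with the one-line justification ``Combining Theorem~\ref{ncompl} with Proposition~\ref{ru-maj}'', which is precisely your argument of using \eqref{ncompl-maj}$\Rightarrow$\eqref{ncompl-ru} to identify $\Xru$ with~$\widetilde{X}$ and then invoking Proposition~\ref{ru-maj}. Your added remark about the setup via Theorem~\ref{Xru-int-Z} and the parenthetical on the alternative via Proposition~\ref{maj-o-dense}\eqref{maj-o-dense-clo} are accurate but not needed.
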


\begin{question}
  Can one replace ``norm Cauchy'' with ``norm null'' in
  Theorem~\ref{ncompl}\eqref{ncompl-obdd}? (After this paper was
  submitted, it was shown in~\cite{BB} that under the Continuum
  Hypothesis, this question has a negative answer.)
\end{question}

In Example~\ref{ex:finL0}, $\Xru$ is the ideal generated
by $X$ in $L_p(\mu)$. Observe also that $L_p(\mu)$ is the completion
of $X$ under~$\norm{\cdot}_{L_p}$. This proposition motivates the
following question:

\begin{question}
  Characterize those normed lattices $X$ for which $\Xru$ is an ideal
  in~$\widetilde{X}$.
\end{question}

\begin{proposition}
  If $\Xru$ is an ideal in $\widetilde{X}$ then $X$ is order dense
  in~$\widetilde{X}$.
\end{proposition}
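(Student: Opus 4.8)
The plan is to exploit the chain $X\subseteq\Xru\subseteq\widetilde{X}$, where $\Xru$ is regarded as a sublattice of $\widetilde{X}$ via Theorem~\ref{Xru-int-Z}, an identification that fixes~$X$. Since order density is transitive and, by Proposition~\ref{ru-maj}, $X$ is order dense in~$\Xru$, it suffices to show that $\Xru$ is order dense in~$\widetilde{X}$. Observe that $\Xru$ is automatically norm dense in $\widetilde{X}$, because it contains the norm-dense sublattice~$X$. So the statement reduces to the following general fact about Banach lattices: \emph{a norm-dense ideal is order dense}.

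To prove this fact, let $I$ be a norm-dense ideal in a Banach lattice $E$ and let $0<y\in E$. Choose $w\in I$ with $\norm{w-y}<\norm{y}$ and set $z=\abs{w}\wedge y$. Since $I$ is solid, $\abs{w}\in I$, and then $0\le z\le\abs{w}$ forces $z\in I$; also $0\le z\le y$. By the standard lattice and norm inequalities, $\norm{z-y}\le\bignorm{\abs{w}-y}=\bignorm{\abs{w}-\abs{y}}\le\norm{w-y}<\norm{y}$, so $z\ne 0$. Thus $0<z\le y$ with $z\in I$, which proves that $I$ is order dense in~$E$. Applying this with $E=\widetilde{X}$ and $I=\Xru$, which is a norm-dense ideal of the Banach lattice $\widetilde{X}$ by hypothesis, shows that $\Xru$ is order dense in~$\widetilde{X}$.

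To conclude: given $0<y\in\widetilde{X}$, order density of $\Xru$ in $\widetilde{X}$ produces $z\in\Xru$ with $0<z\le y$, and Proposition~\ref{ru-maj} then produces $x\in X$ with $0<x\le z\le y$. Hence $X$ is order dense in~$\widetilde{X}$. The only step carrying any content is the norm-dense-ideal lemma, and even that is a one-line computation with lattice operations; everything else is bookkeeping with the tower $X\subseteq\Xru\subseteq\widetilde{X}$ and transitivity of order density. The one point worth checking carefully is that the hypothesis ``$\Xru$ is an ideal in $\widetilde{X}$'' is meant with respect to the embedding of Theorem~\ref{Xru-int-Z}; this is clearly the intended reading, and it is precisely this ideal property that lets $\abs{w}\wedge y$ stay inside $\Xru$, so the hypothesis is used in an essential way.
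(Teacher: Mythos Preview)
Your proof is correct. The overall architecture---working along the chain $X\subseteq\Xru\subseteq\widetilde{X}$ and proving order density at each step---matches the paper's, but the key step is handled differently. The paper argues via \emph{regularity}: $X$ is order dense in $\Xru$, hence regular there; $\Xru$ is an ideal in $\widetilde{X}$, hence regular there; regularity is transitive, so $X$ is regular in $\widetilde{X}$; and then \cite[Theorem~5.29]{Aliprantis:03} is invoked to pass from regular (and norm dense) to order dense. You instead prove directly, by a short lattice-norm computation, that a norm-dense ideal in a Banach lattice is order dense, and then use transitivity of order density. Your route is more self-contained---it avoids the external reference and the detour through regularity---while the paper's route situates the result within the broader regularity framework used elsewhere in the article. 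Both are clean; yours is the more elementary of the two.
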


\begin{proof}
  Being order dense in~$\Xru$, $X$ is regular in~$\Xru$. Being an
  ideal in~$\widetilde{X}$, $\Xru$ is regular
  in~$\widetilde{X}$. Combining these two facts, we conclude that $X$
  is regular in~$\widetilde{X}$. By
  \cite[Theorem~5.29]{Aliprantis:03}, $X$ is order dense
  in~$\widetilde{X}$.
\end{proof}

\section{Ball-Hager's example.}

\subsection*{A sublattice whose ru-adherence fails to be ru-closed.}

As observed in Example~\ref{ex:adh-non-cl}, the uniform adherence of a
set need not be uniformly closed. It has been a long-standing open
question whether the same is true for sublattices, see,
e.g.,~\cite[p.~239]{Quinn:75}. That is, if $Y$ is a sublattice of a
vector lattice~$X$, do we have $\uadh{Y}{1}=\ucl{Y}$?
In~\cite{BallHager}, the authors provide a counterexample to this
conjecture. We present a simplified version of their example. It is an
example of a vector lattice $X$ such that $\uadh{X}{1}\ne\ucl{X}$
in~$\Xru$ (and, therefore, in~$X^\delta$).

Let $P=[0,1]\setminus\mathbb Q$. By identifying every function in
$C[0,1]$ with its restriction to~$P$, we may view $C[0,1]$ as a
sublattice of $C(P)$. Note that $C(P)$ is uniformly complete by
Example~\ref{COmega-ucompl}.  For every $r\in[0,1]\cap\mathbb Q$, we define a
continuous function $f_r\colon[0,1]\to[0,\infty]$ via
$f_r(t)=\frac{1}{\abs{t-r}}$. Again, identifying $f_r$ with its
restriction to~$P$, we may view it as an element of $C(P)$. Let
$X\subseteq C(P)$ be defined as follows:
\begin{displaymath}
  X=C[0,1]+\Span\bigl\{f_r\mid r\in[0,1]\cap \mathbb Q\bigr\}.
\end{displaymath}
Every $f\in X$ admits unique expansion of the form
$f=v+\alpha_1f_{r_1}+\dots+\alpha_nf_{r_n}$, where $v\in C[0,1]$,
$r_1,\dots,r_n$ are distinct points in $[0,1]\cap\mathbb Q$, and
$\alpha_1,\dots,\alpha_n\in\mathbb R\setminus\{0\}$. We call
$r_1,\dots,r_n$ the singularities of~$f$.

\begin{lemma}
  $X$ is a sublattice of $C(P)$.
\end{lemma}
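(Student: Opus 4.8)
The plan is to show that $X$ is closed under the lattice operations, for which it suffices to check closure under $f \mapsto f^+$ (or equivalently $f \vee g = f + (g-f)^+$). Fix $f \in X$ with its unique expansion $f = v + \sum_{i=1}^n \alpha_i f_{r_i}$, where $v \in C[0,1]$, the $r_i$ are distinct rationals in $[0,1]$, and $\alpha_i \neq 0$. I want to produce an explicit element of $X$ that equals $f^+$ as a function on $P = [0,1]\setminus\mathbb Q$. The key geometric observation is that near each singularity $r_i$, the function $f$ is dominated by the single term $\alpha_i f_{r_i}$: since $f_{r_i}(t) = 1/|t-r_i| \to \infty$ as $t \to r_i$ while $v$ stays bounded and every $f_{r_j}$ with $j \neq i$ stays bounded near $r_i$, there is a small interval $U_i \ni r_i$ on which the sign of $f(t)$ (for $t \in U_i \cap P$) equals the sign of $\alpha_i$, and moreover $f = f^+$ or $f = -f^+$ identically on $U_i \cap P$ depending on that sign.

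Next I would patch these local descriptions together. Choose pairwise disjoint open intervals $U_1,\dots,U_n$ with $r_i \in U_i$ as above, and let $L = [0,1] \setminus \bigcup_i U_i$, a compact set on which $f$ is continuous and bounded (indeed $f_{|L}$ extends to a continuous function on a neighborhood, being a sum of functions continuous away from the $r_i$). On $L$ the positive part $f^+_{|L}$ is a continuous bounded function; the task is to realize the global $f^+$ as a member of $X$, i.e.\ as $w + \sum \beta_i f_{r_i}$ for suitable $w \in C[0,1]$ and $\beta_i \in \mathbb R$. The natural guess is $\beta_i = \alpha_i$ if $\alpha_i > 0$ and $\beta_i = 0$ if $\alpha_i < 0$ — since on $U_i \cap P$ we need $f^+$ to behave like the unbounded term $\alpha_i f_{r_i}$ exactly when $\alpha_i>0$, and to be bounded there when $\alpha_i<0$. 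Setting $g := \sum_i \beta_i f_{r_i} \in X$, I would then argue that $f^+ - g$, a priori defined on $P$, is bounded on $P$ and extends continuously to all of $[0,1]$: near $r_i$ with $\alpha_i>0$ we have $f^+ = f = v + \sum\alpha_j f_{r_j}$ and $g$ cancels the divergent $\alpha_i f_{r_i}$, leaving something with only the (bounded near $r_i$) remaining terms; near $r_i$ with $\alpha_i<0$ we have $f^+=0$ and $g$ has no term at $r_i$, so $f^+-g = -g$ is bounded near $r_i$; and since $P$ is dense in $[0,1]$ and $f^+-g$ is uniformly continuous on $P\cap$(any such punctured neighborhood), it extends across each $r_i$. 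Hence $w := f^+ - g$ (extended) lies in $C[0,1]$, giving $f^+ = w + g \in X$.

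The main obstacle I anticipate is the bookkeeping at the singularities where $\alpha_i < 0$: there $f^+$ is identically $0$ on a neighborhood of $r_i$ intersected with $P$, but I must be careful that the candidate $w = f^+ - g = -\sum_{j}\beta_j f_{r_j}$ restricted near $r_i$ really is continuous at $r_i$ — it is, because $\beta_i = 0$ precisely in this case, so no term of $g$ blows up at $r_i$. Dually, at an $r_i$ with $\alpha_i > 0$ I must check that the cancellation $f^+ - g$ removes \emph{exactly} the bad term and nothing spurious is introduced; this is where uniqueness of the expansion is used to pin down the coefficients. A secondary point of care is verifying that the intervals $U_i$ can be chosen so that on each $U_i \cap P$ the sign of $f$ is genuinely constant and equal to $\mathrm{sign}(\alpha_i)$; this follows from the divergence estimate $|\alpha_i| f_{r_i}(t) - |v(t)| - \sum_{j\neq i}|\alpha_j| f_{r_j}(t) \to +\infty$ as $t \to r_i$, but should be stated explicitly. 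Once $f^+\in X$ for all $f\in X$, linearity of $X$ gives $f\vee g, f\wedge g, |f|\in X$ for all $f,g\in X$, so $X$ is a sublattice of $C(P)$.
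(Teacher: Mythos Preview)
Your proof is correct and follows essentially the same approach as the paper: separate the singularities by the sign of their coefficients, observe that the sign of $f$ near each $r_i$ is determined by $\mathrm{sign}(\alpha_i)$, and subtract off the appropriate singular part to leave a function that extends continuously to $[0,1]$. The paper carries this out for $|f|$ (writing $|f|=u+g+h$ with $g$ collecting the positive-coefficient terms and $h$ the absolute values of the negative ones), whereas you do it for $f^+$ (writing $f^+=w+g$ with $g$ collecting only the positive-coefficient terms); the two are equivalent and the verification is the same.
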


\begin{proof}
  Let $f\in X$. We can write $f=v+g-h$, where $v\in C[0,1]$,
  $g=\alpha_1f_{r_1}+\dots+\alpha_nf_{r_n}$,
  $h=\beta_1f_{s_1}+\dots+\beta_mf_{s_m}$,
  $r_1,\dots,r_n,s_1,\dots,s_m$ are distinct points in
  $[0,1]\cap\mathbb Q$, and $\alpha_1,\dots,\alpha_n$,
  $\beta_1,\dots,\beta_m>0$. We view $f$ as a continuous function from
  $[0,1]$ to the extended real line~$\overline{\mathbb R}$; let
  $\abs{f}$ be the point-wise modulus of~$f$. Note that $f$ agrees
  with $\abs{f}$ on a neighbourhood of each~$r_k$, and with $-f$ on a
  neighborhood of each~$s_k$. Put
  \begin{displaymath}
    u=
    \begin{cases}
      \abs{f}-g-h
      &\mbox{on }[0,1]\setminus\{r_1,\dots,r_n,s_1,\dots,s_m\}\\
      v-2h
      &\mbox{on }\{r_1,\dots,r_n\}\mbox{ and}\\
      -v-2g
      &\mbox{on }\{s_1,\dots,s_m\}.
    \end{cases}
  \end{displaymath}
  It can be easily verified that $u\in C[0,1]$. On~$P$, we have
  $u+g+h=\abs{f}$; this yields $\abs{f}\in X$.
\end{proof}

It now follows from Theorem~\ref{Xru-int-Z} that $\Xru$ may be viewed
as a sublattice of $C(P)$. As before, we write $\uadh{X}{1}$ and
$\uadh{X}{2}$ for the first and the second ru-adherences of $X$
in~$\Xru$. It suffices to show that $\uadh{X}{1}\ne\uadh{X}{2}$.

\begin{lemma}\label{fin-disc}
  Every $f\in \uadh{X}{1}$ is the restriction to $P$ of a function on $[0,1]$
  with only finitely many discontinuities.
\end{lemma}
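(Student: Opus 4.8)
The plan is: given $f\in\uadh{X}{1}$, to single out a finite set $F\subseteq[0,1]\cap\mathbb Q$ such that $f$ extends continuously across every point of $[0,1]\setminus F$, and then to define the required function on $[0,1]$ by gluing these local extensions and assigning it the value $0$ (say) on $F$.

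First I would fix a sequence $(x_m)$ in $X$ with $x_m\goesu f$ in $\Xru$. Since $X$ is majorizing in $\Xru$ (Proposition~\ref{ru-maj}), the regulator may be taken in $X_+$, so after passing to a subsequence we may assume $\abs{x_m-f}\le\tfrac1m e$ on $P$ for some $e\in X_+$. Let $S$ be the (finite) set of singularities of $e$, let $R_1$ be the (finite) set of singularities of $x_1$, and put $F=S\cup R_1$; this is a finite subset of $[0,1]\cap\mathbb Q$.

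The crucial step — and the one I expect to be the main obstacle, since a priori the approximants $x_m$ might carry arbitrarily many singularities scattered through $[0,1]$ — is to show that \emph{no $x_m$ has a singularity outside $F$}. Suppose $x_m$ has a singularity at some $r\notin F$; in particular $r\notin S$, so $e$ is continuous, hence bounded by some $M$, on a closed neighbourhood $I\subseteq[0,1]$ of $r$ with $I\cap S=\emptyset$. On $I\cap P$ we then have $\abs{x_1-x_m}\le(1+\tfrac1m)e\le 2M$. On the other hand, writing $x_m=v+\sum_k\alpha_kf_{r_k}$ in its normal form, the summand with $r_k=r$ contributes $\alpha f_r$ with $\alpha\ne0$, while every other summand and the $C[0,1]$-part is bounded near $r$; since $f_r(t)=1/\abs{t-r}\to\infty$ as $t\to r$ along $P$, we get $\abs{x_m(t)}\to\infty$, and therefore $\abs{x_1(t)}\to\infty$ as well. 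But an element of $X$ is bounded near any point that is not one of its singularities, so $r$ must be a singularity of $x_1$, i.e. $r\in R_1\subseteq F$ — a contradiction.

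Granting this, the rest is routine. Fix $t_0\in[0,1]\setminus F$ and choose a closed subinterval $[a,b]\subseteq[0,1]$ with $t_0\in[a,b]$ and $[a,b]\cap F=\emptyset$. Then $e$ restricts to a function in $C[a,b]$, say with $\max_{[a,b]}e=M$, and, by the previous paragraph, no $x_m$ has a singularity in $[a,b]$, so each $x_m$ restricts to a genuine element of $C[a,b]$. From $\abs{x_m-x_n}\le(\tfrac1m+\tfrac1n)e\le(\tfrac1m+\tfrac1n)M$ on the dense subset $[a,b]\cap P$ together with continuity on $[a,b]$, the sequence $(x_m|_{[a,b]})$ is uniformly Cauchy, hence converges in $C[a,b]$ to a continuous function that agrees with $f$ on $[a,b]\cap P$. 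Such local continuous extensions of $f$ agree on overlaps, since they coincide with $f$ on the relevant (dense) subsets of $P$ and are continuous, so they glue to a function that is continuous on all of $[0,1]\setminus F$; extending it by $0$ on $F$ produces a function on $[0,1]$ whose restriction to $P$ is $f$ (note $F\subseteq\mathbb Q$ is disjoint from $P$) and which has at most $\abs{F}$ points of discontinuity.
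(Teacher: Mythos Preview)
Your proof is correct and follows essentially the same route as the paper's: isolate the finite set $F$ consisting of the singularities of the regulator together with those of the first approximant, argue that no later approximant can acquire a new singularity outside $F$ (since the difference with $x_1$ is dominated by the bounded regulator there), and then pass to a continuous limit on $[0,1]\setminus F$. The only cosmetic difference is that the paper phrases the last step via the completeness of the principal ideal $I_{h'}$ in $C\bigl([0,1]\setminus F\bigr)$ under $\norm{\cdot}_{h'}$, whereas you work on compact subintervals with the sup norm; these are equivalent here.
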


\begin{proof}
  There is a sequence $(g_n)$ in $X$ such that $g_n\goesu f$
  in~$\Xru$, with a regulator $h\in\Xru_+$. Since $X$ is majorizing
  in~$\Xru$, we may assume that $h\in X$. Let $F_1$ be the set of the
  singularities of~$h$. Passing to a tail, we may also assume that
  $\abs{g_n-g_1}\le h$ for all~$n$. It follows that all $g_n$'s have
  the same set of singularities (namely, those of~$g_1$)
  outside~$F_1$; denote it~$F_2$. Put $F=F_1\cup F_2$. Then $h$ and
  all $g_n$'s extend continuously to $[0,1]\setminus F$; let us denote
  the extensions by $h'$ and~$g_n'$.

  For every $\varepsilon>0$ there exists $n_0$ such that for all
  $m,n\ge n_0$ we have $\abs{g_n-g_m}<\varepsilon h$. It follows that
  $\abs{g'_n-g'_m}<\varepsilon h'$ in $C\bigl([0,1]\setminus
  F\bigr)$. Since the ideal $I_{h'}$ in
  $C\bigl([0,1]\setminus F\bigr)$ is complete with respect to
  $\norm{\cdot}_{h'}$, there exists
  $f'\in C\bigl([0,1]\setminus F\bigr)$ such that $g'_n\goesu f'$; we
  can still use $h'$ as a regulator. It is now clear that $f$ is the
  restriction of $f'$ to~$P$.
\end{proof}

\begin{lemma}\label{char}
  Let $g$ be a bounded function on $[0,1]$, which is continuous except
  at finitely many rational points. Then $g_{|P}\in \uadh{X}{1}$.
\end{lemma}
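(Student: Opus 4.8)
The goal is to show that the restriction $g_{|P}$ of a bounded function $g$ on $[0,1]$ whose only discontinuities $s_1,\dots,s_k$ are rational belongs to $\uadh{X}{1}$, the uniform adherence of $X$ in $\Xru$. The strategy is to realize $g_{|P}$ as a uniform limit in $C(P)$ of a sequence in $C[0,1]\subseteq X$ with a regulator lying in $X$, and then to invoke the machinery of Section~7: by Proposition~\ref{loc-adh}, such a limit lies in the uniform adherence of $X$ in the ideal $I(X)$ it generates in $C(P)$, and (by Proposition~\ref{ru-IX} and the discussion following it) this adherence is precisely $\Xruu=\uadh{X}{1}$.

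The regulator I would use is $h\coloneqq\one+\sum_{i=1}^{k}f_{s_i}$, which is a positive element of $X$. Its decisive feature is that $h(t)\ge\frac{1}{\abs{t-s_i}}$ for $t\ne s_i$, so $h$ blows up at every singularity of $g$; this is what makes the approximation almost free near the bad points, where an arbitrary bounded discrepancy is tolerated.

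For the construction, put $C\coloneqq\sup_{[0,1]}\abs g$ and choose $N$ so large that the open intervals $(s_i-\tfrac1N,s_i+\tfrac1N)$, $i=1,\dots,k$, are pairwise disjoint. For $n\ge N$ let $U_n=\bigcup_{i=1}^{k}(s_i-\tfrac1n,s_i+\tfrac1n)$; then $[0,1]\setminus U_n$ is compact and contains none of the $s_i$, so $g$ is continuous on it. Define $g_n$ to agree with $g$ on $[0,1]\setminus U_n$, and on each connected component of $U_n\cap[0,1]$ to be the affine interpolation of the values of $g$ at the endpoints of that component (a constant extension if the component meets $0$ or $1$). The endpoints of these components lie in $[0,1]\setminus U_n$, so a standard pasting argument gives $g_n\in C[0,1]\subseteq X$; and since each value $g_n(t)$ is a convex combination of values of $g$, we have $\abs{g_n}\le C$ on $[0,1]$.

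It remains to estimate $\abs{g_n-g}$ on $P$; set $\varepsilon_n\coloneqq 2C/\sqrt n\to 0$. If $t\in P\setminus U_n$ then $g_n(t)=g(t)$, so $\abs{g_n(t)-g(t)}=0\le\varepsilon_n h(t)$. If $t\in P$ satisfies $\abs{t-s_i}<\tfrac1n$ for some $i$, then $t\ne s_i$ gives $h(t)\ge\frac{1}{\abs{t-s_i}}>n$, hence $\varepsilon_n h(t)>2C\sqrt n\ge 2C\ge\abs{g_n(t)-g(t)}$ for $n\ge 1$. Thus $\abs{g_n-g_{|P}}\le\varepsilon_n h$ on $P$ with $\varepsilon_n\to 0$ and $h\in X_+$, i.e.\ $g_n\goesur{h}g_{|P}$ in $C(P)$; the reduction of the first paragraph then gives $g_{|P}\in\uadh{X}{1}$. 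I do not anticipate a genuine obstacle: the only points requiring care are the continuity and the uniform bound of $g_n$ across the finitely many interpolation windows, and the (formal) fact --- established in Section~7 --- that the uniform adherence of $X$ ``with regulators in $X$'' is independent of the ambient space and coincides with $\uadh{X}{1}$.
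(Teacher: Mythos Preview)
Your proof is correct and follows essentially the same approach as the paper: approximate $g$ by continuous functions that agree with $g$ away from small neighborhoods of the singularities, and use the $f_{s_i}$'s (which blow up there) as regulator to absorb the bounded discrepancy. The only cosmetic differences are that the paper first reduces to a single discontinuity and uses the sharper rate $\varepsilon_n=2/n$ with regulator $f_r$, whereas you treat all singularities at once with $h=\one+\sum_i f_{s_i}$ and the (unnecessarily weak but perfectly adequate) $\varepsilon_n=2C/\sqrt n$.
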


\begin{proof}
  It is easy to see that $g$ may be expressed as a sum of finitely
  many bounded functions such that each
  of them is continuous except at one rational point. Hence, without
  loss of generality, $g$ has a single discontinuity at some
  $r\in[0,1]\cap\mathbb Q$. We may also assume that
  $\norm{g}_\infty\le 1$. For every~$n$, find $g_n\in C[0,1]$ such
  that $g_n$ agrees with $g$ outside $(r-\frac1n,r+\frac1n)$ and
  $\norm{g_n}_\infty\le 1$. Then
  \begin{math}
    \abs{g_n-g}\le 2\one_{[r-\frac1n,r+\frac1n]}\le\frac2n f_r.
  \end{math}
  It follows that $g_n\goesur{f_r}g$, hence $g\in \uadh{X}{1}$.
\end{proof}

\begin{lemma}
  There exists $g\in \uadh{X}{2}\setminus \uadh{X}{1}$.
\end{lemma}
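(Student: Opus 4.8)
The plan is to build $g$ explicitly as an infinite sum of ``unit jumps'', one at each rational in $[0,1]$, with rapidly decaying coefficients chosen so that the partial sums converge relatively uniformly in $\Xru$ with the single regulator~$\one$. Concretely, fix an enumeration $r_1,r_2,\dots$ of $[0,1]\cap\mathbb Q$ and set $s_k=\one_{(r_k,1]}$, a bounded function on $[0,1]$ that is continuous everywhere except for a jump of size~$1$ at~$r_k$. Put $g_n=\sum_{k=1}^n 2^{-k}s_k$ and let $g=\sum_{k=1}^\infty 2^{-k}s_k$, the latter converging pointwise on $[0,1]$; since each $s_k$ is continuous at every irrational point, $g$ is continuous on $P$, so $g_{|P}\in C(P)$. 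The first observation is that each $g_{n|P}$ lies in $\uadh{X}{1}$: indeed $g_n$ is bounded and continuous except at the finitely many rational points $r_1,\dots,r_n$, so Lemma~\ref{char} applies.

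Next I would check that $g_{|P}\in\uadh{X}{2}$. For $m\ge n$ we have $\bigabs{g_{n|P}-g_{m|P}}\le 2^{-n}\one$ on~$P$, so $(g_{n|P})$ is uniformly Cauchy in~$\Xru$ with regulator $\one\in C[0,1]\subseteq\Xru$. Since $\Xru$ is uniformly complete, $g_{n|P}\goesu x$ in~$\Xru$ for some~$x$; as relative uniform convergence forces uniform, hence pointwise, convergence on~$P$, and $g_n\to g$ pointwise on $[0,1]$, we get $x=g_{|P}$. Thus $g_{|P}\in\Xru$, and being a uniform limit of the sequence $(g_{n|P})$ drawn from $\uadh{X}{1}$, it lies in~$\uadh{X}{2}$.

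The heart of the argument is showing $g_{|P}\notin\uadh{X}{1}$. By Lemma~\ref{fin-disc} it suffices to verify that $g_{|P}$ is not the restriction to $P$ of any function on $[0,1]$ with only finitely many discontinuities. The key point is that $g$ has a genuine jump at every~$r_k$: write $g=\phi_k+2^{-k}s_k$ with $\phi_k=\sum_{j\ne k}2^{-j}s_j$; the partial sums of $\phi_k$ are each continuous at $r_k$ (they are finite sums of the $s_j$ with $j\ne k$) and converge to $\phi_k$ uniformly on $[0,1]$, so $\phi_k$ is continuous at~$r_k$; since $s_k$ has left limit $0$ and right limit $1$ at $r_k$, the function $g$ has one-sided limits at $r_k$ differing by $2^{-k}\ne 0$. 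Because $P$ is dense, any function $\widetilde g$ on $[0,1]$ with $\widetilde g_{|P}=g_{|P}$ has these same one-sided limits at each $r_k$ and is therefore discontinuous there; that is infinitely many discontinuities, contradicting Lemma~\ref{fin-disc}. Hence $g_{|P}\in\uadh{X}{2}\setminus\uadh{X}{1}$, which is what we want.

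I expect the last step to be the main obstacle: one must be sure that the jumps across the rationals genuinely survive in the limit $g$ and cannot be erased by any choice of values at the rationals, and that no extension to $[0,1]$ can hide these infinitely many discontinuities. This is precisely where the rapid decay of the coefficients is used — it forces uniform convergence of the tails $\phi_k$, hence continuity of $\phi_k$ at $r_k$, which isolates the jump contributed by the single term $2^{-k}s_k$.
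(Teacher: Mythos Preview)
Your proof is correct and follows the same overall strategy as the paper: produce $g_n\in\uadh{X}{1}$ via Lemma~\ref{char}, pass to a relative-uniform limit~$g$, and use Lemma~\ref{fin-disc} to exclude $g$ from~$\uadh{X}{1}$. The difference lies in the concrete example. The paper takes $g$ to be the characteristic function of $\bigcup_{n\ge 1}\bigl[\tfrac{1}{2n+1},\tfrac{1}{2n}\bigr]\cap P$, so all the discontinuities accumulate at~$0$ and the convergence $g_m\goesu g$ uses the unbounded regulator~$f_0\in X$. You instead spread jumps over every rational with geometrically decaying weights, which makes the convergence uniform in sup-norm and lets you use~$\one$ as the regulator. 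Your choice is arguably tidier in that it never leaves the bounded world (no $f_r$'s are needed for the convergence step), while the paper's version keeps the action localized near a single point and makes the application of Lemma~\ref{fin-disc} immediate without the auxiliary decomposition $g=\phi_k+2^{-k}s_k$. One small phrasing point: an arbitrary extension $\widetilde g$ of $g_{|P}$ need not have one-sided limits at~$r_k$ (you may assign wild values at nearby rationals), but your argument still goes through because the one-sided limits \emph{along~$P$} exist and differ, which already forces discontinuity at~$r_k$.
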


\begin{proof}
  Let $A_n=[\frac{1}{2n+1},\frac{1}{2n}]\cap P$. Let $g_m$ be
  the characteristic function of $\bigcup_{n=1}^mA_n$ for every~$m$;
  let $g$ be the characteristic function of
  $\bigcup_{n=1}^\infty A_n$. Then $g_m\in \uadh{X}{1}$ for every $m$ by
  Lemma~\ref{char}, while $g\notin \uadh{X}{1}$ by Lemma~\ref{fin-disc}. It is
  easy to see, however, that $g_m\goesur{f_0} g$, so that $g\in \uadh{X}{2}$.
\end{proof}

This completes the proof of the example.

\subsection*{A uniformly closed sublattice which is not a kernel of an order
  bounded operator}
The preceding example can be further developed to
answer in the negative Question~4.8 in~\cite{Bilokopytov:24T}: Is
every uniformly closed sublattice the kernel of an order bounded
operator. The following construction is based on~\cite{Ball:99}.

Let $(r_n)$ be an enumeration of $\mathbb Q\cap[0,1]$. For each
$t\in P$, define
\begin{displaymath}
  g(t)=\sum_{n=1}^\infty\frac{1}{2^n}\sin\frac{1}{\abs{t-r_n}}.
\end{displaymath}
It is easy to see that this series converges uniformly in $C(P)$ with
regulator~$\one$. By Lemma~\ref{char}, partial sums of this series are
in $\uadh{X}{1}$; it follows that $g\in\uadh{X}{2}$. It is easy to see
that $g$ cannot be extended to a continuous function on $[0,1]$ or
even on any open subinterval of $[0,1]$. By Lemma~\ref{fin-disc}, we
have $g\notin \uadh{X}{1}$. It also follows that if $h$ is a function
on $[0,1]$ with only finitely many discontinuities then it cannot
agree with $g$ on any non-empty open set. Let $Y$ be the sublattice of
$C(P)$ generated by $X$ and~$g$.

\begin{lemma}\label{fin-disc-X}
  If $h\in Y$ is a restriction to $P$ of a function on $[0,1]$
  with finitely many discontinuities then $h\in X$.
\end{lemma}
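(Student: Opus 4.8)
The plan is to analyze the structure of an arbitrary element of $Y$ and show that the only way to obtain a function with finitely many discontinuities is to avoid using $g$ altogether. First I would describe $Y$ concretely: since $Y$ is the sublattice of $C(P)$ generated by $X$ and $g$, every element of $Y$ can be built from $X\cup\{g\}$ using finitely many lattice and linear operations. Because $X$ is itself a sublattice (and in fact an $f$-algebra-free vector lattice containing $C[0,1]$), it suffices to understand how $g$ interacts with $X$ under these operations. The key local observation is that $g$ has a ``wild'' oscillation at \emph{every} rational point $r_n\in[0,1]$: near $r_n$, the function $g$ is a continuous perturbation of $\tfrac{1}{2^n}\sin\tfrac1{|t-r_n|}$, which oscillates between values bounded away from each other on every punctured neighbourhood of $r_n$. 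In particular, $g$ does not extend continuously to any open subinterval, and more importantly, no lattice-linear combination $\lambda g + (\text{element of }X)$, with $\lambda\ne 0$, can be tamed: the $\sin\tfrac1{|t-r_n|}$ oscillation at $r_n$ cannot be cancelled by any element of $X$, since elements of $X$ have only pole-type singularities $f_r$ (which tend monotonically to $+\infty$) plus a genuinely continuous part.

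The key steps, in order, would be: (1) Show that every $h\in Y$ can be written as $h = w \pm g$ or $h\in X$, after decomposing via lattice operations — more precisely, I would argue that the ``$g$-coefficient'' is well-defined locally. The cleanest route is: for each $t\in P$ near a rational $r_n$, look at the cluster set (limit points) of $h$ as we approach $r_n$ through $P$; if $h$ involves $g$ with a nonzero coefficient near $r_n$, this cluster set contains a nondegenerate interval, whereas if $h\in X$ it is a single point (or $\pm\infty$ from the pole $f_{r_n}$). (2) Establish by induction on the lattice-word-length of $h$ that near each rational point either $h$ locally equals an element of $X$ (cluster set a point or $\pm\infty$) or $h$ locally has a nondegenerate cluster set and hence a genuine oscillatory discontinuity. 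Lattice operations $\vee,\wedge$ preserve this dichotomy: $\max$ or $\min$ of two functions, one of which oscillates in an interval $[a,b]$ with $b>a$, still oscillates unless the other dominates from one side, but domination is impossible because $X$-functions near $r_n$ are either bounded-continuous or $\to+\infty$ along \emph{all} sequences, not selectively. (3) Conclude: if $h\in Y$ has only finitely many discontinuities on $[0,1]$, then near all but finitely many rationals $r_n$, $h$ locally coincides with an $X$-function; but in fact by (2), $h$ involves $g$ nontrivially near $r_n$ for \emph{no} $n$ (since that would force a discontinuity at $r_n$, and this happens at infinitely many $r_n$ simultaneously once $g$ appears with nonzero coefficient — a single use of $g$ creates infinitely many wild points). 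Therefore $h\in X$.

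The main obstacle I expect is step (2): making rigorous the claim that lattice operations cannot ``cancel'' the oscillation of $g$ at a rational point. The subtlety is that $Y$ is generated by $X$ and $g$ using \emph{arbitrary} finite lattice-linear expressions, so one must handle nested $\vee,\wedge$ carefully. I would handle this by working with the ``extended cluster set'' $C_h(r_n)\subseteq\overline{\mathbb R}$ of the restriction of $h$ to $P\cap(r_n-\delta,r_n+\delta)$ as $\delta\to 0$: one shows $C_{h\vee k}(r_n)$ and $C_{h\wedge k}(r_n)$ are controlled by $C_h(r_n)$ and $C_k(r_n)$, and that for $h\in X$ the set $C_h(r_n)$ is always a singleton in $\overline{\mathbb R}$ (a finite value or $+\infty$), while adding a nonzero multiple of $g$ forces $C_h(r_n)$ to contain a nondegenerate interval — and crucially this property is stable under $\max,\min$ with any $X$-function because an $X$-function near $r_n$ is either bounded-continuous (hence its value at the relevant points lies in a shrinking interval around a single limit) or uniformly large. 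The bookkeeping is routine but needs care to cover the case where the $g$-part is multiplied by a continuous function vanishing at $r_n$; however, near $r_n$ the other rational poles are absent, so any continuous factor is bounded away from $0$ on a punctured neighbourhood unless it is the zero function there, which would return us to the $X$ case.
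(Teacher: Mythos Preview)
Your core intuition—that $g$ produces oscillatory behaviour at every rational which cannot be removed by lattice-linear operations over $X$—is the right one, but the proposal does not close the main gap you yourself identify. The problem is step~(2): the dichotomy ``cluster set is a singleton vs.\ a nondegenerate interval'' is \emph{not} stable under lattice operations in the way you suggest. Your justification treats only the case $(\text{oscillating})\vee(\text{element of }X)$, but in $Y$ one must handle $\bigvee_i(x_i+\alpha_ig)-\bigvee_j(y_j+\beta_jg)$ with several nonzero $\alpha_i,\beta_j$. Near a rational $r_n$ free of poles, this difference is (up to a vanishing error) a function of $g(t)$ alone, namely $F_n(s)=\max_i\bigl(x_i(r_n)+\alpha_is\bigr)-\max_j\bigl(y_j(r_n)+\beta_js\bigr)$, a difference of convex piecewise-linear functions. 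Such an $F_n$ \emph{can} be constant on the oscillation interval of $g$ whenever the active slopes happen to match, so the cluster set of $h$ at $r_n$ may collapse to a point even though $g$ is genuinely present in the expression. Ruling this out for all but finitely many $n$, and then passing from that local information to the \emph{global} conclusion $h\in X$, is exactly the structural work your sketch omits. (Two side remarks: the claim that ``every $h\in Y$ can be written as $h=w\pm g$ or $h\in X$'' is false, and the phrase ``the $g$-part is multiplied by a continuous function'' has no meaning here, since $Y$ is a sublattice, not a subalgebra.)

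The paper's argument is organised quite differently. It writes $h=\bigvee_i(x_i+\alpha_ig)-\bigvee_j(y_j+\beta_jg)$ in an irredundant form, introduces the ``symmetric'' sublattice $Z=\bigl\{f\in C(P): f(r+t_n)-f(r-t_n)\to 0\text{ whenever }t_n\to 0\bigr\}$ to show that any non-pole discontinuity of $h$ would force infinitely many cluster values, and then proves—via an alignment argument on the sets $U_i=\{h_1=x_i+\alpha_ig\}$ and $V_j=\{h_2=y_j+\beta_jg\}$—that $h$ has only finitely many cluster values at any point. Hence every discontinuity of $h$ is a pole; one subtracts a suitable multiple of $f_q$ to remove it and inducts on the \emph{number of discontinuities}, not on word length. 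The alignment step (forcing $n=m$, $\alpha_i=\beta_i$ after relabelling, and $U_i\setminus V_i$ nowhere dense) is precisely what controls the cancellation phenomenon your sketch leaves open.
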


\begin{proof}
  We may view $h$ as a function on $[0,1]$ with finitely many
  discontinuities. Without loss of generality, all discontinuities of
  $h$ are non-removable. We will prove the lemma by
  induction on the number of discontinuities. If $h$ has no
  discontinuities then $h\in X$ by the definition of~$X$.

  Since $h\in Y$ and $Y$ is generated by $X$ and~$g$, one can write
  \begin{displaymath}
    h=h_1-h_2,\quad\mbox{where}\quad
    h_1=\bigvee_{i=1}^n(x_i+\alpha_i g)\quad\mbox{and}\quad
    h_2=\bigvee_{j=1}^m(y_j+\beta_j g)
  \end{displaymath}
  for some $x_1,\dots,x_n,y_1,\dots,y_m$ in $X$ and
  $\alpha_1,\dots,\alpha_n,\beta_1,\dots,\beta_m$ in~$\mathbb
  R$. Without loss of generality, we assume that this decomposition is
  ``optimal'' in the following ways. First, we assume that
  $\alpha_i\ne\alpha_j$ whenever $i\ne j$ because
  $(x_i+\alpha g)\vee(x_j+\alpha g)=x_i\vee x_j+\alpha g$. Similarly,
  we assume that $\beta_i\ne\beta_j$ whenever $i\ne j$. Second, we
  assume that there are no redundant terms in the decomposition in the
  sense that removing any of $x_i+\alpha_i g$ or $y_j+\beta_jg$ terms
  from the decomposition will result in a different function.

  Let $Z$ be the set of all $f$ in $C(P)$ such that for every
  $r\in\mathbb Q\cap(0,1)$ and every sequence $(t_n)$ in
  $\mathbb R\setminus\mathbb Q$ with $t_n\to 0$ we have
  $f(r+t_n)-f(r-t_n)\to 0$. It is easy to see that $Z$ is a linear
  subspace of $C(P)$. It is, actually, a sublattice since $f\in Z$
  implies $\abs{f}\in Z$. Observe that $C[0,1]\subseteq Z$, $f_r\in Z$
  for every $r\in[0,1]\cap\mathbb Q$, and $g\in Z$. It follows that
  $Y\subseteq Z$. Hence, $h\in Z$.

  Let $q$ be a discontinuity of~$h$. We claim that then
  $\lim\limits_{s\to q}h(s)=\pm\infty$. Suppose not. It follows from
  $h\in Z$ that if $h$ has either a left or a right limit at $q$ then
  it has a limit at $q$, which contradicts the assumption that $q$ is
  a non-removable discontinuity. Hence, $h$ cannot have a left or a
  right limit at~$q$. Then $h$ must have at least two limit values,
  say, $a$ and $b$ with $a<b$, as it approaches $q$ from one side,
  say, from the right. Since $h$ has finitely many discontinuities, it
  is continuous on $(q,q+\delta)$ for all sufficiently small
  positive~$\delta$, hence $h(q,q+\delta)$ must contain $(a,b)$. It
  follows that $h$ has infinitely many limit values as it
  approaches~$q$. We will show that this leads to a contradiction.

  For each $i=1,\dots,n$, let
  $U_i=\bigl\{t\in P\mid h_1(t)=(x_i+\alpha_ig)(t)\bigr\}$.  Since
  both functions are in $C(P)$, $U_i$ is closed in~$P$. Clearly,
  $P=\bigcup_{i=1}^nU_i$. By the optimality assumption, $U_i$ is not
  contained in the union of the rest of $U_j$'s because otherwise,
  removing the term $x_i+\alpha_i g$ from the decomposition of $h$
  will result in the same function~$h$. Similarly, for every
  $j=1,\dots,m$, we put
  $V_j=\bigl\{t\in P\mid h_2(t)=(y_j+\beta_jg)(t)\bigr\}$; then $V_j$
  is closed, $P=\bigcup_{j=1}^mV_j$, and $V_j$ is not covered by
  $\bigcup_{i\ne j}V_i$.

  For any $i\ne j$, $x_i+\alpha_ig$ and $x_j+\alpha_jg$ agree on
  $U_i\cap U_j$ because both agree with $h_1$ there. It follows that
  $x_i-x_j=(\alpha_j-\alpha_i)g$ on $U_i\cap U_j$. Since
  $\alpha_j-\alpha_i\ne 0$, we conclude that $U_i\cap U_j$ is nowhere
  dense. So $U_i$'s form a partition of $P$ up to nowhere dense
  overlaps. Same for~$V_j$'s.

  Furthermore, on $U_i\cap V_j$, $h=(x_i+\alpha_ig)-(y_j+\beta_jg)$,
  hence $(\alpha_i-\beta_j)g=h+y_j-x_i$ there. If $\alpha_i\ne\beta_j$
  then $g$ agrees on $U_i\cap V_j$ with the restriction to
  $U_i\cap V_j$ of a function on $[0,1]$ with finitely many
  discontinuities; it follows that $U_i\cap V_j$ is nowhere
  dense. This means that the two partitions are almost aligned: for
  every $i$ there is exactly one $j$ for which $U_i\cap V_j$ is not
  nowhere dense. In particular, $n=m$. After relabelling, we may
  assume that $\alpha_i=\beta_i$ and $U_i\cap V_j$ is nowhere dense
  whenever $i\ne j$. It follows from $U_i\setminus
  V_i\subseteq\bigcup_{j\ne i}(U_i\cap V_j)$ that $U_i\setminus V_i$ is
  nowhere dense. Furthermore,
  \begin{displaymath}
    P=\bigcup_{i=1}^nU_i=\Bigl(\bigcup_{i=1}^n(U_i\cap V_i)\Bigr)
    \cup\Bigl(\bigcup_{i=1}^n(U_i\setminus V_i)\Bigr),
  \end{displaymath}
  so that
  \begin{math}
    P\setminus\Bigl(\bigcup_{i=1}^n(U_i\cap V_i)\Bigr)
    \subseteq\Bigl(\bigcup_{i=1}^n(U_i\setminus V_i)\Bigr).
  \end{math}
  Since the set on the left is open and the set on the right is
  nowhere dense, we conclude that the former set is empty and, therefore,
  \begin{math}
    P=\bigcup_{i=1}^n(U_i\cap V_i).
  \end{math}

  Suppose that $v$ is a (finite) limit value of $h$ at~$q$. Then there
  is a sequence $(s_n)$ in $P$ such that $s_n\to q$ and $h(s_n)\to
  v$. After passing to a subsequence, $(s_n)$ is contained in
  $U_k\cap V_k$ for some~$k$. On $U_k\cap V_k$, $h$ agrees with
  $x_k-y_k$; it follows that $v$ is a limit value of $x_k-y_k$
  at~$q$. Since $x_k-y_k\in X$, the only discontinuities of $x_k-y_k$
  are poles; it follows that $x_k-y_k$ is continuous at $q$ and,
  therefore,
  \begin{math}
    v=\lim_nh(s_n)=\lim_n(x_k-y_k)(s_n)=(x_k-y_k)(q).
  \end{math}
  We conclude that $v$ belongs to
  \begin{math}
    \bigl\{(x_i-y_i)(q)\mid i=1,\dots,k\bigr\}.
  \end{math}
  Hence, $h$ has only finitely many limit values at~$q$, which is a
  contradiction. This completes the proof that
  $\lim\limits_{s\to q}h(s)=\pm\infty$.

  Again, $q$ belongs to the closure of $U_k\cap V_k$ in $[0,1]$ for
  some $k=1,\dots,n$, and $h$ agrees with $x_k-y_k$ on $U_k\cap
  V_k$. It follows that $x_k-y_k$ has a singularity at~$q$. Therefore,
  $x_k-y_k-\gamma f_q$ is continuous at $q$ for some~$\gamma$. Put
  $h'=h-\gamma f_q$. Clearly, $h'\in Y$, $h'$ is discontinuous at all
  other discontinuities of $h$ and continuous where $h$ is continuous.
  We claim that $h'$ is continuous at~$q$. Indeed if $h'$ is
  discontinuous at $q$ then, by the preceding argument,
  $\lim\limits_{s\to q}h'(s)=\pm\infty$.  On the other hand, $h'$
  agrees with $x_k-y_k-\gamma f_q$ on $U_k\cap V_k$, so that
  $\lim_{s\to q}(x_k-y_k-\gamma f_q)(q)$ is a limit value of $h'$
  at~$q$. But this is a finite number because $x_k-y_k-\gamma f_q$ is
  continuous at~$q$. This contradiction proves that $h'$ is continuous
  at~$q$.

  Hence, $h'$ has one fewer discontinuity than~$h$. By the induction
  hypothesis, $h'\in X$. Then $h\in X$.
\end{proof}

\begin{corollary}\label{XYF}
  $X=\uadh{X}{1}\cap Y$.
\end{corollary}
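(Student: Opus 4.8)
The plan is to obtain the corollary as an immediate consequence of the two characterization lemmas already established in this subsection, namely Lemma~\ref{fin-disc} and Lemma~\ref{fin-disc-X}. First I would dispose of the easy inclusion $X\subseteq\uadh{X}{1}\cap Y$: the containment $X\subseteq\uadh{X}{1}$ holds for any set (recall $A\subseteq\uadh{A}{1}$ always), while $X\subseteq Y$ is immediate, since $Y$ is by definition the sublattice of $C(P)$ generated by $X$ together with~$g$.

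For the reverse inclusion $\uadh{X}{1}\cap Y\subseteq X$, I would take an arbitrary $h\in\uadh{X}{1}\cap Y$ and run the two lemmas in sequence. Since $h\in\uadh{X}{1}$, Lemma~\ref{fin-disc} tells us that $h$ is the restriction to $P$ of a function on $[0,1]$ having only finitely many discontinuities. Feeding this back into Lemma~\ref{fin-disc-X}, together with the hypothesis $h\in Y$, yields $h\in X$, which completes the argument.

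Consequently there is essentially no obstacle left at this stage: all the substantive work sits inside Lemma~\ref{fin-disc} (the uniform-adherence regulator argument that pins down a common finite set of singularities for an adherent sequence) and, above all, Lemma~\ref{fin-disc-X} (the delicate ``optimal representation'' analysis showing that the sup-partitions $\{U_i\}$ and $\{V_j\}$ attached to a representation $h=h_1-h_2$ must be almost aligned, forcing any $h\in Y$ with finitely many discontinuities to agree locally with differences of elements of $X$ and hence to lie in~$X$). The only point requiring a moment's care when invoking Lemma~\ref{fin-disc-X} is that its hypothesis ``$h\in Y$ is the restriction of a function on $[0,1]$ with finitely many discontinuities'' is exactly what Lemma~\ref{fin-disc} delivers for elements of $\uadh{X}{1}$, so the two statements interlock cleanly with no extra work.
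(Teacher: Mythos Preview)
Your proposal is correct and follows exactly the paper's approach: the paper's proof reads ``It is obvious that $X\subseteq \uadh{X}{1}\cap Y$. The other inclusion follows from Lemmas~\ref{fin-disc} and~\ref{fin-disc-X},'' which is precisely the two-step argument you outline.
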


\begin{proof}
  It is obvious that $X\subseteq \uadh{X}{1}\cap Y$. The other inclusion follows
  from Lemmas~\ref{fin-disc} and~\ref{fin-disc-X}.
\end{proof}

\begin{corollary}\label{FruXru}
  $X$ is uniformly closed in $Y$ and $\Yru=\Xru$.
\end{corollary}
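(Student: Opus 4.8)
The plan is to first locate $Y$ inside $\Xru$, after which both assertions follow from results already proved. Since the adherences in this subsection are computed inside $\Xru$ (regarded as a sublattice of $C(P)$ via Theorem~\ref{Xru-int-Z}), the element $g$ lies in $\uadh{X}{2}\subseteq\Xru$. As $\Xru$ is a sublattice of $C(P)$ containing both $X$ and $g$, it contains the sublattice of $C(P)$ generated by $X$ and~$g$, namely $Y$. Hence $X\subseteq Y\subseteq\Xru$.

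Given this, I would read off $\Yru=\Xru$ immediately from Corollary~\ref{hier}. For the remaining claim, that $X$ is uniformly closed in $Y$, I would use the fact recalled in Section~2 that it suffices to test against sequences: suppose $(x_n)$ is a sequence in $X$ with $x_n\goesur{v}x$ for some $x\in Y$ and some regulator $v\in Y_+$, so that for each $\varepsilon>0$ eventually $\abs{x_n-x}\le\varepsilon v$ in~$Y$. Because $Y$ is a sublattice of $\Xru$, the modulus $\abs{x_n-x}$ and the order relation are unchanged when read in $\Xru$, hence $x_n\goesur{v}x$ in $\Xru$ as well; thus $x\in\uadh{X}{1}$. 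Combining with $x\in Y$ and invoking Corollary~\ref{XYF} forces $x\in\uadh{X}{1}\cap Y=X$. Therefore $X$ is uniformly closed in~$Y$. (For a general net in $X$ converging uniformly in $Y$, one first extracts a subsequence converging with the same regulator, reducing to the sequential case just treated.)

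I do not anticipate any genuine obstacle: the substantive work of the example has already been done in Lemmas~\ref{fin-disc} and~\ref{fin-disc-X} and distilled into Corollary~\ref{XYF}. The only point requiring care is the consistent interpretation of ``uniform adherence of $X$'' as the adherence taken inside $\Xru$ --- equivalently, since $X$ is majorizing in $\Xru$, inside $C(P)$ --- together with the routine observation that uniform convergence of a sequence from $X$ within the sublattice $Y$ transfers upward to $\Xru$ with the same regulator.
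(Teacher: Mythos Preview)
Your argument is correct and follows the paper's proof essentially verbatim: you establish $X\subseteq Y\subseteq\Xru$ from $g\in\uadh{X}{2}$, invoke Corollary~\ref{hier} for $\Yru=\Xru$, and then deduce uniform closedness by lifting the convergence to $\Xru$ and applying Corollary~\ref{XYF}. The only difference is that you spell out in more detail the routine transfer of uniform convergence from $Y$ to $\Xru$ and the reduction to sequences.
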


\begin{proof}
  It follows from $g\in \uadh{X}{2}$ that
  $X\subseteq Y\subseteq \uadh{X}{2}\subseteq\ucl{X}=\Xru$. By
  Corollary~\ref{hier}, we have $\Yru=\Xru$.  Suppose that $x\in Y$
  and $(x_n)$ in $X$ are such that $x_n\goesu x$ in~$Y$. Then
  $x_n\goesu x$ in~$\Xru$, hence $x\in \uadh{X}{1}$. By
  Corollary~\ref{XYF}, we have $x\in X$.
\end{proof}

We can now complete the example. Let $H$ be an arbitrary
vector lattice; suppose that $T\colon Y\to H$ is an order bounded
operator such that $X\subseteq\ker T$. Consider
$T^{\rm ru}\colon\Yru\to H^{\rm ru}$. By
Corollary~\ref{FruXru}, we may view it as
$T^{\rm ru}\colon X^{\rm ru}\to H^{\rm ru}$. By uniqueness of
extension, we conclude that $T^{\rm ru}=0$ and, therefore, $T=0$.

\begin{question}
  Is every uniformly closed sublattice of a uniformly complete vector lattice
  the kernel of an order bounded operator? Some partial answers to
  this question can be found in~\cite{Hager:15}.
\end{question}

\section*{Acknowledgements.}
The authors would like to thank V.~Bohdanskyi for valuable discussions
about Example~\ref{ex:pos-inj}. We thank M.~Amine Ben Amor for
pointing our attention to~\cite{BallHager}. We also thank Y.~Azouzi for
helpful comments.


\begin{thebibliography}{WWWW}

\bibitem[AS05]{Abramovich:05}
  Y.~Abramovich and G.~Sirotkin,
  On order convergence of nets,
  \emph{Positivity}, 9 (2005), 287--292.

\bibitem[AT17]{Adeeb:03}
  S.~Adeeb and V.G.~Troitsky,
  Locally piecewise affine functions and their order structure,
  \emph{Positivity}, 21(1), (2017), 213--221.

\bibitem[AL84]{Aliprantis:84}
  C.D.~Aliprantis, E.~Langford,
  Order completions of Archimedean Riesz spaces and $l$-groups,
  \emph{Algebra Universalis}, 19 (1984) 151--159.

\bibitem[AB03]{Aliprantis:03}
   C.D.~Aliprantis and O.~Burkinshaw,
   \emph{Locally solid {R}iesz spaces with applications to economics},
   second ed., American Mathematical Society, Providence, RI, 2003.

\bibitem[AB06]{Aliprantis:06}
 C.D.~Aliprantis and O.~Burkinshaw,
 \emph{Positive operators}, 2nd edition, Springer, 2006.

\bibitem[Az26]{Azouzi}
 Y.~Azouzi,
 A counterexample regarding the relatively uniform
 completion of a principal ideal.
 \emph{Positivity} 30, 27, (2026).

\bibitem[BH99]{Ball:99}
  R.N. Ball and A.W. Hager,
  The relative uniform density of the continuous functions in the
  Baire functions, and of a divisible $l$-group in any epicompletion,
  \emph{Topology Appl.}, 97 (1999), 109--126.

\bibitem[BH1]{BallHager}
  R.N. Ball and A.W. Hager,
  An example regarding Veksler’s relatively uniform completion of an
  Archimedean vector lattice, an unpublished manuscript.

\bibitem[BH2]{BallHager4}
  R.N. Ball and A.W. Hager,
  The four uniform completions of a unital Archimedean vector lattice,
  \emph{Algebra Universalis}, 87 (2026), no.~3, paper no.~21, 30~pp.

\bibitem[Bil23]{Bilokopytov:23}
  E.~Bilokopytov,
    Locally solid convergences and order continuity of positive
    operators,
    \emph{J.\ Math.\ Anal.\ Appl.}, 528(1), (2023), 127566.

\bibitem[Bil24]{Bilokopytov:24}
 E.~Bilokopytov,
 Characterizations of the projection bands and some order properties of the lattices of continuous functions
\emph{Positivity}, 28 (2024), no. 3, Paper No. 35.

\bibitem[BT24]{Bilokopytov:24T}
  E.~Bilokopytov and V.G.~Troitsky,
  Uniformly closed sublattices of finite codimension.
  \emph{Linear and Multilinear Algebra}, 72(13), (2024), 2248--2266.

\bibitem[BCTW]{BCTW}
  E.~Bilokopytov, J.~Conradie, V.G.~Troitsky, J.H.~van~der~Walt,
  Locally solid convergence structures.
  To appear in \emph{Dissertationes.\
  Math.}

\bibitem[BB]{BB}
  E.~Bilokopytov and V.~Bohdanskyi,
  Normed lattices majorizing in their norm completions,
  arXiv:2604.09939.
    
\bibitem[Bon74]{Bondarev:74}
  A.S.~Bondarev,
  On certain completions of $K$-lineals.
  Siberian Math.\ J.\ 15, 339--346 (1974).

\bibitem[BS16]{Buskes:16}
  G.~Buskes and C.~Schwanke,
  Functional completions of Archimedean vector lattices,
  \emph{Algebra Univers.} 76 (2016) 53--69.

\bibitem[BvR89]{Buskes:89}
  G.~Buskes and A.~van~Rooij,
  Small Riesz spaces,
  \emph{Math.\ Proc.\ Cambridge Philos.\ Soc.},
  105 (1989), no. 3, 523--536.

\bibitem[CL09]{Cernak:09}
  \v S.~\v Cern\'ak and J.~Lihov\'a,
  On a relative uniform completion of an Archimedean lattice ordered
  group,
  \emph{Math. Slovaca}, 59(2), 2009, 231--250.

\bibitem[Dan81]{Danilenko:81}
  I.F.~Danilenko,
  Topologization of the relatively uniform completion of an ordered vector space
  and some of its applications,
  \emph{Dokl. Akad. Nauk SSSR}, 256 (1981), no. 4, 777--780 (Russian).

\bibitem[dPW15]{dePagter:15}
  B.~de~Pagter and A.W.~Wickstead,
  Free and projective Banach lattices.
  \emph{Proc. Roy. Soc. Edinburgh} Sect. A 145 (2015),
  no. 1, 105--143.

\bibitem[DM82]{Duhoux:82}
  M.~Duhoux and M.~Meyer,
  A new proof of the lattice structure of orthomorphisms.
  \emph{J. London Math. Soc.}, (2) 25 (1982), no. 2, 375--378.

\bibitem[Em23]{Emelyanov:23}
  E.~Emelyanov,
  Relative uniform convergence in vector lattices: odds and ends,
  \emph{J. Math. Sci.} (N.Y.), 271 (2023), no. 6, 733--742.

\bibitem[EG24]{Emelyanov:24}
  E.~Emelyanov and S.~Gorokhova,
  Free uniformly complete vector lattices,
  \emph{Positivity}, 28, 48 (2024).

\bibitem[GXT17]{Gao:17}
   N.~Gao, F.~Xanthos and V.G.~Troitsky,
   Uo-convergence and its applications to Ces\`aro means in Banach
   lattices,
   \emph{Israel J.\ Math.}, 220, 2017, 649--689.

 \bibitem[Hag15]{Hager:15}
   A.W.~Hager,
   The relatively uniform completion, epimorphisms and units,
   in divisible Archimedean $l$-groups,
    \emph{Math.\ Slovaca}, 65(2) (2015) 343--358

  \bibitem[KvG19]{Kalauch:19}
    A.~Kalauch and O.~van~Gaans,
    Relatively uniform convergence in partially ordered vector spaces
    revisited,
    in {\it Positivity and noncommutative analysis},
    269--280, Trends Math., Birkh\"auser/Springer, 2019.

\bibitem[MN91]{Meyer-Nieberg:91}
  P.~Meyer-Nieberg,
  \emph{Banach lattices}, Springer-Verlag, Berlin, 1991.

\bibitem[BTW23]{OBrien:23}
    M.~O'Brien, V.G.~Troitsky, and J.H. van~der~Walt,
    Net convergence structures with applications to vector lattices.
    \emph{Quaestiones Math.}, 46(2), (2023), 243--280.

\bibitem[Qui75]{Quinn:75}
  J.~Quinn,
  Intermediate Riesz spaces,
  \emph{Pacific J.\ Math.}, 56(1), 1975, 225--263.

\bibitem[TT20]{Taylor:20}
  M.A.~Taylor and V.G.~Troitsky,
  Bibasic sequences in Banach lattices,
  \emph{J. of Functional Analysis},
  278(10), 2020, 108448.

\bibitem[TTTO24]{Taylor:24}
  T.~Oikhberg, M.A.~Taylor, P.~Tradacete, and V.G.~Troitsky,
  Free Banach lattices.
  \emph{J. European Math. Soc.}, 2024.

\bibitem[Tri02]{Triki:02}
  A.~Triki,
  On algebra homomorphisms in complex almost $f$-algebras.
  \emph{Comment.\ Math.\ Univ.\ Carolin.}, 43, 23--31 (2002)

\bibitem[Vek69]{Veksler:69}
  A.I.~Veksler,
  A new construction of Dedekind completion of vector lattices and of l-groups with
  division,
  \emph{Sib. Math. J.}, 10, (1969) 891--896. (in Russian)\\

\end{thebibliography}
\end{document}